\def\P{\mathbb P}
\newcommand{\R}{\mathbb R}
\newcommand{\Z}{\mathbb Z}
\newcommand{\Q}{\mathbb Q}
\newcommand{\F}{\mathbb F}
\newcommand{\T}{\mathbb T}
\newcommand{\V}{\mathbb V}
\newcommand{\M}{\mathbb M}
\newcommand{\st}[1]{\vskip 1mm \noindent\makebox[4mm][r]{\bf #1} \hspace{1mm}}
\newcommand{\stst}[1]{\vskip 1mm \noindent\makebox[4mm][r]{\bf #1} \hspace{6mm}}
\newcommand{\ststst}[1]{\vskip 1mm \noindent\makebox[4mm][r]{\bf #1} \hspace{11mm}}
\newcommand{\stststst}[1]{\vskip 1mm \noindent\makebox[4mm][r]{\bf #1} \hspace{16mm}}
\newcommand{\ststststst}[1]{\vskip 1mm \noindent\makebox[4mm][r]{\bf #1} \hspace{21mm}}
\def\op{\operatorname}
\def\as#1{\renewcommand\arraystretch{#1}}
\def\cs{\operatorname{cs}}
\def\dsc{\operatorname{Disc}}
\def\diso{\lower.4ex\hbox{$\downarrow$}\raise.4ex\hbox{\mbox{\scriptsize
$\wr$}}}
\def\ep{\epsilon}
\def\exp{\op{exp}}
\def\ga{\gamma}
\def\gg{\mathcal{G}r}
\def\ggm{\mathcal{G}r(\mu)}
\def\hcs{h_{\cs}}
\def\hm{\op{H}_\mu}
\def\imp{\,\Longrightarrow\,}
\def\ind{\op{ind}}
\def\iso{\ \lower.3ex\hbox{\as{.08}$\begin{array}{c}\lra\\\mbox{\tiny $\sim\,$}\end{array}$}\ }
\def\kb{\overline{K}_v}
\def\kp{\op{KP}}
\def\kpm{\op{KP}(\mu)}
\def\la{\lambda}
\def\lg{l\raise.6ex\hbox to.2em{\hss.\hss}l}
\def\ll{\mathcal{L}}
\def\lra{\longrightarrow}
\def\m{{\mathfrak m}}
\def\md#1{\; \left(\op{mod}\ {#1}\right)}
\def\mmu{\mid_\mu}
\def\mn{\op{Min}}
\def\mx{\op{Max}}
\def\nph{N_{\mu,\phi}}
\def\om{\omega}
\def\oo{\mathcal{O}}
\def\orb{\hbox to  .3em{$\backslash$}\backslash}
\def\ord{\op{ord}}
\def\p{\mathfrak{p}}
\def\ppa{\mathcal{P}_{\alpha}}
\def\pset{\mathcal{P}}
\def\res{\operatorname{Res}}
\def\rr{\mathcal{R}}
\def\rrm{\mathcal{R}_\mu}
\def\smu{\sim_\mu}
\def\t{\theta}
\def\tcal{\mathcal{T}}
\def\tst{\tcal^{\operatorname{str}}}
\def\ty{\mathbf{t}}
\def\Vi{\V^{\op{ind}}}
\newtheorem{theorem}{Theorem}[section]
\newtheorem{lemma}[theorem]{Lemma}
\newtheorem{corollary}[theorem]{Corollary}
\newtheorem{proposition}[theorem]{Proposition}
\theoremstyle{definition}
\newtheorem{definition}[theorem]{Definition}
\newtheorem{example}[theorem]{Example}
\theoremstyle{remark}
\newtheorem{remark}[theorem]{Remark}
\numberwithin{equation}{section}
\begin{document}
\title{Genetics of polynomials over local fields}

\author[Gu\`ardia]{Jordi Gu\`ardia}
\address{Departament de Matem\`atica Aplicada IV, Escola Polit\`ecnica Superior d'Enginye\-ria de Vilanova i la Geltr\'u, Av. V\'\i ctor Balaguer s/n. E-08800 Vilanova i la Geltr\'u, Catalonia}
\email{guardia@ma4.upc.edu}

\author[Nart]{Enric Nart}
\address{Departament de Matem\`{a}tiques,
         Universitat Aut\`{o}noma de Barcelona,
         Edifici C, E-08193 Bellaterra, Barcelona, Catalonia}
\email{nart@mat.uab.cat}
\thanks{Partially supported by MTM2012-34611 from the Spanish MEC}
\date{}
\keywords{genomic tree, local field, Montes algorithm, Newton polygon, Okutsu equivalence,
OM factorization, OM representation, type, valuation}

\makeatletter
\@namedef{subjclassname@2010}{%
  \textup{2010} Mathematics Subject Classification}

\subjclass[2010]{Primary 11Y40; Secondary 11Y05, 11R04, 11R27}

% \title[short text for running head]{full title}

%    Only \author and \address are required; other information is
%    optional.  Remove any unused author tags.

%    author one information
% \author[short version for running head]{name for top of paper}

%    author two information

%\subjclass[2000]{Primary }
%    The 2010 edition of the Mathematics Subject Classification is
%    now available.  If you are citing a classification from the
%    new scheme, use the following input coding instead.
%\subjclass[2010]{Primary }

\begin{abstract}
Let $(K,v)$ be a discrete valued field with valuation ring  $\oo$, and let $\oo_v$ be the completion of $\oo$ with respect to the $v$-adic topology.
In this paper we discuss the advantages of manipulating polynomials in $\oo_v[x]$ on a computer by means of OM representations of prime (monic and irreducible) polynomials. An OM representation supports discrete data characterizing the Okutsu equivalence class of the prime polynomial. These discrete parameters are a kind of DNA sequence common to all individuals in the same Okutsu class, and they contain relevant arithmetic information about the polynomial and the extension of $K_v$ that it determines.
\end{abstract}

\maketitle

\section*{Introduction}
Polynomials with $p$-adic coefficients arose in a purely algebraic context with Hensel's reinterpretation of the ideas of Kummer and Dedekind about factorization of algebraic integers. The prime polynomials in $\Z_p[x]$, whose roots in $\overline{\Q}_p$ are algebraic over $\Q$, parameterize  prime ideals dividing the prime number $p$ in maximal orders of number fields.

More generally, let $A$ be a Dedekind domain with field of fractions $K$, let $f\in A[x]$ be a monic irreducible separable polynomial of degree $n$ and let $L=K[x]/(f)$ be the finite extension of $K$ determined by $f$. The prime ideals of the integral closure of $A$ in $L$ dividing a given prime ideal $\p$ in $A$ are in 1-1 correspondence with the prime factors of $f$ in $\hat{A}_\p[x]$, where $\hat{A}_\p$ is the completion of $A$ with respect to the $\p$-adic topology. This leads to a wide scope of arithmetic problems where prime polynomials over local fields play a significant role. For instance, the analysis of the ramification of a finite separable morphism between two algebraic curves is one of such problems.

In this paper, we deal with an arbitrary discrete valued field $(K,v)$ with valuation ring $\oo$. Let $K_v$ be the completion of $K$ at $v$ and denote by $\oo_v\subset K_v$ the valuation ring of $K_v$. Given a monic square-free polynomial $f\in\oo[x]$, we are interested in the computation of the prime factors of $f$ in $\oo_v[x]$. 

From a computational perspective, polynomials in $\oo_v[x]$ are approximated by polynomials in $\oo[x]$, following closely  the paradigm of polynomials with real coefficients. However, prime polynomials in $\oo_v[x]$ are much richer objects because they have an algebraic substrate containing relevant arithmetic information.
This substrate is described by a sequence of discrete parameters, which are a kind of DNA sequence common to all prime polynomials which are sufficiently close one to each other.
Thanks to their discrete nature, these genetic data admit an exact computation.

The aim of this paper is to give a precise description of this genetic information, and to explain how it may be computed. To this end, we present under a new framework some previous work of several authors, on computational tools for $v$-adic factorization.

Consider the simplest extension of $v$ to a valuation $\mu_0$ on $K_v(x)$; that is, $\mu_0$ acts on polynomials as
$$
\mu_0\left(a_0+a_1x+\cdots+a_tx^t\right)=\mn\left\{v(a_0),\dots,v(a_t)\right\}.
$$
Let $F\in\oo_v[x]$ be a prime polynomial and $\t\in\overline{K}_v$ one of its roots. The valuation $v$ admits a unique extension to $\overline{K}_v$ and we may consider the pseudo-valuation $\mu_{\infty,F}$ on $K_v[x]$ defined as $\mu_{\infty,F}(g)=v(g(\t))$ for any $g\in K_v[x]$. 

%The \emph{precision} of an approximation $\phi$ to $F$ is typically measured as $\mu_0(F-\phi)$, the distance with respect to the valuation $\mu_0$. A much finer measure of the distance between $\phi$ and $F$ is given by the rational number $\mu_{\infty,F}(F-\phi)=v(\phi(\t))$, which we call the \emph{quality} of the approximation.

In a pioneering paper, MacLane described an inductive structure on the set of all discrete valuations on $K(x)$ extending $v$ \cite{mcla,mclb}. He expressed the pseudo-valuation $\mu_{\infty,F}$ as a limit of such valuations and showed that in this approximation process there is a finite chain of valuations:
$$
\mu_0 < \mu_1<\dots <\mu_r <\mu_{\infty,F}
$$
intrinsically attached to $F$. Let us denote $\mu_F:=\mu_r$. Most of the genetic information of $F$ is provided by certain invariants and operators attached to these valuations $\mu_0,\dots,\mu_r$. %These genetic data provide a closed formula for the quality of an approximation to $F$ in $\oo[x]$ (cf. equation (\ref{quality})), and they yield a procedure to construct approximations with a prescribed quality (cf. section \ref{subsecSFL}). 

In 2007, Vaqui\'e reviewed and generalized MacLane's work to arbitrary valued fields $(K,v)$ which are not necessarily discrete \cite{Vaq,Vaq2,Vaq3}. The use of the graded algebra $\ggm$ attached to a valuation $\mu$ on $K(x)$ led Vaqui\'e to a more elegant presentation of the theory. A key role is played by the  
\emph{residual ideals} in the degree-zero subring $\Delta(\mu)$ of $\ggm$. The residual ideal of a polynomial $g\in K[x]$ is defined as $\rr_\mu(g)=H_\mu(g)\ggm\cap\Delta(\mu)$, where $\hm(g)$ is the image of $g$ in the piece of degree $\mu(g)$ of the algebra.

In a recent paper \cite{Rideals}, this approach of Vaqui\'e was extended with a constructive treatment of the theory in the discrete case. On the set $\P$ of all prime polynomials in $\oo_v[x]$, the following equivalence relation is considered in \cite{okutsu}: two prime polynomials $F,G\in\P$ of the same degree are \emph{Okutsu equivalent}, and we write $F\approx G$, if the \emph{quality} of $G$ as an approximation to $F$ is greater than certain \emph{Okutsu bound} $\delta_0(F)$.  
The main result of \cite{Rideals} establishes a canonical bijection between the quotient set $\,\P/\!\approx\,$ and the \emph{MacLane space} $\M$, defined as the set of all pairs $(\mu,\ll)$, where $\mu$ is an inductive valuation on $K(x)$ and $\ll$ is a \emph{strong} maximal ideal of $\Delta(\mu)$. The bijection sends the class of $F$ to the pair 
$(\mu_F,\rr_{\mu_F}(F))$. 

The point $(\mu,\ll)$ of the MacLane space which corresponds to the Okutsu class of a prime polynomal $F\in\P$ is, by definition, the \emph{genetic code} of $F$.  Thus, two prime polynomials have the same genetic code if and only if they are Okutsu equivalent.

Let us now outline the content of the paper, which is a natural continuation of \cite{Rideals}. In sections 1 and 2, we sketch the results of \cite{Rideals} in order to collect all technical definitions and results which are needed for the rest of the paper. In sections 3 and 4 we discuss \emph{types} and \emph{OM representations} as the computational objects which are able to support the genetic data of polynomials. The initials OM stand for Ore-MacLane or Okutsu-Montes indistinctly. We parameterize the MacLane space $\M$ by certain set $\T$ of equivalence classes of types and we introduce the \emph{genomic tree} of a square-free polynomial $f\in \oo[x]$ as a discrete object gathering the genetic information of all prime factors of $f$.
In section 5 we present an adapted version of the Montes algorithm, aiming at the computation of the genomic tree of $f$, together with an  approximation to each prime factor by an Okutsu equivalent polynomial in $\oo[x]$. The knowledge of the genetic code of a prime polynomial facilitates the resolution of many computational tasks concerning this polynomial. Section 6 is devoted to the discussion of these algorithmic applications. 

For a monic, irreducible and separable $f\in\oo[x]$, let $L$ be the finite extension of $K$ determined by $f$. It is well known that the computation of sufficiently good approximations to the prime factors of $f$ in $\oo_v[x]$ leads to the design of routines for the computation of the integral closure of $\oo$ in $L$, the $v$-part of the discriminant of $L/K$, and the resolution of similar arithmetic tasks concerning the extension $L/K$.  
Thus, the use of the Montes algorithm as a fast method to compute approximate $v$-adic factorizations leads to an improvement of many classical arithmetic algorithms. But this is not the spirit of the routines of section 6. For each routine, we find a tight link between some  arithmetic problem and the genetics of certain prime polynomials. This leads to an original design for the routine and to a much better practical performance.  

The concept of a type and the Montes algorithm were introduced in \cite{montes} for $v$ a discrete valuation on a global field $K$. These results were reviewed in \cite{algorithm,HN} and their computational implications were developed in a series of papers \cite{BNS,newapp, bases, GNP, Ndiff}. The derivation of these tools from the modern presentation of MacLane's valuations in the spirit of Vaqui\'e, leads to a more elegant treatment of the subject and to its generalization to arbitrary discrete valued fields $(K,v)$.

\section{MacLane valuations} \label{secMacLane}

Let $K$ be a field equipped with a discrete valuation $v\colon K^*\to \Z$, normalized so that $v(K^*)=\Z$. Let $\oo$ be the valuation ring of $K$, $\m$ the maximal ideal, $\pi\in\m$ a generator of $\m$ and $\F=\oo/\m$ the residue class field. %We denote by $\oo^*=\oo\setminus\m$ the group of units in $\oo$.

Let $K_v$ be the completion of $K$ and denote still by $v\colon \kb^*\to \Q$ the canonical extension of $v$ to a fixed algebraic closure of $K_v$. Let $\oo_v$ be the valuation ring of $K_v$, $\m_v$ its maximal ideal and $\F_v=\oo_v/\m_v$ the residue class field. We consider the canonical isomorphism $\F\simeq\F_v$ as an identity and we indicate simply with a bar, $\raise.8ex\hbox{---}\colon \oo_v[x]\longrightarrow \F[x]$,
the homomorphism of reduction of polynomials modulo $\m_v$. 

Let $\V$ be the set of discrete valuations, $\mu\colon K(x)^*\to \Q$, such that $\mu_{\mid K}=v$ and $\mu(x)\ge0$. From now on, the elements of $\V$ will be simply called \emph{valuations}.

For any valuation $\mu\in\V$, we denote

\begin{itemize}
\item $\Gamma(\mu)=\mu\left(K(x)^*\right)\subset \Q$, the cyclic group of finite values of $\mu$. 
\item $e(\mu)>0$, the ramification index of $\mu$, determined by $\Gamma(\mu)=e(\mu)^{-1}\Z$.   
\end{itemize}

In the set $\V$ there is a natural partial ordering:
$$
\mu\le \mu' \quad\mbox{ if }\quad\mu(g) \le \mu'(g), \ \forall\,g\in K[x]. 
$$

We denote by $\mu_0\in \V$ the valuation which acts on polynomials as
$$
\mu_0\left(a_0+a_1x+\cdots+a_tx^t\right)=\mn_{0\le s\le t}\left\{v(a_s)\right\}.
$$
Clearly, $\mu_0\le \mu$ for all $\mu\in\V$; in other words, $\mu_0$ is the minimum element in $\V$.

In this section we describe a certain subset $\Vi\subset\V$ introduced by MacLane \cite{mcla}, formed by the so-called \emph{inductive valuations}. The modern presentation of this topic in the language of graded algebras is due to Vaqui\'e \cite{Vaq}. We follow the development of \cite{Rideals} which included a constructive treatment of the subject.

\subsection{Key polynomials and augmented valuations}\label{subsecKeyP}
Let $\mu\in\V$ be a valuation. For any $\alpha\in\Gamma(\mu)$ we consider the following $\oo$-submodules in $K[x]$:
$$
\ppa=\ppa(\mu)=\{g\in K[x]\mid \mu(g)\ge \alpha\}\supset
\ppa^+=\ppa^+(\mu)=\{g\in K[x]\mid \mu(g)> \alpha\}.
$$    

The \emph{graded algebra of $\mu$} is the integral domain:
$$
\ggm:=\bigoplus\nolimits_{\alpha\in\Gamma(\mu)}\ppa/\ppa^+.
$$

Let $\;\Delta(\mu)=\pset_0/\pset_0^+$ be the piece of degree zero of this algebra. Clearly, $\oo\subset\pset_0$ and $\m=\pset_0^+\cap \oo$; thus, there is a canonical homomorphism $\F\to\Delta(\mu)$ equipping  $\Delta(\mu)$ (and $\ggm$) with a canonical structure of $\F$-algebra.

There is a natural map $\hm\colon K[x]\lra \ggm$, given by $\hm(0)=0$, and
$$\hm(g)= g+\pset_{\mu(g)}^+\in\pset_{\mu(g)}/\pset_{\mu(g)}^+, \mbox{ for }g\ne0.$$
This map does not respect addition but it is multiplicative: $\hm(gh)=\hm(g)\hm(h)$ for all $g,h\in K[x]$.

If $\mu\le \mu'$, then a canonical homomorphism of graded algebras $\ggm\to\gg(\mu')$ is determined by $g+\ppa^+(\mu)\mapsto g+\ppa^+(\mu')$ for all $g,\alpha$. Clearly, $\hm(g)$ belongs to $\op{Ker}(\ggm\to\gg(\mu'))$ if and only if $\mu(g)<\mu'(g)$. 
 
\begin{definition}
Let $g,h,\phi\in K[x]$. We say that:

$g,h$ are \emph{$\mu$-equivalent}, and we write $g\smu h$, if $\hm(g)=\hm(h)$.  

$g$ is \emph{$\mu$-divisible} by $h$, and we write $h\mmu g$, if $\hm(h)\mid\hm(g)$ in $\ggm$.

$\phi$ is \emph{$\mu$-irreducible} if $\hm(\phi)\ggm$ is a non-zero prime ideal.

$\phi$ is \emph{$\mu$-minimal} if $\deg \phi>0$ and $\phi\nmid_\mu g$ for any non-zero $g$ with $\deg g<\deg \phi$.

A \emph{key polynomial} for $\mu$ is a monic polynomial $\phi\in K[x]$ which is $\mu$-minimal and $\mu$-irreducible. We denote by $\kpm$ the set of all key polynomials for $\mu$. 
\end{definition}

For instance, $\kp(\mu_0)$ is the set of all monic polynomials $g\in\oo[x]$ such that $\overline{g}$ is irreducible in $\F[x]$.

\begin{lemma}\label{irredKv}
Every $\phi\in \kpm$ is irreducible in $K_v[x]$ and it belongs to $\oo[x]$.
\end{lemma}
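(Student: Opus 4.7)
I will establish the two claims in order: first the irreducibility of $\phi$ in $K_v[x]$, and then derive $\phi\in\oo[x]$ from it. For the irreducibility, I argue by contradiction: suppose $\phi=\phi_1\phi_2$ in $K_v[x]$ with each $\phi_i$ monic of positive degree $d_i<n:=\deg\phi$. The valuation $\mu$ extends continuously to a valuation on $K_v(x)$, since $\mu$ is discrete and $K$ is dense in $K_v$; under this extension, $\ggm$ embeds in the graded algebra of the extended valuation. Using density, I pick monic approximations $\widetilde\phi_i\in K[x]$ of $\phi_i$ of the same degree, so close that $\mu(\widetilde\phi_1\widetilde\phi_2-\phi)>\mu(\phi)$. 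Then $\hm(\widetilde\phi_1\widetilde\phi_2)=\hm(\phi)$, and by multiplicativity of $\hm$ this gives $\hm(\widetilde\phi_1)\hm(\widetilde\phi_2)=\hm(\phi)$ in $\ggm$. Since $\hm(\phi)\ggm$ is a non-zero prime ideal, one of the factors, say $\hm(\widetilde\phi_1)$, must lie in it; equivalently, $\phi\mmu\widetilde\phi_1$. But $\deg\widetilde\phi_1=d_1<n$, contradicting the $\mu$-minimality of $\phi$.

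For $\phi\in\oo[x]$: since $\phi\in K[x]$ and $\oo=K\cap\oo_v$, it suffices to show $\phi\in\oo_v[x]$. By the irreducibility just established, the roots of $\phi$ in $\overline{K}_v$ are Galois conjugate and share a common $v$-valuation $\rho$, so the coefficients of $\phi$ (the elementary symmetric functions of the roots) lie in $\oo_v$ precisely when $\rho\geq0$. To verify $\rho\geq0$, I consider the pseudo-valuation $\mu_{\infty,\phi}$ on $K[x]$ defined by $g\mapsto v(g(\theta))$ for $\theta$ a root of $\phi$. Since $\phi$ is a key polynomial of $\mu$, the pseudo-valuation $\mu_{\infty,\phi}$ arises (up to identification with the augmented pseudo-valuation $[\mu;\phi,\infty]$) by extending $\mu$ along $\phi$, and in particular $\mu_{\infty,\phi}\geq\mu$ on $K[x]$. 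Consequently, $\rho=v(\theta)=\mu_{\infty,\phi}(x)\geq\mu(x)\geq0$, as needed.

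The main obstacle is the approximation step in the irreducibility argument: justifying that $\mu$ extends continuously to $K_v(x)$ and that monic approximations $\widetilde\phi_i\in K[x]$ of the correct degree exist with $\mu(\widetilde\phi_i-\phi_i)$ arbitrarily large. Both are standard consequences of the discreteness of $\mu$ and the density of $K$ in $K_v$; once in place, the combinatorial heart of the argument uses only multiplicativity of $\hm$ and primality of $\hm(\phi)\ggm$. The dominance $\mu_{\infty,\phi}\geq\mu$ used in the integrality part is a smaller but non-trivial auxiliary input, following from the theory of augmented valuations via $[\mu;\phi,\infty]$.
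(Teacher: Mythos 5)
Your argument for irreducibility in $K_v[x]$ is essentially sound, and it is close in spirit to what the theory actually uses: approximate a putative monic factorization $\phi=\phi_1\phi_2$ over $K_v$ by monic $\widetilde\phi_i\in K[x]$ with $\mu_0(\widetilde\phi_1\widetilde\phi_2-\phi)$ large, observe that $\mu\ge\mu_0$ forces $\mu(\widetilde\phi_1\widetilde\phi_2-\phi)>\mu(\phi)$, and then the multiplicativity of $H_\mu$ together with the primality of $H_\mu(\phi)\,\gg(\mu)$ gives $\phi\mmu\widetilde\phi_1$ or $\phi\mmu\widetilde\phi_2$, contradicting $\mu$-minimality. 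Note, however, that the opening remark is not used anywhere, and it is also false as stated: a general $\mu\in\V$ need not extend to a \emph{valuation} on $K_v(x)$. For instance, $\mu(g)=v(g(\theta))$ for $\theta\in\oo_v$ transcendental over $K$ lies in $\V$ but becomes a pseudo-valuation with non-zero kernel once extended to $K_v[x]$. (Proposition~\ref{KKv} only treats \emph{inductive} valuations, where a MacLane chain supplies the extension.) Since your actual computations take place entirely in $K[x]$, the remark should simply be deleted.

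The second half is where the genuine gap lies. You reduce $\phi\in\oo[x]$ to $\rho=v(\theta)\ge 0$, which is fine, but you then derive $\rho\ge 0$ from $\mu\le\mu_{\infty,\phi}$ together with $\mu(x)\ge0$. The inequality $\mu\le\mu_{\infty,\phi}$ is true, but it is a substantial fact, not a formal consequence of the augmentation formalism as the ``$[\mu;\phi,\infty]$'' identification suggests. To identify $\mu_{\infty,\phi}$ with the limit pseudo-valuation one needs that, for $\deg a<\deg\phi$, $v(a(\theta))=\mu(a)$; equivalently, that $\mu$ induces a valuation on $K[x]/(\phi)$ which then must coincide with the one coming from the (unique, since $K_v$ is complete) extension of $v$ to $K_v[x]/(\phi)$. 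This itself uses the already-established irreducibility over $K_v$ plus the ``homogeneity'' property $\mu(g)=\min_s\mu(a_s\phi^s)$ of key polynomials, none of which is acknowledged. In particular, there is a real risk of circularity if one reaches for results such as Theorem~\ref{vgt} that presuppose $\phi$ is a prime polynomial in $\oo_v[x]$. A more self-contained route: once $\phi$ is known irreducible over $K_v$, its ordinary Newton polygon is a single segment of slope $-\rho$ from $(0,n\rho)$ to $(n,0)$; if $\rho<0$ one gets $\mu_0(\phi)=v(a_0)=n\rho<0$ with a unique minimizing term $a_0\in K^\ast$, forcing $H_\mu(\phi)=H_\mu(a_0)$ to be a unit, which contradicts $\mu$-minimality outright (take $g=1$). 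This avoids invoking $\mu_{\infty,\phi}\ge\mu$ altogether.
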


Take $\phi\in \kpm$ and $\nu\in \Q_{>0}$. The \emph{augmented valuation} of $\mu$ with respect to these data is the valuation $\mu'$ determined by the following action on $K[x]$:
$$\mu'(g)=\mn\nolimits_{0\le s}\{\mu(a_s\phi^s)+s\nu\},$$ 
where $g=\sum_{0\le s}a_s\phi^s$ is the canonical $\phi$-expansion of $g$. We denote $\mu'=[\mu;\phi,\nu]$.

\begin{proposition}\label{extension}\mbox{\null}
\begin{enumerate}
\item 
The natural extension of $\mu'$ to $K(x)$ is a valuation on this field and $\mu\le\mu'$.
\item
$\op{Ker}\left(\ggm\to\gg(\mu')\right)=\hm(\phi)\,\ggm$.
\item $\phi$ is a key polynomial for $\mu'$ too.
%\item For any non-zero $g\in K[x]$, $\hmp(g)$ is a unit in $\ggmp$ if and only if $g\sim_{\mu'}a$ for some $a\in K[x]$ such that $\deg a<\deg\phi$. This condition holds if $\phi\nmid_{\mu}g$.
\end{enumerate}
\end{proposition}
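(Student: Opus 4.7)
Plan. For part (1), I verify the valuation axioms for $\mu'$ directly on $K[x]$; the extension to $K(x)$ then follows by quotients. The axiom $\mu'|_K = v$ is immediate (constants have trivial $\phi$-expansion), and the ultrametric inequality $\mu'(g+h) \ge \mn\{\mu'(g),\mu'(h)\}$ is termwise: the $\phi$-expansion of $g+h$ is obtained by coefficientwise addition with no need for re-expansion, since $\dg(a_s+b_s) < \dg\phi$. The inequality $\mu \le \mu'$ reduces to the identity $\mu(g) = \mn_s\mu(a_s\phi^s)$, a direct consequence of $\mu$-minimality of $\phi$ recorded in the preceding subsection; combined with $\nu > 0$ it gives $\mu'(g) \ge \mu(g)$. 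The substantive axiom is multiplicativity $\mu'(gh) = \mu'(g)+\mu'(h)$: the inequality $\ge$ comes from formally expanding $\bigl(\sum a_s\phi^s\bigr)\bigl(\sum b_t\phi^t\bigr)$ and tracking the carries from division $a_sb_t = q\phi + r$ with $\dg r < \dg\phi$, while the reverse inequality invokes $\mu$-irreducibility of $\phi$, which guarantees that the leading contributions to the product $\hm(g)\hm(h)$ in $\ggm$ do not cancel.

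For part (2), the inclusion $\hm(\phi)\ggm \subseteq \op{Ker}(\ggm \to \gg(\mu'))$ is immediate from multiplicativity: for any $h \in K[x]$,
\[
\mu'(\phi h) - \mu(\phi h) = \nu + (\mu'(h) - \mu(h)) \ge \nu > 0,
\]
so $\hm(\phi h) = \hm(\phi)\hm(h)$ lies in the kernel, and such elements generate $\hm(\phi)\ggm$ as an ideal. Conversely, suppose $\mu(g) < \mu'(g)$ with $\phi$-expansion $g = \sum a_s\phi^s$. The identity $\mu(g) = \mn_s\,\mu(a_s\phi^s)$ together with $\nu > 0$ forces the index $s = 0$ not to achieve this minimum: either $a_0 = 0$ or $\mu(a_0) > \mu(g)$. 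In either case, $a_0$ is negligible in the graded piece of degree $\mu(g)$, and
\[
\hm(g) = \hm\Bigl(\sum_{s\ge 1} a_s\phi^s\Bigr) = \hm(\phi)\cdot \hm\Bigl(\sum_{s\ge 1} a_s\phi^{s-1}\Bigr) \in \hm(\phi)\ggm.
\]

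For part (3), I use parts (1) and (2) together with the structural picture of $\gg(\mu')$ as an extension of the subring $\ggm/\hm(\phi)\ggm$ by adjunction of the homogeneous element $H_{\mu'}(\phi)$. For $\mu'$-minimality, any $h$ with $0 < \dg h < \dg \phi$ has trivial $\phi$-expansion $h = a_0$ with $a_0 = h$, whence $\mu'(h) = \mu(h)$; the $\mu$-minimality of $\phi$ yields $\hm(h) \notin \hm(\phi)\ggm$, so by (2) the image $H_{\mu'}(h)$ in $\gg(\mu')$ is non-zero and, lying in the subring $\ggm/\hm(\phi)\ggm$, is not divisible by the adjoined element $H_{\mu'}(\phi)$. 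For $\mu'$-irreducibility, the $\mu$-irreducibility of $\phi$ makes $\hm(\phi)\ggm$ prime, so the subring $\ggm/\hm(\phi)\ggm$ is a domain, and $H_{\mu'}(\phi)\gg(\mu')$ inherits primality from the polynomial-ring-like structure of the adjunction. The principal technical obstacle is the multiplicativity step in (1); once it is secured, parts (2) and (3) proceed almost formally from the definitions and the preceding results on key polynomials.
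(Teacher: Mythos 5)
The paper itself gives no proof of this proposition; it is stated as background and its proof is delegated to \cite{Rideals}, which in turn traces back to MacLane \cite{mcla} and Vaqui\'e \cite{Vaq}. So your attempt cannot be compared to a proof in this paper, and must be judged on its own.

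Your outline for parts (1) and (2) is sound, and the kernel argument in (2) is essentially correct: the map is graded of degree zero, so the kernel is a homogeneous ideal, and your observation that $\mu(g)<\mu'(g)$ forces the $s=0$ term of the $\phi$-expansion to be non-minimal, whence $\hm(g)=\hm(\phi)\hm(\sum_{s\ge1}a_s\phi^{s-1})$, is exactly right. Two caveats, though. First, you repeatedly lean on the identity $\mu(g)=\min_s\mu(a_s\phi^s)$ for the $\phi$-expansion, attributing it to material ``recorded in the preceding subsection'' — but no such statement appears in the paper. It is a genuine lemma (one of the standard equivalent characterizations of $\mu$-minimality) and needs to be proved or explicitly cited, not silently assumed; without it, neither $\mu\le\mu'$ nor the multiplicativity argument goes through. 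Second, the multiplicativity of $\mu'$ — which you correctly flag as the substantive axiom — is only gestured at: ``the leading contributions do not cancel'' is precisely what must be proved, and the proof requires combining the above minimality lemma (so that re-expansion $a_sb_t=q_{st}\phi+r_{st}$ satisfies $\mu(a_sb_t)=\min\{\mu(q_{st}\phi),\mu(r_{st})\}$) with the fact that $\ggm$ is a domain. Stating the strategy is not the same as carrying it out.

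The real gap is in part (3). Your argument invokes a ``structural picture of $\gg(\mu')$ as an extension of the subring $\ggm/\hm(\phi)\ggm$ by adjunction of $H_{\mu'}(\phi)$,'' and derives both $\mu'$-minimality and $\mu'$-irreducibility from it. But that structure theorem (it is essentially Theorem \ref{structure} of the paper, applied to $\mu'$) is itself a substantial result whose proof \emph{uses} the fact that $\phi$ is a key polynomial for $\mu'$ — so your argument is circular, or at best relies on an unproven deep theorem. A self-contained proof is in fact available and avoids the structure theorem entirely. For $\mu'$-minimality: if $0<\deg h<\deg\phi$ and $H_{\mu'}(h)=H_{\mu'}(\phi)H_{\mu'}(g)$, then $\mu'(h-\phi g)>\mu'(h)$; but the $\phi$-expansion of $h-\phi g$ has constant term $h$, so $\mu'(h-\phi g)\le\mu(h)=\mu'(h)$, contradiction. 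For $\mu'$-irreducibility, one first shows (by the same constant-term trick) that $H_{\mu'}(\phi)\mid H_{\mu'}(g)$ if and only if $\mu(a_0)>\mu'(g)$, where $a_0$ is the constant term of the $\phi$-expansion of $g$. If $H_{\mu'}(\phi)\mid H_{\mu'}(gh)$ but $\mu(a_0)=\mu'(g)$ and $\mu(b_0)=\mu'(h)$, then the constant term $c_0$ of $gh$ is the remainder of $a_0b_0$ on division by $\phi$; multiplicativity of $\mu$ and $\mu'$ plus the minimality characterization force $\phi\mid_\mu a_0b_0$, and $\mu$-irreducibility of $\phi$ gives $\phi\mid_\mu a_0$ or $\phi\mid_\mu b_0$, contradicting $\mu$-minimality. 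This is the argument your claim that (3) is ``almost formal'' is implicitly relying on, but it is not the argument you wrote, and it does require the work.
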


Denote $\Delta=\Delta(\mu)$, and let $I(\Delta)$ be the set of ideals in $\Delta$. Consider the following \emph{residual ideal operator}, which translates questions about $K[x]$ and $\mu$ into ideal-theoretic considerations in the ring $\Delta$:
$$
\rr=\rrm\colon K[x]\lra I(\Delta),\qquad g\mapsto \Delta\cap \hm(g)\ggm.
$$

Let $\phi$ be a key polynomial for $\mu$. Choose a root $\t \in\kb$ of $\phi$ and denote $K_\phi=K_v(\t)$ the finite extension of $K_v$ generated by $\t$. Also, let $\oo_\phi\subset K_\phi$ be the valuation ring of $K_\phi$, $\m_\phi$ the maximal ideal and $\F_\phi=\oo_\phi/\m_\phi$ the residue class field. 

\begin{proposition}\label{sameideal}
If $\phi$ is a key polynomial for $\mu$, then 
\begin{enumerate}
\item $\rr(\phi)=\op{Ker}(\Delta\twoheadrightarrow \F_\phi)$ for the onto homomorphism $\Delta\to \F_\phi$ determined by $g+\pset^+_0\ \mapsto\ g(\t)+\m_\phi$. 
In particular, $\rr(\phi)$ is a maximal ideal of $\Delta$.
\item $\rr(\phi)=\op{Ker}(\Delta\to \Delta(\mu'))$ for any augmented valuation  $\mu'=[\mu;\phi,\nu]$. Thus, the image of $\Delta\to\Delta(\mu')$ is a field canonically isomorphic to $\F_\phi$. 
\end{enumerate}
\end{proposition}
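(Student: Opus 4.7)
My plan is to prove (1) by constructing an evaluation map and computing its kernel; part (2) then follows almost immediately from Proposition \ref{extension}(2) combined with (1).

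For (1), I would first verify that the evaluation $\op{ev}\colon \pset_0 \to \F_\phi$, $g \mapsto g(\t)+\m_\phi$, is well defined. A foundational fact of MacLane's theory, valid because $\phi$ is a key polynomial, is that $\mu \leq \mu_{\infty,\phi}$; that is, $v(g(\t))\geq \mu(g)$ for every $g\in K[x]$. Hence $g\in \pset_0$ implies $g(\t)\in \oo_\phi$, and $g\in \pset_0^+$ implies $g(\t)\in \m_\phi$, so $\op{ev}$ descends to a ring homomorphism $\Delta \to \F_\phi$. Surjectivity requires an approximation argument: every class in $\F_\phi$ is represented by $h(\t)$ for some $h\in \oo_v[x]$ with $\deg h<\deg\phi$, and using density of $\oo$ in $\oo_v$ together with the agreement $\mu(h_0)=v(h_0(\t))$ for $h_0\in K[x]$ of degree less than $\deg\phi$ (another consequence of $\mu$-minimality) one produces a representative $h_0\in \oo[x]$.

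The crux is the identification $\op{Ker}(\op{ev})=\rr(\phi)$. One inclusion is direct: if $g\in\pset_0$ with $\hm(\phi)\mid\hm(g)$, write $\hm(g)=\hm(\phi)\hm(h)$ so that $g\smu \phi h$ and $\mu(g-\phi h)>0$; since $\phi(\t)=0$, evaluation gives $g(\t)=(g-\phi h)(\t)$, which has positive $v$-value. For the reverse, take $g\in\pset_0$ with $v(g(\t))>0$ and expand $g=\sum_{s\geq 0}a_s\phi^s$ in $\phi$-adic form. Since $g(\t)=a_0(\t)$, either $a_0=0$, in which case $g=\phi\cdot\sum_{s\geq 1}a_s\phi^{s-1}$ directly, or $\deg a_0<\deg\phi$ and $\mu(a_0)=v(a_0(\t))>0$ by the agreement noted above. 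Invoking the identity $\mu(g)=\min_s\mu(a_s\phi^s)$ (a consequence of $\mu$-minimality of $\phi$) and noting $\mu(a_0)>0=\mu(g)$, the minimum must be achieved at some $s\geq 1$. Thus with $h=\sum_{s\geq 1}a_s\phi^{s-1}$ we have $\mu(\phi h)=0$ and $g\equiv \phi h\pmod{\pset_0^+}$, giving $\hm(g)=\hm(\phi)\hm(h)\in\hm(\phi)\ggm$. Maximality of $\rr(\phi)$ is then automatic, since $\Delta/\rr(\phi)\simeq\F_\phi$ is a field.

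Part (2) drops out: Proposition \ref{extension}(2) gives $\op{Ker}(\ggm\to\gg(\mu'))=\hm(\phi)\ggm$, so restricting to the degree-zero piece yields $\op{Ker}(\Delta\to\Delta(\mu'))=\Delta\cap\hm(\phi)\ggm=\rr(\phi)$. Combined with (1), the image of $\Delta\to\Delta(\mu')$ is therefore isomorphic to $\Delta/\rr(\phi)\simeq\F_\phi$, canonically. The main obstacle is justifying the MacLane-theoretic facts used throughout --- that $\mu\leq\mu_{\infty,\phi}$, that $\mu$ and $\mu_{\infty,\phi}$ agree on polynomials of degree $<\deg\phi$, and that $\mu$ evaluates $\phi$-expansions by the minimum formula. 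Each follows from $\phi$ being a key polynomial, but none is tautological; all rest on the constructive apparatus developed in \cite{Rideals}.
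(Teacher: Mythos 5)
The paper itself does not prove Proposition \ref{sameideal}: Sections 1--2 are an exposition that explicitly defers all proofs to \cite{Rideals}. Judged on its own, your argument is correct and follows the natural route one would expect in \cite{Rideals}. The three facts you flag as needing justification --- that $\mu\le\mu_{\infty,\phi}$, that $\mu$ and $\mu_{\infty,\phi}$ agree on nonzero polynomials of degree $<\deg\phi$ (which is the $\phi\nmid_\mu g$ case of Theorem \ref{vgt}), and that $\mu(g)=\min_s\mu(a_s\phi^s)$ for the $\phi$-expansion --- are indeed exactly the MacLane-theoretic inputs, and all are consequences of $\mu$-minimality of $\phi$ established in \cite{Rideals}, so the dependency is acknowledged appropriately. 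The kernel computation in both directions is sound: in the reverse direction you correctly reduce to $\mu(g)=0$, observe $\mu(a_0)>0$, and conclude $\mu(\phi h)=0$ from the minimum formula, giving $\hm(g)=\hm(\phi)\hm(h)$. Part (2) via restricting Proposition \ref{extension}(2) to the degree-zero piece is the intended deduction. One small imprecision: in the surjectivity argument, the lift of a class in $\F_\phi$ to a polynomial $h_0$ of degree $<\deg\phi$ with $\mu(h_0)\ge 0$ need only land in $\pset_0\subset K[x]$, not in $\oo[x]$; $\mu$ is generally larger than $\mu_0$, so $\pset_0$ can contain polynomials with non-integral coefficients. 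This does not affect the argument, since membership in $\pset_0$ is all that is needed to define a class in $\Delta$.
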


\subsection{Newton polygons}\label{subsecNewton}
The choice of a key polynomial $\phi$ for a valuation $\mu$ determines a \emph{Newton polygon operator}
$$
\nph\colon K[x]\lra 2^{\R^2},
$$
where $2^{\R^2}$ is the set of subsets of the euclidean plane $\R^2$. The Newton polygon of the zero polynomial is the empty set. If $g=\sum_{0\le s}a_s\phi^s$ is the canonical $\phi$-expansion of a non-zero polynomial $g\in K[x]$, then $\nph(g)$ is the lower convex hull of the cloud of points $(s,\mu(a_s\phi^s))$ for all $0\le s$. Figure \ref{figNmodel} shows the typical shape of $\nph(g)$.

If the Newton polygon $N=\nph(g)$ is not a single point, we formally write $N=S_1+\cdots +S_k$, where $S_i$ are the sides of $N$, ordered by their increasing slopes. The left and right end points of $N$ and the points joining two sides of different slopes are called the \emph{vertices} of $N$.

Usually, we shall be interested only in the \emph{principal Newton polygon} $\nph^-(g)$ formed by the sides of negative slope. If there are no sides of negative slope, then $\nph^-(g)$ is the left end point of $\nph(g)$.

The \emph{length} $\ell(N)$ of a Newton polygon $N$ is the abscissa of its right end point.

\begin{lemma}\label{length2}
For every non-zero polynomial $g\in K[x]$, we have
$$\ell(\nph^-(g))=\ord_{\mu,\phi}(g),$$ where $\ord_{\mu,\phi}(g)$ denotes the largest integer $s$ such that $\phi^s\mid_\mu g$. 
\end{lemma}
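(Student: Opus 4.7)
The plan is to identify $t := \ell(\nph^-(g))$ combinatorially from the $\phi$-expansion $g = \sum_{s\ge 0} a_s\phi^s$ (with $\deg a_s < \deg \phi$), and then to read off both inequalities $\ord_{\mu,\phi}(g) \ge t$ and $\ord_{\mu,\phi}(g) \le t$ by a direct calculation inside the graded algebra $\ggm$. Since $\nph^-(g)$ collects the sides of strictly negative slope, its right endpoint is the leftmost point of $\nph(g)$ lying at the minimum height $\mu(g)$; equivalently, $t$ is the smallest $s$ with $\mu(a_s\phi^s)=\mu(g)$, and for every $s<t$ with $a_s\ne 0$ one has $\mu(a_s\phi^s)>\mu(g)$.

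For $\phi^t\mmu g$ I would reduce the $\phi$-expansion modulo $\ppa^+$ with $\alpha=\mu(g)$. Every term with $s<t$ already lies in $\ppa^+$, so
\[
\hm(g) \;=\; \sum_{s\in T}\hm(a_s)\hm(\phi)^s \;=\; \hm(\phi)^t\Bigl(\sum_{s\in T}\hm(a_s)\hm(\phi)^{s-t}\Bigr),
\]
where $T=\{s\ge t : \mu(a_s\phi^s)=\mu(g)\}$ contains $t$. This exhibits $\hm(\phi)^t\mid \hm(g)$ in $\ggm$, giving $\ord_{\mu,\phi}(g)\ge t$.

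For the reverse inequality I would pass to the quotient by the prime ideal $\hm(\phi)\ggm$ (prime because $\phi$ is $\mu$-irreducible). Every $s>t$ term in the parenthesized factor carries a positive power of $\hm(\phi)$ and so dies, leaving only the image of $\hm(a_t)$. Since $\deg a_t<\deg\phi$, the $\mu$-minimality of $\phi$ forces $\phi\nmid_\mu a_t$, equivalently $\hm(\phi)\nmid \hm(a_t)$ in $\ggm$. Hence the parenthesized sum does not lie in $\hm(\phi)\ggm$, so $\hm(\phi)^{t+1}\nmid \hm(g)$ and $\ord_{\mu,\phi}(g)\le t$.

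The main obstacle is really only notational: one must check the edge cases in which $\nph(g)$ is degenerate (e.g.\ a single point, as when $g$ is a non-zero constant or a pure power $\phi^n$, or when $\nph(g)$ has no side of negative slope). In each of these cases the characterization of $t$ and the identification of the surviving term $\hm(a_t)$ in $\ggm/\hm(\phi)\ggm$ still hold, so no extra ingredient is needed beyond the $\mu$-minimality and $\mu$-irreducibility of $\phi$.
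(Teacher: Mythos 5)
The paper states Lemma~\ref{length2} without proof (it is part of the material imported from \cite{Rideals}), so there is no paper-internal argument to compare against; but your proof is correct, and it is the natural one. The only tacit ingredient is the identity $\mu(g)=\min_s\mu(a_s\phi^s)$ for the $\phi$-expansion of $g$, which is what guarantees both that the cloud of points actually descends to level $\mu(g)$ (so that $T\ne\emptyset$) and that every omitted term $a_s\phi^s$ with $s\notin T$ lies in $\ppa^+$. This identity is itself a standard consequence of the $\mu$-minimality of $\phi$: if some cancellation occurred, the coefficient $a_{t_0}$ at the leftmost index of minimal value in the cloud would be forced to be $\mu$-divisible by $\phi$, contradicting $\deg a_{t_0}<\deg\phi$. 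In effect this is the same mechanism you later invoke to rule out $\hm(\phi)\mid\hm(a_t)$, so it is worth stating it once up front rather than leaving it implicit; the paper's Figure~\ref{figNmodel} already presupposes it. Granted that, your factorization $\hm(g)=\hm(\phi)^t\,u$ with $u$ homogeneous and $u\not\in\hm(\phi)\ggm$ (because $u\equiv\hm(a_t)\pmod{\hm(\phi)\ggm}$ and $\hm(a_t)\ne 0$ survives by $\mu$-minimality), together with cancellation in the integral domain $\ggm$, gives both inequalities cleanly, including the degenerate cases ($\nph^-(g)$ a single point, or $g$ a power of $\phi$) that you flag at the end.
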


\begin{figure}
\caption{Newton polygon of a polynomial $g\in K[x]$}\label{figNmodel}
\begin{center}
\setlength{\unitlength}{5mm}
\begin{picture}(14,9)
\put(10.8,2.8){$\bullet$}\put(8.8,3.8){$\bullet$}\put(7.8,2.8){$\bullet$}\put(5.8,4.8){$\bullet$}
\put(4.8,3.8){$\bullet$}\put(3.8,5.8){$\bullet$}\put(2.8,7.8){$\bullet$}
\put(-1,1){\line(1,0){15}}\put(0,0){\line(0,1){9}}
\put(8,3){\line(-3,1){3}}\put(5,4){\line(-1,2){2}}\put(8,3.03){\line(-3,1){3}}
\put(5,4.03){\line(-1,2){2}}\put(11,3){\line(-1,0){3}}\put(11,3.02){\line(-1,0){3}}
\put(12.85,4.8){$\bullet$}\put(11,3){\line(1,1){2}}\put(11,3.02){\line(1,1){2}}
\multiput(3,.9)(0,.25){29}{\vrule height2pt}
\multiput(8,.9)(0,.25){9}{\vrule height2pt}
\multiput(13,.9)(0,.25){16}{\vrule height2pt}
\put(7,.1){\begin{footnotesize}$\ord_{\mu,\phi}(g)$\end{footnotesize}}
\put(2.1,.15){\begin{footnotesize}$\ord_{\phi}(g)$\end{footnotesize}}
\put(12,.15){\begin{footnotesize}$\ell(\nph(g))$\end{footnotesize}}
\multiput(-.1,3)(.25,0){55}{\hbox to 2pt{\hrulefill }}
\put(6,8){\begin{footnotesize}$\nph(g)$\end{footnotesize}}
\put(-1.7,2.85){\begin{footnotesize}$\mu(g)$\end{footnotesize}}
\put(-.45,.35){\begin{footnotesize}$0$\end{footnotesize}}
\end{picture}
\end{center}
\end{figure}
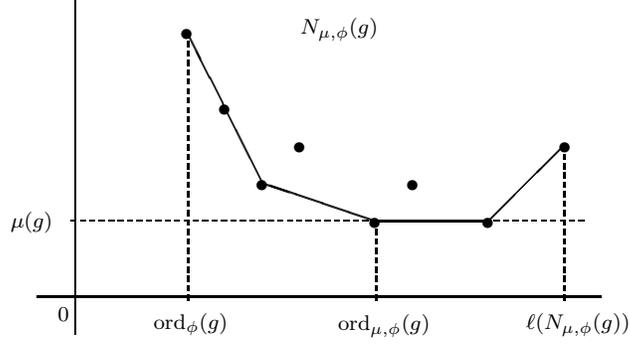

Let $\nu$ be a positive rational number and let $L_{-\nu}$ be the line of slope $-\nu$ which first touches the polygon $N_{\mu,\phi}(g)$ from below.

We define the \emph{$\nu$-component} of $N=N_{\mu,\phi}(g)$ as the segment 
$$S_\nu(g):=\{(x,y)\in N\mid y+\nu x\mbox{ is minimal}\}=N\cap L_{-\nu},
$$ 
and we denote by $s(g)\le s'(g)$ the abscissas of the end points of $S_\nu(g)$, where  $\mu'=[\mu;\phi,\nu]$. 
If $N$ has a side $S$ of slope $-\nu$, then $S_\nu(g)=S$; otherwise, $S_\nu(g)$ is a vertex of $N$ and $s(g)=s'(g)$ (see Figure \ref{figComponent}).  

The next result facilitates the computation of the value $\mu'(g)$ from the Newton polygon $N_{\mu,\phi}(g)$.

\begin{lemma}\label{muprim}
With the above notation, the line $L_{-\nu}$ cuts the vertical axis at the point $(0,\mu'(g))$. Also, $s(g)=\ord_{\mu',\phi}(g)$. 
\end{lemma}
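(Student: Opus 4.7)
The plan is to read both assertions off from the definition of the augmented valuation $\mu' = [\mu;\phi,\nu]$ once the right relationship between the Newton polygons $N_{\mu,\phi}(g)$ and $N_{\mu',\phi}(g)$ is in place, and to invoke Lemma~\ref{length2} applied to $\mu'$ for the second statement.

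First I would establish the identity $\mu'(a_s\phi^s) = \mu(a_s\phi^s) + s\nu$ for every term of the canonical $\phi$-expansion $g = \sum_s a_s \phi^s$. Each coefficient $a_s$ has $\deg a_s < \deg \phi$, so its own $\phi$-expansion reduces to a single term and the defining formula for $\mu'$ forces $\mu'(a_s) = \mu(a_s)$; combined with $\mu'(\phi) = \mu(\phi) + \nu$ and the multiplicativity of $\mu'$, the identity follows. The first assertion is then immediate: the line $L_{-\nu}\colon y = -\nu x + c$ touches $N_{\mu,\phi}(g)$ from below precisely when $c$ is the largest constant with $c \le y + \nu x$ for every point $(x,y)$ of the polygon; since the polygon is the lower convex hull of the cloud, this amounts to $c = \min_s \{\mu(a_s\phi^s) + s\nu\}$, which by definition equals $\mu'(g)$. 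Evaluating $L_{-\nu}$ at $x=0$ gives the $y$-intercept $(0,\mu'(g))$.

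For the second assertion I would observe that the vertical shear $T\colon (s,y) \mapsto (s, y + s\nu)$ sends the cloud $\{(s,\mu(a_s\phi^s))\}$ to $\{(s,\mu'(a_s\phi^s))\}$ by the identity above. Because $T$ adds the same quantity $s\nu$ both to a point's ordinate and to the ordinate of any chord above it at the same abscissa, $T$ preserves the relation "lies on or above," so it carries the lower convex hull $N_{\mu,\phi}(g)$ onto $N_{\mu',\phi}(g)$ and increases each slope by exactly $\nu$. Consequently, sides of $N_{\mu,\phi}(g)$ of slope strictly less than $-\nu$ correspond to the sides of strictly negative slope of $N_{\mu',\phi}(g)$, the component $S_\nu(g)$ is mapped to a horizontal segment (or a single vertex) at which the minimum value $\mu'(g)$ is attained, and sides of slope greater than $-\nu$ map to sides of positive slope. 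Hence the right endpoint of the principal polygon $N^-_{\mu',\phi}(g)$ lies at abscissa $s(g)$, and Lemma~\ref{length2} applied to $\mu'$ yields $s(g) = \ell(N^-_{\mu',\phi}(g)) = \ord_{\mu',\phi}(g)$.

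No single step is genuinely hard; the only matter requiring care is the bookkeeping around the principal polygon, since a principal polygon is formed by sides of \emph{strictly} negative slope while $S_\nu(g)$ itself is mapped to a \emph{horizontal} segment (or a vertex). Getting the conventions right in the degenerate case where $S_\nu(g)$ is a single vertex, so that $s(g) = s'(g)$ still equals the length of $N^-_{\mu',\phi}(g)$, is the one place where I would slow down.
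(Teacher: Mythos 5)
Your argument is correct. Note first that the paper states Lemma~\ref{muprim} without proof, since it belongs to the material summarized from the reference \cite{Rideals}; so there is no in-paper argument to compare against, and your proof stands on its own. Both halves are sound: the identity $\mu'(a_s\phi^s)=\mu(a_s\phi^s)+s\nu$ follows from $\deg a_s<\deg\phi$, the definition of the augmentation, and multiplicativity of $\mu'$, after which the maximal intercept of a line of slope $-\nu$ lying below the cloud is $\min_s\{\mu(a_s\phi^s)+s\nu\}=\mu'(g)$ by definition. For the second assertion, your vertical shear $T(s,y)=(s,y+s\nu)$ preserves abscissas and vertical ordering, hence carries $N_{\mu,\phi}(g)$ onto $N_{\mu',\phi}(g)$ while adding $\nu$ to every slope; thus sides of slope $<-\nu$ become the sides of negative slope of $N_{\mu',\phi}(g)$, $S_\nu(g)$ becomes a horizontal segment or vertex, and the right end of $N^-_{\mu',\phi}(g)$ lies at abscissa $s(g)$ in every case, including the degenerate ones you flag. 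The one implicit dependency you should make explicit is Proposition~\ref{extension}(3): invoking Lemma~\ref{length2} for the pair $(\mu',\phi)$ requires that $\phi$ be a key polynomial for $\mu'$, which is precisely what that proposition supplies.
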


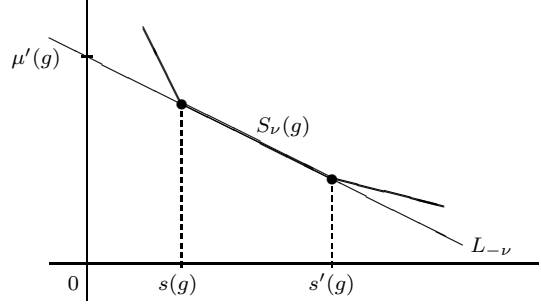
\begin{figure}
\caption{$\nu$-component of $N_{\mu,\phi}(g)$}\label{figComponent}
\begin{center}
\setlength{\unitlength}{5mm}
\begin{picture}(14,8)
\put(2.35,5.05){$\bullet$}\put(6.35,3.05){$\bullet$}
\put(-1,1){\line(1,0){13}}\put(0,0){\line(0,1){8}}
\put(-1,7){\line(2,-1){11}}
\put(2.5,5.25){\line(-1,2){1}}\put(2.5,5.3){\line(-1,2){1}}
\put(2.5,5.25){\line(2,-1){4}}\put(2.5,5.3){\line(2,-1){4}}
\put(6.5,3.25){\line(4,-1){3}}\put(6.5,3.28){\line(4,-1){3}}
\multiput(2.5,.9)(0,.25){17}{\vrule height2pt}
\multiput(6.5,.9)(0,.25){9}{\vrule height2pt}
\put(5.9,.3){\begin{footnotesize}$s'(g)$\end{footnotesize}}
\put(1.9,.3){\begin{footnotesize}$s(g)$\end{footnotesize}}
\put(10.2,1.3){\begin{footnotesize}$L_{-\nu}$\end{footnotesize}}
\put(-.5,.3){\begin{footnotesize}$0$\end{footnotesize}}
%\put(6,8.2){\begin{footnotesize}$\nph(g)$\end{footnotesize}}
\put(4.5,4.5){\begin{footnotesize}$S_\nu(g)$\end{footnotesize}}
\put(-.15,6.5){\line(1,0){.3}}
\put(-2,6.3){\begin{footnotesize}$\mu'(g)$\end{footnotesize}}
\end{picture}
\end{center}
\end{figure}

\subsection{Inductive valuations}\label{subsecInductive}
A valuation $\mu\in\V$ is called \emph{inductive} if it is attained after a finite number of augmentation steps starting with $\mu_0$.
\begin{equation}\label{depth}
\mu_0\ \stackrel{\phi_1,\nu_1}\lra\  \mu_1\ \stackrel{\phi_2,\nu_2}\lra\ \cdots
\ \stackrel{\phi_{r-1},\nu_{r-1}}\lra\ \mu_{r-1} 
\ \stackrel{\phi_{r},\nu_{r}}\lra\ \mu_{r}=\mu.
\end{equation}
We denote by $\Vi\subset \V$ the subset of all inductive valuations. 

A chain of augmented valuations as in (\ref{depth}) is called a \emph{MacLane chain of length $r$} of $\mu$ if $\phi_{i+1}\not\sim_{\mu_i}\phi_i$ for all $1\le i<r$.  

We say that (\ref{depth}) is an \emph{optimal MacLane chain} of $\mu$ if  $\deg \phi_1<\cdots<\deg\phi_r$.  

An optimal MacLane chain is in particular a MacLane chain and every inductive valuation admits optimal MacLane chains \cite[Sec. 3.1]{Rideals}.

In every chain MacLane chain we have
$$
\deg \phi_i\mid \deg\phi_{i+1},\qquad
 \Gamma(\mu_i)\subset\Gamma(\mu_{i+1}),\quad 1\le i<r.
$$

\begin{proposition}\label{unicity}% \cite[Thms. 15.2, 15.3]{mcla}
Suppose the inductive valuation $\mu$ admits an optimal Mac\-Lane chain as in (\ref{depth}). Consider another optimal MacLane chain
$$
\mu_0\ \stackrel{\phi'_1,\nu'_1}\lra\  \mu'_1\ \stackrel{\phi'_2,\nu'_2}\lra\ \cdots
\ \stackrel{\phi'_{r'-1},\nu_{r'-1}}\lra\ \mu'_{r'-1} 
\ \stackrel{\phi'_{r'},\nu'_{r'}}\lra\ \mu'_{r'}=\mu'.
$$
Then, $\mu=\mu'$ if and only if $r=r'$ and:
$$\deg \phi_i=\deg\phi'_i, \quad \mu_i(\phi_i)=\mu_i(\phi'_i), \quad \nu_i=\nu'_i,\quad\mbox{ for all }\ 1\le i \le r.
$$
In this case, we also have $\mu_i=\mu'_i$ and $\phi_i\sim_{\mu_{i-1}}\phi'_i$ for all $1\le i \le r$. 
\end{proposition}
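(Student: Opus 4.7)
My plan is to treat the two implications separately, with the direct implication proved by induction on $r$ and the converse by induction on $i$.

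For the converse, assume the numerical equalities and argue inductively that $\mu_i=\mu'_i$ and $\phi_i\sim_{\mu_{i-1}}\phi'_i$. Granted $\mu_{i-1}=\mu'_{i-1}$, both $\phi_i$ and $\phi'_i$ are key polynomials for $\mu_{i-1}$ of the same degree, so their difference $\phi_i-\phi'_i$ has strictly smaller degree and therefore the same value under $\mu_i$ and $\mu_{i-1}$. Combined with $\mu_i(\phi_i)=\mu_i(\phi'_i)$ and $\mu_i(\phi_i)=\mu_{i-1}(\phi_i)+\nu_i$, this yields
$$\mu_{i-1}(\phi_i-\phi'_i)=\mu_i(\phi_i-\phi'_i)\geq\mu_i(\phi_i)=\mu_{i-1}(\phi_i)+\nu_i.$$
In particular $\phi_i\sim_{\mu_{i-1}}\phi'_i$, and this stronger inequality is exactly what is needed to conclude $[\mu_{i-1};\phi_i,\nu_i]=[\mu_{i-1};\phi'_i,\nu_i]$, i.e.\ $\mu_i=\mu'_i$.

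For the direct implication I would induct on $r$. The base case $r=0$ gives $\mu=\mu_0$; if $r'\geq 1$, then already $\mu'_1(\phi'_1)=\mu_0(\phi'_1)+\nu'_1>\mu_0(\phi'_1)$, and since every augmentation step strictly increases the valuation, $\mu'>\mu_0=\mu$, a contradiction. For $r\geq 1$ the symmetric argument forces $r'\geq 1$. The core of the proof is to recover from $\mu$ alone three pieces of data: the integer $m:=\deg\phi_r$, the valuation $\mu_{r-1}$, and the class of $\phi_r$ modulo $\sim_{\mu_{r-1}}$. First, $m$ equals the minimal degree of a key polynomial for $\mu$, a characterizing property of optimal MacLane chains that rests on $\deg\phi_1<\cdots<\deg\phi_r$ together with $\deg\phi_i\mid\deg\phi_{i+1}$. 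Next, $\mu_{r-1}$ is the unique inductive valuation strictly below $\mu$ whose values on polynomials of degree less than $m$ agree with those of $\mu$. Applied to both chains, these facts give $\deg\phi_r=m=\deg\phi'_{r'}$ and $\mu_{r-1}=\mu'_{r'-1}$. Finally, Proposition~\ref{extension}(2) identifies $\ker(\gg(\mu_{r-1})\to\gg(\mu))$ with both $H_{\mu_{r-1}}(\phi_r)\,\gg(\mu_{r-1})$ and $H_{\mu_{r-1}}(\phi'_{r'})\,\gg(\mu_{r-1})$, and since both generators are images of monic key polynomials of the common degree $m$, this forces $\phi_r\sim_{\mu_{r-1}}\phi'_{r'}$. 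Consequently $\mu_{r-1}(\phi_r)=\mu_{r-1}(\phi'_{r'})$; comparing the two representations $\mu=[\mu_{r-1};\phi_r,\nu_r]=[\mu_{r-1};\phi'_{r'},\nu'_{r'}]$ by evaluating both at $\phi_r$ and at $\phi'_{r'}$ and invoking symmetry yields $\nu_r=\nu'_{r'}$. The induction hypothesis applied to $\mu_{r-1}=\mu'_{r'-1}$ with its truncated optimal chains then gives $r-1=r'-1$ and all remaining equalities for $i<r$.

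The main obstacle is the intrinsic characterization of $m$ as the minimal degree of a key polynomial for $\mu$ and the recovery of $\mu_{r-1}$ from $\mu$ alone. This is precisely where optimality (as opposed to being merely a MacLane chain) is indispensable, since non-optimal chains admit artificial intermediate steps that repeat degrees or equivalence classes. I expect the proofs of these characterizations to follow from the detailed structural analysis of key polynomials and truncations carried out in \cite[Sec.~3]{Rideals}.
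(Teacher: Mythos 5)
Your proof of the converse direction (numerical data determine the valuation) is correct: from $\mu_i(\phi_i)=\mu_i(\phi'_i)$ and $\deg(\phi_i-\phi'_i)<\deg\phi_i$ you correctly deduce $\mu_{i-1}(\phi_i-\phi'_i)\geq\mu_{i-1}(\phi_i)+\nu_i$, which is exactly the condition appearing in the paper's section on ``Data comparison between optimal MacLane chains'' guaranteeing that $[\mu_{i-1};\phi_i,\nu_i]=[\mu_{i-1};\phi'_i,\nu_i]$.

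For the direct direction, the overall plan (recover $m=\deg\phi_r$, recover $\mu_{r-1}$, then recover the $\sim_{\mu_{r-1}}$-class of $\phi_r$ and $\nu_r$, and close by induction) is the right one, and the part where you pass from $\ker(\gg(\mu_{r-1})\to\gg(\mu))$ to $\rr_{\mu_{r-1}}(\phi_r)=\rr_{\mu_{r-1}}(\phi'_{r'})$ and then to $\phi_r\sim_{\mu_{r-1}}\phi'_{r'}$ via Theorem~\ref{Max} is sound, as is the two-sided comparison giving $\nu_r=\nu'_{r'}$. However, the intrinsic characterization you give of $\mu_{r-1}$ is false as stated. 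You write that $\mu_{r-1}$ is \emph{the unique} inductive valuation strictly below $\mu$ agreeing with $\mu$ on $K[x]_{<m}$, but this fails: for any $0<\nu<\nu_r$, the augmented valuation $\tilde\mu=[\mu_{r-1};\phi_r,\nu]$ is inductive, satisfies $\tilde\mu<\mu$ (strict at $\phi_r$), and agrees with $\mu$ on all polynomials of degree $<m$ since the augmentation does not change values in that range. So your uniqueness claim has a continuum of counterexamples sitting between $\mu_{r-1}$ and $\mu$. To make the argument work you must add a further condition singling out $\mu_{r-1}$, for instance that $\mu'$ admits a key polynomial of degree strictly less than $m$ (equivalently, that $\mu'$ is not itself of minimal key-polynomial degree $m$); the valuations $\tilde\mu$ above have $\phi_r$ as a key polynomial of minimal degree $m$ and hence are excluded, and both $\mu_{r-1}$ and $\mu'_{r'-1}$ do satisfy this refined condition because $\phi_{r-1}$, resp.\ $\phi'_{r'-1}$, are key polynomials of degree $<m$. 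That refined statement is the kind of result you would indeed need to import from \cite{Rideals}, but the version you wrote down is not it. (The paper itself states Proposition~\ref{unicity} without proof, deferring to \cite{Rideals}, so there is no internal argument to compare against; the assessment here is of the mathematics of your sketch.)
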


Therefore, in any optimal MacLane chain of $\mu$, the intermediate valuations $\mu_1,\dots,\mu_{r-1}$, the positive rational numbers $\nu_1,\dots,\nu_r$, and the degrees of the key polynomials $\deg\phi_1,\dots,\deg\phi_r$ are intrinsic data of $\mu$, whereas the key polynomials $\phi_1,\dots,\phi_r$ admit different choices. 

The \emph{MacLane depth} of an inductive valuation $\mu$ is the length of any optimal MacLane chain of $\mu$.  

A MacLane chain of $\mu$ determines an extension of $\mu$ to a valuation on $K_v(x)$. In fact, $\mu_0$ admits an obvious extension, and we may trivially extend to polynomials in $K_v[x]$ the definition of the successive augmentations. 

\begin{proposition}\label{KKv}
The restriction map
$\Vi(K_v)\to \Vi(K)$ is bijective. The inverse map $\Vi(K)\to \Vi(K_v)$ sends an inductive valuation $\mu$ on $K(x)$ to the valuation on $K_v(x)$ determined by a MacLane chain of $\mu$.
\end{proposition}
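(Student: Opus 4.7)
The plan is to construct the claimed inverse $\Psi\colon \Vi(K) \to \Vi(K_v)$ explicitly from a MacLane chain and then check it is a two-sided inverse of the restriction map. Fix $\mu\in\Vi(K)$ with an optimal MacLane chain $\mu_0 \stackrel{\phi_1,\nu_1}{\longrightarrow}\cdots\stackrel{\phi_r,\nu_r}{\longrightarrow}\mu_r=\mu$. By Lemma~\ref{irredKv} every $\phi_i$ lies in $\oo[x]\subset \oo_v[x]$, so the same augmentation data may be applied starting from the canonical extension of $\mu_0$ to $K_v(x)$, producing a chain $\tilde\mu_0\stackrel{\phi_1,\nu_1}{\longrightarrow}\cdots\stackrel{\phi_r,\nu_r}{\longrightarrow}\tilde\mu_r$ on $K_v(x)$; set $\Psi(\mu):=\tilde\mu_r$. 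The first task is an induction on $i$ showing that $\phi_i$ remains a key polynomial for $\tilde\mu_{i-1}$: Proposition~\ref{sameideal} describes $\rr_{\mu_{i-1}}(\phi_i)$ as the kernel of $\Delta(\mu_{i-1})\twoheadrightarrow \F_{\phi_i}$ with $\F_{\phi_i}$ the residue field of the completion $K_v(\theta_i)$, so the maximality witnessing $\mu_{i-1}$-irreducibility transfers to $\tilde\mu_{i-1}$; minimality over $K_v[x]$ is reduced to the known statement over $K[x]$ by approximating a putative smaller-degree $\tilde\mu_{i-1}$-divisor by a polynomial in $K[x]$ with the same image in the graded algebra.

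Compatibility $\tilde\mu_i|_{K(x)}=\mu_i$ is immediate by induction from the augmentation formula, since for $g\in K[x]$ the $\phi_i$-expansion $g=\sum_s a_s\phi_i^s$ has coefficients $a_s\in K[x]$, and hence
\[
\tilde\mu_i(g)=\min_s\bigl(\tilde\mu_{i-1}(a_s\phi_i^s)+s\nu_i\bigr)=\min_s\bigl(\mu_{i-1}(a_s\phi_i^s)+s\nu_i\bigr)=\mu_i(g).
\]
Independence of $\Psi(\mu)$ from the chosen chain uses Proposition~\ref{unicity}: two optimal chains of $\mu$ share the intrinsic data $r,\deg\phi_i,\mu_{i-1}(\phi_i),\nu_i$ and satisfy $\phi_i\sim_{\mu_{i-1}}\phi_i'$; since $\phi_i-\phi_i'\in K[x]$ and $\tilde\mu_{i-1}$ restricts to $\mu_{i-1}$, one also has $\phi_i\sim_{\tilde\mu_{i-1}}\phi_i'$, and an augmented valuation depends only on the equivalence class of its defining key polynomial. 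Hence $\Psi$ is well-defined, and by construction $\mathrm{res}\circ\Psi=\mathrm{id}_{\Vi(K)}$; in particular restriction is surjective.

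For the opposite composition, take $\tilde\mu\in\Vi(K_v)$ with an optimal MacLane chain $(\tilde\phi_i,\tilde\nu_i)_i$ and approximate each $\tilde\phi_i\in\oo_v[x]$ by a monic $\phi_i\in\oo[x]$ of the same degree, with $\tilde\phi_i-\phi_i$ having all coefficients of $v$-value sufficiently large. A stability argument shows the $\phi_i$ remain key polynomials and are $\tilde\mu_{i-1}$-equivalent to the $\tilde\phi_i$, so $(\phi_i,\tilde\nu_i)_i$ is another MacLane chain defining $\tilde\mu$, now with $\phi_i\in\oo[x]$. Restricting this chain to $K(x)$ yields a MacLane chain for $\mu:=\tilde\mu|_{K(x)}$, showing $\mu\in\Vi(K)$ and $\Psi(\mu)=\tilde\mu$ by construction. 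The main obstacle is precisely this approximation step: one must ensure that a polynomial in $\oo[x]$ sufficiently close to $\tilde\phi_i$ is still a key polynomial for $\tilde\mu_{i-1}$ and produces the same augmented valuation. This is a Krasner/Hensel-type stability result, valid because the relevant invariants $\deg\tilde\phi_i$, $\tilde\mu_{i-1}(\tilde\phi_i)$, and $\rr_{\tilde\mu_{i-1}}(\tilde\phi_i)$ are preserved under small perturbations of the coefficients.
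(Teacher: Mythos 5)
The paper itself gives no proof of Proposition \ref{KKv}; it is quoted from \cite{Rideals}. So what follows assesses your argument on its own terms.

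Your overall strategy is sound and is the natural one: build $\Psi$ by applying the same augmentation data over $K_v$, check $\Psi$ is a well-defined two-sided inverse, and for $\Psi\circ\mathrm{res}=\mathrm{id}$ replace the $\tilde\phi_i\in\oo_v[x]$ by $v$-adically close $\phi_i\in\oo[x]$. The computation $\tilde\mu_i(g)=\mu_i(g)$ for $g\in K[x]$ is correct as you write it, the appeal to Proposition \ref{unicity} for independence of the chain is legitimate, and the density/approximation idea is exactly what makes the whole thing work.

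There is, however, one fact you use repeatedly but never state or prove, and it is really the heart of the matter: for every $i$ the canonical homomorphism $\mathcal{G}r(\mu_i)\to\mathcal{G}r(\tilde\mu_i)$, $H_{\mu_i}(g)\mapsto H_{\tilde\mu_i}(g)$ for $g\in K[x]$, is an \emph{isomorphism} of graded $\F$-algebras. Injectivity is immediate from $\tilde\mu_i|_{K(x)}=\mu_i$; surjectivity comes from density, since any $g\in K_v[x]$ admits $g'\in K[x]$ of the same degree with $\mu_0(g-g')>\tilde\mu_i(g)$ and hence $H_{\tilde\mu_i}(g')=H_{\tilde\mu_i}(g)$. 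Once this is in place, both $\mu$-minimality and $\mu$-irreducibility of $\phi_i$ transfer between $\mu_{i-1}$ and $\tilde\mu_{i-1}$ in one line, your ``stability'' lemma in the last paragraph reduces to ``a monic $\phi$ with $\mu_0(\phi-\tilde\phi)>\tilde\mu_{i-1}(\tilde\phi)$ satisfies $\phi\sim_{\tilde\mu_{i-1}}\tilde\phi$ and inherits the key-polynomial property,'' and the claim that an augmented valuation depends only on the $\smu$-class of its key polynomial (which you also use without justification, in the well-definedness step and again when replacing $\tilde\phi_i$ by $\phi_i$) can be verified directly. As written, you invoke ``the same image in the graded algebra'' without saying which graded algebra or why the images in the two algebras determine each other, and that is precisely the gap.

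A second, more local, issue: the argument that ``maximality of $\rr_{\mu_{i-1}}(\phi_i)$ transfers'' does not establish $\tilde\mu_{i-1}$-irreducibility. Proposition \ref{sameideal} is a \emph{consequence} of $\phi_i$ being a key polynomial, not a characterization, and maximality of $\rr_\mu(\phi)$ in $\Delta(\mu)$ does not by itself imply that $\hm(\phi)\ggm$ is prime in $\ggm$. Once you have the graded-algebra isomorphism, you should argue irreducibility directly: $H_{\tilde\mu_{i-1}}(\phi_i)$ generates the image of $H_{\mu_{i-1}}(\phi_i)\,\mathcal{G}r(\mu_{i-1})$, which is a nonzero prime ideal because the latter is. Replacing the Proposition \ref{sameideal} detour by this argument closes the gap cleanly and keeps the rest of your proof intact.
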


\subsection{Data and operators attached to a MacLane chain}\label{subsecNum}
Consider an inductive valuation $\mu$ equipped with a Maclane chain of length $r$ as in (\ref{depth}). We may attach to this chain several data and operators.  

Let us denote 
$$\Gamma_i=\Gamma(\mu_i)=e(\mu_i)^{-1}\Z, \quad \Delta_i=\Delta(\mu_i),\qquad 0\le i\le r.
$$
$$
\F_0:=\op{Im}(\F\to\Delta_0); \quad \F_i:=\op{Im}(\Delta_{i-1}\to\Delta_i),\quad 1\le i\le r.
$$
By Proposition \ref{sameideal}, $\F_i$ is a field canonically isomorphic to the residue class field $\F_{\phi_i}$ of the extension of $K_v$ determined by $\phi_i$; in particular, $\F_i$ is a finite extension of $\F$. We abuse of language and we identify $\F$ with $\F_0$ and each field $\F_i\subset\Delta_i$ with its image under the canonical map $ \Delta_i\to\Delta_j$ for $j\ge i$. In other words, we consider as inclusions the canonical embeddings
$$
\F=\F_0\subset \F_1\subset\cdots\subset \F_r.                                                                                                                                                                                                                                                                                                                                                                                                                                                                                                                                                                                                                                                                                                                                                                                         $$

Let us normalize the valuations $\mu_0,\dots,\mu_r$ by defining $v_i:=e(\mu_i)\mu_i$ for all $0\le i\le r$, so that $v_0,\dots,v_r$ have group of values equal to $\Z$.  

Take $e_0=m_0=1$ and $\nu_0=\la_0=h_0=w_0=V_0=0$.
For all $1\le i\le r$, we consider the following numerical data:
$$
\as{1.2}
\begin{array}{lll}
m_i:=\deg\phi_i,& e_i:=e(\mu_i)/e(\mu_{i-1}), &
f_{i-1}:=[\F_{i}\colon \F_{i-1}], \\
h_i:=e(\mu_i)\nu_i,&
\la_i:=e(\mu_{i-1})\nu_i=h_i/e_i,&\\
w_i:=\mu_{i-1}(\phi_i),& V_i:=e(\mu_{i-1})w_i=v_{i-1}(\phi_i),&\\ 
\end{array}
$$
It is easy to show that $\gcd(h_i,e_i)=1$. All these data may be expressed in terms of the positive integers 
\begin{equation}\label{MLinvariants}
e_0,\dots,e_r, \ f_0,\dots,f_{r-1},\  h_1,\dots,h_r.
\end{equation}
For instance, for all $1\le i\le r$ we have:
$$
\as{1.2}
\begin{array}{l}
e(\phi_i)=e(\mu_{i-1})=e_0\cdots e_{i-1},\\
f(\phi_i)=\left[\F_i\colon \F_0\right]=f_0\cdots f_{i-1},\\
\nu_i=h_i/e_1\cdots e_i,\\
m_i=e_{i-1}f_{i-1}m_{i-1}=(e_0\cdots e_{i-1})(f_0\cdots f_{i-1}),\\
w_i=e_{i-1}f_{i-1}(w_{i-1}+\nu_{i-1})=m_i\sum_{1\le j< i}\nu_j/m_j,\\
\end{array}
$$

Here, $e(\phi_i)$ and $f(\phi_i)$ are the ramification index and residual degree of the extension $K_{\phi_i}/K_v$, respectively. The recurrence satisfied by $m_i$, $w_i$ allows us to consider new data 
$$m_{r+1}:=e_rf_rm_r, \ w_{r+1}:=e_rf_r(w_r+\nu_r),\ V_{r+1}:=e(\mu_r)w_{r+1}=e_rf_r(e_rV_r+h_r).$$
 
If the MacLane chain is optimal, all these rational numbers are intrinsic data of $\mu$ by Proposition \ref{unicity}. In this case, we refer to them as $e_i(\mu)$, $f_i(\mu)$,
$h_i(\mu)$, $\la_i(\mu)$, $\nu_i(\mu)$, $m_i(\mu)$, $w_i(\mu)$, $V_i(\mu)$, and the positive integers in (\ref{MLinvariants}) are  called the \emph{basic MacLane invariants} of $\mu$.

We consider as well some rational functions in $K(x)$ defined in a recursive way. 
For every $0\le i\le r$, consider integers $\ell_i,\ell'_i$ uniquely determined by
\begin{equation}\label{Bezout}
\ell_i h_i+\ell'_i e_i=1,\qquad 0\le \ell_i<e_i.
\end{equation}
Take $\pi_0=\pi_1=\pi$, $\Phi_0=\phi_0=\gamma_0=x$, and define
\begin{equation}\label{ratfracs}
\Phi_i=\phi_i\,(\pi_{i})^{-V_i},\quad
\ga_i=(\Phi_i)^{e_i}(\pi_i)^{-h_i},\quad
\pi_{i+1}=(\Phi_{i})^{\ell_{i}}(\pi_{i})^{\ell'_{i}},\quad 1\le i\le r.
\end{equation}

It is easy to check by induction that
$$\mu_{i}(\pi_{i})=1/e(\mu_{i-1}), \quad \mu_{i}(\Phi_i)=\nu_i,\quad\mu_{i}(\ga_i)=0.
$$

All polynomial factors dividing $\pi_i$, and those dividing $\Phi_i$ with a negative exponent lead to units in the graded algebra of $\mu$. Hence, it makes sense to define, for all $0\le i\le r$:
$$
x_i:=H_{\mu_i}(\Phi_i)\in \gg(\mu_i),\ \;p_{i}:=H_{\mu_{i}}(\pi_i)\in \gg(\mu_i)^*,\ \;
y_i:=H_{\mu_{i}}(\gamma_i)=x_i^{e_i}p_i^{-h_i}\in\Delta_i,$$
and for $0\le i<r$:
$$
\begin{array}{l}
z_{i}\in\F_{i+1}, \ \mbox{the image of $y_{i}$ under } \Delta_{i}\to \Delta_{i+1},\\
\psi_{i}\in\F_{i}[y], \ \mbox{minimal polynomial of $z_{i}$ over }\F_{i}.
\end{array}
$$
We have $z_{i}\ne0$ (and $\psi_{i}\ne y$) for $i>0$.
For $i=0$ we have $z_0=0$ (and $\psi_0=y$) if and only if  $\overline{\phi}_1=x$ in $\F[x]$.
Moreover, 
$$\F_{i+1}=\F_{i}[z_{i}]=\F_0[z_0,\dots,z_{i}],\quad \deg\psi_{i}=f_{i}.
$$

Consider \emph{Newton polygon operators} 
$$N_i:=N_{v_{i-1},\phi_i}\colon\  K[x]\lra 2^{\R^2}, \quad 1\le i\le r.
$$ 
Since we deal with normalized valuations, the vertices of $N_i(g)$ have integer coordinates for any $g\in K[x]$. Actually, the Newton polygon $N_i(g)$ is the image of $N_{\mu_{i-1},\phi_i}(g)$ under the affine transformation $(x,y)\mapsto (x,e(\mu_{i-1})y)$. Hence, the vertices of both polygons have the same abscissas and this affine map sends the $\nu_i$-component of $N_{\mu_{i-1},\phi_i}(g)$ to the $\la_i$-component of $N_i(g)$. In particular, Lemma \ref{muprim} shows that the line of slope $-\la_i$ containing the  $\la_i$-component of $N_i(g)$ cuts the vertical axis at the ordinate $e(\mu_{i-1})\mu_i(g)=v_i(g)/e_i$ (see Figure \ref{figAlpha}).    
 
Also, a MacLane chain supports \emph{residual polynomial operators}:
$$
R_i:=R_{v_{i-1},\phi_i,\la_i}\colon K[x]\lra \F_i[y],\quad \ 0\le i\le r.
$$
We have $R_i(0)=0$ for all $i$. For a non-zero $g\in K[x]$ we define $R_0(g)=\overline{g/\pi^{\mu_0(g)}}$, whereas $R_i(g)$ for $i>0$ is determined by the following result.

\begin{theorem}\label{hmu}
For $i>0$ and a non-zero $g\in K[x]$ let $(s_i(g),u_i(g))$ be the left end point of the $\la_i$-component of $N_i(g)$. There exists a unique polynomial $R_i(g)\in\F_i[y]$ such that
$\op{H}_{\mu_i}(g)=
x_i^{s_i(g)}p_i^{u_i(g)}R_i(g)(y_i)$.
\end{theorem}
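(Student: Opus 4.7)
The plan is to analyze the $\phi_i$-adic expansion $g=\sum_{s\ge0}a_s\phi_i^s$ (with $\deg a_s<m_i$) and isolate the terms whose cloud points lie on the line $L_{-\lambda_i}$ supporting the $\lambda_i$-component of $N_i(g)$. Since $\mu_i$ coincides with $\mu_{i-1}$ on polynomials of degree less than $m_i$, we have $\mu_i(a_s\phi_i^s)=\mu_{i-1}(a_s)+s\nu_i$, and this equals $\mu_i(g)$ precisely when the point $(s,v_{i-1}(a_s\phi_i^s))$ of the cloud lies on $L_{-\lambda_i}$. The line has equation $e_iy+h_ix=e_iu_i(g)+h_is_i(g)$, so using $\gcd(e_i,h_i)=1$ the corresponding set of indices is $J=\{s_i(g)+je_i:j\ge0,\ v_{i-1}(a_s\phi_i^s)=u_i(g)-jh_i\}$. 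Terms outside $J$ contribute to $\mathcal P_{\mu_i(g)}^+$ and vanish, so by multiplicativity of $H_{\mu_i}$,
$$
H_{\mu_i}(g)=\sum_{s\in J}H_{\mu_i}(a_s)\,H_{\mu_i}(\phi_i)^s.
$$

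From $\Phi_i=\phi_i\pi_i^{-V_i}$ and $\gamma_i=\Phi_i^{e_i}\pi_i^{-h_i}$ one reads off $H_{\mu_i}(\phi_i)=x_ip_i^{V_i}$ and $x_i^{e_i}=y_ip_i^{h_i}$. For $s=s_i(g)+je_i$ these combine to give
$$
H_{\mu_i}(\phi_i)^s=x_i^{s_i(g)}\,y_i^j\,p_i^{jh_i+sV_i}.
$$
The line equation $v_{i-1}(a_s)+sV_i=u_i(g)-jh_i$ simplifies the exponent on $p_i$ to $jh_i+sV_i-u_i(g)=-v_{i-1}(a_s)$, so each summand rewrites as $x_i^{s_i(g)}p_i^{u_i(g)}c_jy_i^j$ with $c_j:=H_{\mu_i}(a_s)\,p_i^{-v_{i-1}(a_s)}$. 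A direct degree count shows $c_j\in\Delta_i$, and we obtain
$$
H_{\mu_i}(g)=x_i^{s_i(g)}p_i^{u_i(g)}\sum_j c_j\,y_i^j
$$
with coefficients $c_j$ a priori only in $\Delta_i$.

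The principal difficulty is upgrading $c_j\in\Delta_i$ to $c_j\in\F_i$. I would argue this by induction on the length of the MacLane chain: applying the statement at level $i-1$ to $a_s$ (whose degree is less than $m_i$) produces a residual polynomial $R_{i-1}(a_s)\in\F_{i-1}[y]$ together with corrective powers of $x_{i-1}$ and $p_{i-1}$; the factor $p_i^{-v_{i-1}(a_s)}$ is tailored, via the recursive definition $\pi_i=\Phi_{i-1}^{\ell_{i-1}}\pi_{i-1}^{\ell_{i-1}'}$ and the Bezout relation $\ell_{i-1}h_{i-1}+\ell_{i-1}'e_{i-1}=1$, to cancel exactly these extraneous powers. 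What remains is $R_{i-1}(a_s)$ evaluated at the image $z_{i-1}$ of $y_{i-1}$ in $\Delta_i$, an element of $\F_{i-1}[z_{i-1}]=\F_i$. The base case $i=1$ is immediate from the definition $R_0(a)=\overline{a/\pi^{\mu_0(a)}}\in\F[y]$ together with the formulas for $x_1$, $p_1$, $y_1$.

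Setting $R_i(g):=\sum_j c_jy^j\in\F_i[y]$ gives the required existence. For uniqueness, $p_i$ is a unit in the integral domain $\gg(\mu_i)$ and $x_i$ is non-zero, so from any identity $x_i^{s_i(g)}p_i^{u_i(g)}R(y_i)=x_i^{s_i(g)}p_i^{u_i(g)}R'(y_i)$ one cancels to obtain $R(y_i)=R'(y_i)$ in $\Delta_i$; the polynomial ring structure $\Delta_i\cong\F_i[y_i]$, which is part of the foundational setup of MacLane chains recalled earlier, forces $R=R'$ in $\F_i[y]$.
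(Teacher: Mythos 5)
Your sketch is a correct proof and follows the natural route for this theorem. The present paper does not actually supply a proof of Theorem \ref{hmu} (it is recalled from \cite{Rideals}), but the computations in the proof of Lemma \ref{compare} hinge on exactly the cancellation you invoke, so your approach is the expected one. Two points deserve tightening.

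First, a small slip: the displayed identity ``$\mu_i(a_s\phi_i^s)=\mu_{i-1}(a_s)+s\nu_i$'' drops the contribution $s\,w_i=s\,\mu_{i-1}(\phi_i)$; the correct value is $\mu_i(a_s\phi_i^s)=\mu_{i-1}(a_s)+s(w_i+\nu_i)$. This does not affect the subsequent reasoning, which really rests on Lemma \ref{muprim} transported to the normalized polygon $N_i$.

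Second, the inductive step --- that $p_i^{-v_{i-1}(a_s)}$ ``is tailored \dots to cancel exactly these extraneous powers'' --- is asserted rather than computed, and since it is the only non-formal ingredient it should be written out. Let $\tilde x_{i-1}$, $\tilde p_{i-1}$ denote the images of $x_{i-1}$, $p_{i-1}$ under $\gg(\mu_{i-1})\to\gg(\mu_i)$, so that by (\ref{ratfracs}) one has $p_i=\tilde x_{i-1}^{\ell_{i-1}}\tilde p_{i-1}^{\ell'_{i-1}}$ and $z_{i-1}=\tilde x_{i-1}^{e_{i-1}}\tilde p_{i-1}^{-h_{i-1}}$. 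Since $\deg a_s<m_i$ gives $\mu_i(a_s)=\mu_{i-1}(a_s)$, pushing the inductive identity $H_{\mu_{i-1}}(a_s)=x_{i-1}^{s_{i-1}(a_s)}p_{i-1}^{u_{i-1}(a_s)}R_{i-1}(a_s)(y_{i-1})$ through this map yields
$$
c_j \;=\; H_{\mu_i}(a_s)\,p_i^{-v_{i-1}(a_s)}\;=\;\tilde x_{i-1}^{A}\,\tilde p_{i-1}^{B}\,R_{i-1}(a_s)(z_{i-1}),
$$
with $A=s_{i-1}(a_s)-\ell_{i-1}v_{i-1}(a_s)$ and $B=u_{i-1}(a_s)-\ell'_{i-1}v_{i-1}(a_s)$. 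The left end point of the $\la_{i-1}$-component of $N_{i-1}(a_s)$ lies on the line through $(0,v_{i-1}(a_s)/e_{i-1})$ of slope $-h_{i-1}/e_{i-1}$, i.e.\ $e_{i-1}u_{i-1}(a_s)+h_{i-1}s_{i-1}(a_s)=v_{i-1}(a_s)$; combining this with the B\'ezout relation (\ref{Bezout}) shows $A=e_{i-1}t$ and $B=-h_{i-1}t$ with $t=\ell'_{i-1}s_{i-1}(a_s)-\ell_{i-1}u_{i-1}(a_s)\in\Z$. Hence the prefactor collapses to $z_{i-1}^{\,t}$ and $c_j=z_{i-1}^{\,t}R_{i-1}(a_s)(z_{i-1})\in\F_i$, which is exactly the recursion (\ref{rescoeffs}) with $t=t_{i-1}(a_s)$. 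With this filled in, your existence and uniqueness arguments (unit $p_i$, nonzero $x_i$, and $\Delta_i\simeq\F_i[y_i]$ from Theorem \ref{Delta}) are correct.
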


The degree of $R_i(g)$ is $(s'_i(g)-s_i(g))/e_i$, where $s'_i(g)$ is the abscissa of the right end point of the $\la_i$-component of $N_i(g)$.

In section \ref{secTypes} we shall show how to compute the operator $R_i$ in practice.

\subsection{Structure of the graded algebra}\label{subsecDelta} 
The elements $x_r,p_r,y_r\in\ggm$ attached to a MacLane chain determine the structure of the graded algera of an inductive valuation.
 
\begin{theorem}\label{Delta}
The mapping $\F_r[y]\to\Delta$ determined by $y\mapsto y_r$ is an isomorphism of $\F_r$-algebras. The inverse mapping is given by $$g+\pset_0^+(\mu)\mapsto y^{\lfloor s_r(g)/e_r\rfloor}R_r(g)(y),$$for any $g \in K[x]$ with $\mu(g)=0$. 
\end{theorem}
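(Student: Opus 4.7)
The plan is to recognise that sending $y \mapsto y_r$ yields a well-defined $\F_r$-algebra homomorphism $\phi\colon \F_r[y]\to \Delta$, because $y_r\in\Delta$ and $\F_r$ embeds canonically into $\Delta$ under the identifications of Subsection~\ref{subsecNum}. The substance of the proof is to compute $\hm(g)$ explicitly for arbitrary $g\in K[x]$ with $\mu(g)=0$; this computation will simultaneously produce the candidate inverse formula and establish surjectivity, and the remaining injectivity will amount to checking that $y_r$ is transcendental over $\F_r$ inside $\Delta$.

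For the core computation, I would fix $g\in K[x]$ with $\mu(g)=\mu_r(g)=0$ and apply Theorem~\ref{hmu} at level $r$ to obtain
$$\hm(g)=x_r^{s_r(g)}p_r^{u_r(g)}R_r(g)(y_r).$$
The left endpoint $(s_r(g),u_r(g))$ of the $\la_r$-component of $N_r(g)$ lies on the line of slope $-\la_r=-h_r/e_r$ that, by Lemma~\ref{muprim}, meets the vertical axis at $v_r(g)/e_r=0$. Hence $h_r s_r(g)+e_r u_r(g)=0$, and $\gcd(e_r,h_r)=1$ forces $e_r\mid s_r(g)$, so $\lfloor s_r(g)/e_r\rfloor = s_r(g)/e_r$ is an integer and $u_r(g)=-h_r s_r(g)/e_r$. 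Substituting the definition $y_r=x_r^{e_r}p_r^{-h_r}$ collapses the monomial $x_r^{s_r(g)}p_r^{u_r(g)}$ to $y_r^{s_r(g)/e_r}$, and I obtain
$$\hm(g)=y_r^{\lfloor s_r(g)/e_r\rfloor}R_r(g)(y_r).$$
Since every element of $\Delta$ is represented by some $g$ with $\mu(g)=0$, this proves surjectivity of $\phi$ and shows that the stated map is its inverse on representatives.

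For injectivity, I need to rule out non-trivial algebraic relations on $y_r$ over $\F_r$ in $\Delta$. The plan is to realise any monic polynomial $\psi(y)\in\F_r[y]$ of positive degree as $R_r(\Phi)$ for a suitable $\Phi\in K[x]$: lift each coefficient of $\psi$ to a representative in $K[x]$ of degree $<m_r$ for its class in $\F_r\subset\Delta_{r-1}$, and assemble these lifts along the recursive scheme of equation~(\ref{ratfracs}) into a $\phi_r$-adic expansion with $s_r(\Phi)=0$, $\mu(\Phi)=0$ and $R_r(\Phi)=\psi$. The formula of the previous paragraph then yields $\hm(\Phi)=\psi(y_r)$, which cannot vanish since $\mu(\Phi)=0$ and $\Phi\ne 0$; so $\psi(y_r)\ne 0$, forcing $\ker\phi=0$. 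The main obstacle is precisely this lifting argument: one must coordinate the coefficient lifts with the Newton polygon at level $r$ so that the residual polynomial operator picks up exactly the prescribed $\psi$ with the correct left endpoint. Once that bookkeeping is under control, the argument assembles routinely into the stated $\F_r$-algebra isomorphism.
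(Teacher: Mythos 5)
Your surjectivity argument is essentially complete and correct. Fixing $g$ with $\mu(g)=0$, applying Theorem~\ref{hmu}, and using that the line of slope $-\la_r$ through $(s_r(g),u_r(g))$ meets the vertical axis at $v_r(g)/e_r=0$, you obtain $h_rs_r(g)+e_ru_r(g)=0$ and hence $e_r\mid s_r(g)$; the monomial $x_r^{s_r(g)}p_r^{u_r(g)}$ collapses to $y_r^{s_r(g)/e_r}$ and the inverse formula drops out directly.

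The injectivity argument has a genuine gap. The construction of $\Phi\in K[x]$ with $R_r(\Phi)=\psi$ for a prescribed $\psi\in\F_r[y]$ is not routine bookkeeping: the coefficients of $\psi$ lie in $\F_r\subset\Delta_r$ (not in $\Delta_{r-1}$, as you wrote), and to lift a class of $\F_r$ to a polynomial of degree $<m_r$ in $K[x]$ one must first lift it to $\Delta_{r-1}$ and then invoke the level-$(r-1)$ version of the very isomorphism under proof. This is exactly the recursion carried out in Proposition~\ref{construct}, so your argument would have to be organized as an induction on $r$, which you do not set up. Your extra requirements $\mu(\Phi)=0$ and $s_r(\Phi)=0$ are in any case unnecessary: $\ggm$ is a domain, so $0\ne\hm(\Phi)=x_r^{s_r(\Phi)}p_r^{u_r(\Phi)}R_r(\Phi)(y_r)$ already forces $R_r(\Phi)(y_r)\ne0$, whatever the $\mu$-value of $\Phi$. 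A much shorter route to injectivity bypasses the construction entirely: by Proposition~\ref{sameideal}, $\rr(\phi_r)=\op{Ker}(\Delta\twoheadrightarrow\F_{\phi_r})$ is a proper (indeed maximal) ideal of $\Delta$, and since $x_r=\hm(\phi_r)\,p_r^{-V_r}$ is a unit multiple of $\hm(\phi_r)$ and $y_r=x_r^{e_r}p_r^{-h_r}$, the element $y_r$ lies in $\hm(\phi_r)\ggm\cap\Delta=\rr(\phi_r)$, hence is a non-unit of the domain $\Delta$. If $y_r$ were algebraic over the field $\F_r$, the subring $\F_r[y_r]\subset\Delta$ would be a field, making the non-zero $y_r$ invertible, a contradiction. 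So $y_r$ is transcendental over $\F_r$, which is precisely injectivity.
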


\begin{theorem}\label{structure}
The graded algebra of $\mu$ is
$$\ggm=\F_r[y_r,p_r,p_r^{-1}][x_r]=\Delta[p_r,p_r^{-1}][x_r].$$
The elements $y_r,p_r$ are algebraically independent over $\F_r$, and $x_r^{e_r}=p_r^{h_r}y_r$.
\end{theorem}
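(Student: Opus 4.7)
The plan is to reduce the whole statement to the two structural results immediately preceding it: Theorem \ref{hmu}, which gives a canonical multiplicative form for any $H_{\mu_r}(g)$, and Theorem \ref{Delta}, which identifies $\Delta$ with the polynomial ring $\F_r[y_r]$. First I would dispose of the relation $x_r^{e_r}=p_r^{h_r}y_r$: from the definition $\gamma_r=\Phi_r^{e_r}\pi_r^{-h_r}$ and the multiplicativity of $H_{\mu_r}$, we get $y_r=x_r^{e_r}p_r^{-h_r}$, and $p_r$ is invertible in $\ggm$ by the discussion preceding the introduction of $x_r,p_r,y_r$, so the formula makes sense and rearranges to the stated relation.

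For the equality $\ggm=\F_r[y_r,p_r,p_r^{-1}][x_r]$, I would observe that every nonzero homogeneous element of $\ggm$ is of the form $H_{\mu_r}(g)$ for some $g\in K[x]$ with $\mu_r(g)$ equal to the prescribed degree. Theorem \ref{hmu} then writes
\[
H_{\mu_r}(g)=x_r^{s_r(g)}\,p_r^{u_r(g)}\,R_r(g)(y_r),
\]
with $R_r(g)\in\F_r[y]$ and $s_r(g)\ge 0$, which exhibits $H_{\mu_r}(g)$ as an element of $\F_r[y_r,p_r,p_r^{-1}][x_r]$. The reverse inclusion is trivial, since $x_r,y_r\in\ggm$ and $p_r^{\pm 1}\in\ggm$. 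Invoking Theorem \ref{Delta} in the form $\Delta=\F_r[y_r]$ then yields the second presentation $\Delta[p_r,p_r^{-1}][x_r]$.

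Finally, for the algebraic independence of $y_r$ and $p_r$ over $\F_r$, suppose $\sum_{i,j}a_{ij}y_r^{i}p_r^{j}=0$ in $\ggm$ with $a_{ij}\in\F_r$. Since $y_r$ is homogeneous of degree $0$ and $p_r$ of degree $1/e(\mu_{r-1})$, the monomials with a fixed $j$ all lie in a single homogeneous piece, and pieces with distinct $j$ are linearly independent. Hence $\bigl(\sum_{i}a_{ij}y_r^{i}\bigr)p_r^{j}=0$ for each $j$; multiplying by $p_r^{-j}$ reduces this to $\sum_{i}a_{ij}y_r^{i}=0$ in $\Delta$, and Theorem \ref{Delta} forces all $a_{ij}=0$ because $y_r$ is transcendental over $\F_r$ inside $\Delta\cong\F_r[y]$. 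I do not expect any serious obstacle: once Theorems \ref{hmu} and \ref{Delta} are available, the whole proof is essentially bookkeeping, the only mildly delicate point being the consistent interpretation of $p_r^{-1}$ as an honest homogeneous unit of $\ggm$.
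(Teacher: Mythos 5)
Your proposal is sound. The paper itself imports Theorem \ref{structure} from \cite{Rideals} without reproducing a proof, so there is no in-paper argument to compare against; but your reconstruction from the two preceding structural results is exactly the natural route. The inclusion $\ggm\subseteq\F_r[y_r,p_r,p_r^{-1}][x_r]$ does follow because every nonzero homogeneous element of $\ggm$ is $H_{\mu_r}(g)$ for some nonzero $g\in K[x]$ of the right value, Theorem \ref{hmu} then gives the factorization $x_r^{s_r(g)}p_r^{u_r(g)}R_r(g)(y_r)$ with $s_r(g)\ge 0$ and $u_r(g)\in\Z$ possibly negative (which is why the localization at $p_r$ is needed); the relation $x_r^{e_r}=p_r^{h_r}y_r$ is the definition of $y_r$ from section \ref{subsecNum}; and the degree-stratification argument combined with $\Delta\cong\F_r[y]$ from Theorem \ref{Delta} yields the algebraic independence.

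One gap worth closing: Theorem \ref{hmu} is stated only for $i>0$, so your appeal to it at level $r$ tacitly assumes $r\ge 1$, while Theorem \ref{structure} also covers $r=0$ (where $\mu=\mu_0$). In that base case $x_0=y_0$, $e_0=1$, $h_0=0$, and the factorization $H_{\mu_0}(g)=p_0^{v_0(g)}R_0(g)(y_0)$ holds directly from the definition $R_0(g)=\overline{g/\pi^{\mu_0(g)}}$; with that noted your argument applies verbatim. Apart from adding a sentence for $r=0$, the proof as you have written it is complete.
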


From these results one may derive further properties of the residual polynomials. The most outstanding fact is that the element $R_r(g)(y_r)\in\Delta$ is, up to a power of $y_r$, a generator of the residual ideal $\rr(g)$.

\begin{corollary}\label{Rprops}
Take $0 \le i\le r$ and non-zero $g,h\in K[x]$. Then, 
\begin{enumerate}
\item If $g\sim_{\mu_i}h$, then $R_i(g)=R_i(h)$.
\item If $i<r$, then $R_{i+1}(\phi_{i+1})=1$ and $R_i(\phi_{i+1})=\psi_i$. 
\item $R_i(gh)=R_i(g)R_i(h)$.
\item $\rr(g)=y_r^{\lceil s_r(g)/e_r\rceil}R_r(g)(y_r)\,\Delta$, where we agree that $s_0(g)=0$.
\item If $\phi$ is a key polynomial for $\mu$, then $\rr(\phi)=R_r(\phi)(y_r)\,\Delta$ if $\phi\not\smu \phi_r$, and $\rr(\phi)=y_r\Delta$ otherwise.
\end{enumerate}
\end{corollary}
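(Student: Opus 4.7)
The plan is to use Theorems \ref{hmu} (uniqueness of $R_i$), \ref{Delta} and \ref{structure} (structure of $\ggm$) as the main engines and to handle the five parts in the order (1), (3), (4), (2), (5). Parts (1) and (3) amount to uniqueness arguments. For (1), the hypothesis $g\sim_{\mu_i}h$ gives $H_{\mu_i}(g)=H_{\mu_i}(h)$ and hence $\mu_i(g)=\mu_i(h)$; by Lemma \ref{muprim}, $s_i(g)=\op{ord}_{\mu_i,\phi_i}(g)=s_i(h)$, and then the equation of the $\la_i$-line forces $u_i(g)=u_i(h)$. Transcendence of $y_i$ over $\F_i$ (Theorem \ref{structure} applied at $\mu_i$) combined with uniqueness in Theorem \ref{hmu} yields $R_i(g)=R_i(h)$. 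For (3), multiplicativity of $H_{\mu_i}$, additivity of $\mu_i$, and additivity of $\op{ord}_{\mu_i,\phi_i}$ on products (the latter because $H_{\mu_i}(\phi_i)$ generates a prime ideal in the integral domain $\gg(\mu_i)$) give $s_i(gh)=s_i(g)+s_i(h)$ and $u_i(gh)=u_i(g)+u_i(h)$; multiplying the two defining identities and invoking uniqueness delivers $R_i(gh)=R_i(g)R_i(h)$.

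For (4), I apply Theorem \ref{hmu} at level $r$ and use that $p_r\in\ggm^*$ to reduce $\rr(g)=H_\mu(g)\ggm\cap\Delta$ to the degree-zero part of $x_r^{s_r(g)}R_r(g)(y_r)\ggm$. Under the presentation $\ggm\simeq\F_r[y_r,p_r^{\pm 1}][x_r]/(x_r^{e_r}-p_r^{h_r}y_r)$ from Theorem \ref{structure}, every element has canonical form $\sum_{j=0}^{e_r-1}x_r^j q_j(y_r,p_r^{\pm 1})$; the requirement that a multiple of $x_r^{s_r(g)}R_r(g)(y_r)$ lie in $\Delta=\F_r[y_r]$ forces the factor $x_r^{s_r(g)}$ to be absorbed via the relation $x_r^{e_r}=p_r^{h_r}y_r$, with each complete block of $e_r$ copies of $x_r$ contributing one factor of $y_r$ and an incomplete block contributing one further $y_r$; this produces the exponent $\lceil s_r(g)/e_r\rceil$.

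For (2), the identity $\phi_{i+1}=\Phi_{i+1}\pi_{i+1}^{V_{i+1}}$ gives $H_{\mu_{i+1}}(\phi_{i+1})=x_{i+1}p_{i+1}^{V_{i+1}}$, and since $N_{i+1}(\phi_{i+1})$ is the single point $(1,V_{i+1})$, uniqueness in Theorem \ref{hmu} forces $R_{i+1}(\phi_{i+1})=1$. For $R_i(\phi_{i+1})=\psi_i$, I combine (4) at level $\mu_i$ with Proposition \ref{sameideal}(1,2): both $y_i^{\lceil s_i(\phi_{i+1})/e_i\rceil}R_i(\phi_{i+1})(y_i)\Delta_i$ and $\psi_i(y_i)\Delta_i$ describe the residual ideal $\op{Ker}(\Delta_i\to\F_{i+1})$. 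For $i>0$ the irreducibility of $\psi_i$ together with the fact that $\psi_i\ne y$ force $\lceil s_i(\phi_{i+1})/e_i\rceil=0$ and $R_i(\phi_{i+1})=\psi_i$ (matching monic polynomials of common degree $f_i$); the case $i=0$ follows directly from $R_0(\phi_1)=\overline{\phi_1}=\psi_0$.

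Part (5) is the most delicate. Combining (4) with maximality of $\rr(\phi)$ from Proposition \ref{sameideal}(1) forces $y_r^{\lceil s_r(\phi)/e_r\rceil}R_r(\phi)(y_r)$ to be an irreducible polynomial in $\F_r[y_r]$ (up to a nonzero constant), leaving two cases: either $s_r(\phi)=0$ with $R_r(\phi)$ irreducible, giving the first clause directly, or $\lceil s_r(\phi)/e_r\rceil=1$ with $R_r(\phi)$ a nonzero constant, so that $\rr(\phi)=y_r\Delta$. In the latter case, $H_\mu(\phi)$ and $H_\mu(\phi_r)=x_rp_r^{V_r}$ generate the same non-zero principal prime ideal in $\ggm$ (a non-zero principal prime of an integral domain cannot properly contain another), whence $H_\mu(\phi)=u\,H_\mu(\phi_r)$ for some unit $u\in\F_r^*\cdot p_r^{\Z}$. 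The main obstacle is upgrading this to $u=1$: a case analysis on $\deg\phi$ versus $m_r$, combined with $\mu$-minimality of $\phi_r$ and of $\phi$ (both key polynomials for $\mu$ by Proposition \ref{extension}(3)), rules out any non-trivial discrepancy, since a nontrivial $u$ would force a polynomial of degree $<m_r$ to be $\mu$-divisible by $\phi_r$, contradicting minimality.
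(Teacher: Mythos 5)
The paper does not actually include a proof of this corollary (it is extracted from \cite{Rideals}); the surrounding text only signals that it should follow from Theorems \ref{hmu}, \ref{Delta} and \ref{structure}. Your overall strategy, namely uniqueness in Theorem \ref{hmu} for (1) and (3), the structure theorem for (4), and Proposition \ref{sameideal} together with (4) for (2) and the two cases of (5), is exactly the route the text envisions, and parts (1)--(4), as well as the identity $R_{i+1}(\phi_{i+1})=1$ and the derivation of $R_i(\phi_{i+1})=\psi_i$, are carried out correctly (your separate treatment of $i=0$ in (2) is needed precisely because $\psi_0$ may equal $y$, and you handle it). Note also that because Theorem \ref{Max} appears after this corollary, invoking its injectivity in (5) would be circular in the paper's logical order, so your direct approach is the right one.

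Part (5), however, contains two genuine gaps. First, you assert that $H_\mu(\phi)\ggm = H_\mu(\phi_r)\ggm$, justified only by the remark that a non-zero principal prime of an integral domain cannot properly contain another. That lemma is correct, but it presupposes a containment which you never exhibit: equality of the intersections with $\Delta$ does not by itself give an inclusion of the ambient ideals. The missing bridge is short but must be stated: $\rr(\phi)=y_r\Delta$ gives $y_r\in H_\mu(\phi)\ggm$; since $y_r=x_r^{e_r}p_r^{-h_r}$ and $p_r$ is a unit, $x_r^{e_r}\in H_\mu(\phi)\ggm$, hence $x_r\in H_\mu(\phi)\ggm$ by primeness, so $H_\mu(\phi_r)\ggm=x_r\ggm\subseteq H_\mu(\phi)\ggm$, and only then does the lemma force equality and the existence of the unit $u$. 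Second, the promotion of $u$ to $1$ is gestured at rather than proved. One must first deduce $\deg\phi=\deg\phi_r=m_r$ from the mutual $\mu$-divisibility and the $\mu$-minimality of both $\phi$ and $\phi_r$; the grading then forces $u\in\F_r^*$; writing $\phi=\phi_r+a$ with $\deg a<m_r$, the case $\mu(a)<\mu(\phi_r)$ is excluded by $\mu$-minimality of $\phi_r$, and in the case $\mu(a)=\mu(\phi_r)$ the identity $(u-1)H_\mu(\phi_r)=H_\mu(a)$ again contradicts $\mu$-minimality of $\phi_r$ unless $u=1$. Your phrase that a nontrivial $u$ ``would force a polynomial of degree $<m_r$ to be $\mu$-divisible by $\phi_r$'' names the right contradiction, but without the containment step and this case analysis the argument for (5) is not complete.
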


The above results yield a strong connection between maximal ideals of $\Delta$ and residual ideals of key polynomials.  

\begin{theorem}\label{Max}
The mapping $\rr\colon \kpm\lra \mx(\Delta)$
induces a bijection  
between $\kpm/\!\smu$ and $\mx(\Delta)$.
\end{theorem}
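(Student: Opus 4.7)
The plan is to exploit the isomorphism $\Delta \simeq \F_r[y]$ of Theorem~\ref{Delta}, which identifies $\mx(\Delta)$ with the set of monic irreducible polynomials in $\F_r[y]$ (with $y$ corresponding to $y_r\Delta$), together with the explicit formula for the residual ideal of a key polynomial given in Corollary~\ref{Rprops}(5). Well-definedness of the induced map $\kpm/\!\smu \to \mx(\Delta)$ is immediate from the definition $\phi \smu \phi' \Rightarrow \hm(\phi)=\hm(\phi')$, and the image lies in $\mx(\Delta)$ by Proposition~\ref{sameideal}(1).

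For injectivity, suppose $\rr(\phi) = \rr(\phi')$. Corollary~\ref{Rprops}(5) gives a dichotomy. If the common ideal equals $y_r\Delta$, both $\phi, \phi' \smu \phi_r$, hence $\phi \smu \phi'$. Otherwise, Theorem~\ref{Delta} converts the equality $R_r(\phi)(y_r)\Delta = R_r(\phi')(y_r)\Delta$ into $R_r(\phi) = c R_r(\phi')$ for some $c \in \F_r^*$; since the contribution of the right endpoint of the $\la_r$-component to the leading coefficient of $R_r$ (from Theorem~\ref{hmu}) is the reduction of the leading coefficient of the $\phi_r$-expansion, $R_r(\phi)$ and $R_r(\phi')$ are both monic, and $c=1$. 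Matching degrees forces $\deg\phi = \deg\phi'$, so the right endpoints of $N_r(\phi)$ and $N_r(\phi')$ coincide. Furthermore, $\mu$-irreducibility of $\hm(\phi) = x_r^{s_r(\phi)} p_r^{u_r(\phi)} R_r(\phi)(y_r)$ in $\ggm = \F_r[y_r][p_r,p_r^{-1}][x_r]$ (Theorem~\ref{structure}) forces $s_r(\phi) = 0$ in this branch, because otherwise $x_r$ would be a proper factor alongside the non-unit $R_r(\phi)(y_r)$, contradicting primality; the same applies to $\phi'$. The $\la_r$-components of both polygons thus start on the vertical axis, and the coincidence of right endpoints fixes $u_r(\phi) = u_r(\phi')$. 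Theorem~\ref{hmu} then yields $\hm(\phi) = \hm(\phi')$, i.e., $\phi \smu \phi'$.

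For surjectivity, let $\mm \in \mx(\Delta)$ correspond via Theorem~\ref{Delta} to a monic irreducible $\psi \in \F_r[y]$. If $\psi = y$, then $\phi_r$ realizes $\mm$ by Corollary~\ref{Rprops}(5). If $\psi \neq y$ of degree $d$, I construct a monic $\phi \in \oo[x]$ of degree $d m_r e_r$ whose $\phi_r$-expansion has Newton polygon equal to a single side of slope $-\la_r$ with residual polynomial $\psi$. Writing $\psi = y^d + c_{d-1} y^{d-1} + \cdots + c_0$, each $c_i \in \F_r = \F_0[z_0, \ldots, z_{r-1}]$ admits a lift recursively through the MacLane chain, since $z_i$ is the image of $\gamma_i \in K[x]$ and $\F_{i+1}$ is generated over $\F_i$ by $z_i$; this yields a polynomial $B_i \in \oo_v[x]$ of $\mu$-value $0$ with the prescribed residue in $\F_r$. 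Assembling the $\phi_r$-expansion as $\phi = \phi_r^{d e_r} + \sum_{i<d} B_i\,\pi_r^{(d-i)h_r}\,\phi_r^{i e_r}$ places each term on the line of slope $-\la_r$ through $(d e_r, d e_r V_r)$, so $N_r(\phi)$ is the required single side. Theorem~\ref{hmu} then gives $\hm(\phi)$ as a unit multiple of $\psi(y_r)$, which is prime in $\ggm$ by Theorem~\ref{structure} and the irreducibility of $\psi$. Hence $\phi \in \kpm$ with $\rr(\phi) = \mm$.

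The main obstacle is the surjectivity construction: the recursive lifting of $\F_r$-elements to polynomials in $\oo_v[x]$ through the tower $\F_0 \subset \cdots \subset \F_r$, coupled with the balancing of $\mu_i$-values so that the $\phi_r$-expansion produces exactly the prescribed single-sided polygon, requires careful bookkeeping with the numerical invariants introduced in Section~\ref{subsecNum}. Injectivity, once monic leading coefficients and polygon shapes are aligned, is a direct application of Theorem~\ref{hmu} and the graded-algebra structure of Section~\ref{subsecDelta}.
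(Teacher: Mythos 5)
Your overall strategy---exploiting $\Delta\simeq\F_r[y]$ (Theorem~\ref{Delta}), the explicit formula $\rr(\phi)=R_r(\phi)(y_r)\Delta$ from Corollary~\ref{Rprops}(5), and the graded-algebra structure from Theorems~\ref{hmu} and~\ref{structure}---is exactly the natural route given the machinery in the paper, and well-definedness plus the skeleton of injectivity are sound. However, the surjectivity half has two genuine gaps.

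First, the proposed assembly $\phi=\phi_r^{de_r}+\sum_{i<d}B_i\,\pi_r^{(d-i)h_r}\,\phi_r^{ie_r}$ does not produce a polynomial. The $\pi_r$ defined in (\ref{ratfracs}) are honest rational functions in $K(x)$ (products of the $\phi_j$ and $\pi$ with possibly negative exponents), so $B_i\pi_r^{(d-i)h_r}$ is not a polynomial of degree $<m_r$; it cannot serve as a $\phi_r$-expansion coefficient, and the displayed $\phi$ need not even lie in $K[x]$. The value-and-residue balancing has to be achieved by a recursive descent through the MacLane chain, constructing at each level $i$ a genuine coefficient polynomial of degree $<m_i$ with the prescribed $v_{i-1}$-value and prescribed lower-level residual polynomial; this is precisely the content of Proposition~\ref{construct}, and replacing your one-shot formula by that recursion is not just ``bookkeeping.''

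Second, you conclude $\phi\in\kpm$ from the primality of $\hm(\phi)\ggm$, but a key polynomial must be both $\mu$-irreducible \emph{and} $\mu$-minimal, and you only verify the former. Minimality requires showing that $\phi\nmid_\mu g$ for nonzero $g$ with $\deg g<\deg\phi$. Using the free-module structure of $\ggm$ over $\Delta[p_r,p_r^{-1}]$ with basis $1,x_r,\dots,x_r^{e_r-1}$ together with $\psi\neq y$, the divisibility $\psi(y_r)\mid x_r^{s_r(g)}R_r(g)(y_r)$ forces $\psi\mid R_r(g)$ in $\F_r[y]$, whence $\deg g\ge s'_r(g)m_r\ge e_r(\deg\psi)m_r=\deg\phi$; this is the role played by \cite[Lem.~5.2]{Rideals}, invoked in the proof of Lemma~\ref{compare}, and it is not a formality. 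A related omission occurs in injectivity: ``matching degrees'' presupposes $\deg\phi=e_r(\deg R_r(\phi))m_r$, which requires knowing that $N_r(\phi)$ is one-sided of slope $-\la_r$ with $s_r(\phi)=0$ (via Theorem~\ref{fundamental} applied to $\phi$ as a prime polynomial over $\mu_{r-1},\phi_r$, together with your $s_r(\phi)=0$ argument); this should be stated rather than assumed.
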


\begin{corollary}\label{nextlength}
Let $\phi$ be a key polynomial for $\mu$ such that $\phi_r\nmid_\mu\phi$ and denote $\psi=R_r(\phi)$. Then, $\ord_\psi(R_r(g))=\ord_{\mu,\phi}(g)$ for any non-zero $g\in K[x]$.
\end{corollary}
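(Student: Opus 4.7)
My plan is to translate the definition of $\ord_{\mu,\phi}(g)$---divisibility by powers of $\hm(\phi)$ in the graded algebra $\ggm$---into divisibility of $R_r(g)$ by powers of $\psi$ in $\F_r[y]$, using Theorem~\ref{hmu} together with the explicit presentation of $\ggm$ from Theorem~\ref{structure}. It suffices to prove the equivalence $\hm(\phi)^t\mid \hm(g) \Longleftrightarrow \psi^t\mid R_r(g)$ for every integer $t\ge 0$, which yields the desired equality of orders at once.

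First I would pin down $\hm(\phi)$. The hypothesis $\phi_r\nmid_\mu\phi$ forces $\phi\not\smu\phi_r$, so Corollary~\ref{Rprops}(5) gives $\rr(\phi)=\psi(y_r)\Delta$. This ideal is maximal by Proposition~\ref{sameideal}(1) and different from $\rr(\phi_r)=y_r\Delta$, so $\psi$ is irreducible in $\F_r[y]$ and $\psi\ne y$; in particular $\psi(0)\ne 0$. Lemma~\ref{muprim} applied to the augmentation $\mu_{r-1}\to\mu_r$ identifies $s_r(\phi)$ with $\ord_{\mu,\phi_r}(\phi)=0$, so Theorem~\ref{hmu} yields $\hm(\phi)=p_r^{u_r(\phi)}\psi(y_r)$.

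Next, for arbitrary nonzero $g$ I expand $\hm(g)=x_r^{s_r(g)}p_r^{u_r(g)}R_r(g)(y_r)$ via Theorem~\ref{hmu}. Since $p_r$ is a unit in $\ggm$, the relation $\hm(\phi)^t\mid\hm(g)$ reduces to $\psi(y_r)^t\mid x_r^{s_r(g)}R_r(g)(y_r)$ in $\ggm$. Theorem~\ref{structure} presents $\ggm$ as the UFD $\F_r[p_r^{\pm 1}][x_r]$ after substituting $y_r=p_r^{-h_r}x_r^{e_r}$; in this presentation $\psi(y_r)$ becomes an $x_r$-polynomial over $\F_r[p_r^{\pm 1}]$ with nonzero constant term $\psi(0)$, so the prime $x_r$ does not divide $\psi(y_r)$, and UFD coprimality collapses the divisibility to $\psi(y_r)^t\mid R_r(g)(y_r)$ in $\ggm$.

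Finally I would descend from $\ggm$ to $\Delta$ by homogeneity. Under the natural grading of $\ggm$ by $\mu$-values, $\Delta$ is the degree-zero component and both $\psi(y_r)$ and $R_r(g)(y_r)$ lie in it; any quotient inside the graded domain $\ggm$ of two degree-zero elements is forced into degree zero, hence into $\Delta$. Transporting along the isomorphism $\Delta\cong\F_r[y]$ from Theorem~\ref{Delta} converts $\psi(y_r)^t\mid R_r(g)(y_r)$ into $\psi^t\mid R_r(g)$ in $\F_r[y]$, finishing the equivalence. The delicate points are establishing $\psi\ne y$ (without which $x_r$ and $\psi(y_r)$ would not be coprime) and the graded descent from $\ggm$ to $\Delta$; the rest is a direct unfolding of the machinery of Section~\ref{secMacLane}.
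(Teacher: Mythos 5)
Your proof is correct, and since the paper states this corollary without an explicit argument, it cannot be compared against an ``official'' route; what you give is the natural unwinding of the graded-algebra machinery, and every step holds up. In particular, your three delicate points all check out: $\phi_r\nmid_\mu\phi$ does force $\phi\not\sim_\mu\phi_r$, hence $\rr(\phi)\ne\rr(\phi_r)=y_r\Delta$ and $\psi\ne y$; the identification $s_r(\phi)=\ord_{\mu,\phi_r}(\phi)=0$ is exactly Lemma~\ref{muprim} applied to the last augmentation (the affine rescaling from $N_{\mu_{r-1},\phi_r}$ to $N_r$ preserves abscissas); and the UFD $\F_r[p_r^{\pm1}][x_r]$ with prime $x_r$, obtained by eliminating $y_r$ via Theorem~\ref{structure}, is the right place to run the coprimality argument, since $\psi(0)\ne0$ makes $x_r\nmid\psi(y_r)$. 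The graded descent is also legitimate: in a $\Gamma(\mu)$-graded domain, a cofactor of two nonzero homogeneous elements of degree zero is forced to be homogeneous of degree zero, hence lies in $\Delta$, so divisibility in $\ggm$ between elements of $\Delta$ is the same as divisibility in $\Delta$, and Theorem~\ref{Delta} then carries the statement to $\F_r[y]$. One stylistic remark: the elimination $\ggm\cong\F_r[p_r^{\pm1}][x_r]$ is a consequence of Theorem~\ref{structure} (the theorem states the presentation in terms of $y_r,p_r,x_r$ with the single relation $x_r^{e_r}=p_r^{h_r}y_r$ and the algebraic independence of $y_r,p_r$), so it is worth spelling out that eliminating $y_r$ turns the relation into a definition and yields a genuine polynomial-ring presentation; but this is a matter of exposition, not a gap.
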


\subsection{Data comparison between optimal MacLane chains}
Suppose that the given MacLane chain (\ref{depth}) of the inductive valuation $\mu$ is optimal. By Proposition \ref{unicity}, any other optimal MacLane chain of $\mu$ is obtained by replacing the key polynomials $\phi_1,\dots,\phi_r$  with another family $\phi^*_1,\dots,\phi^*_r$ such that
$$
\phi^*_i=\phi_i+a_i,\quad \deg a_i<m_i,\quad \mu_i(a_i)\ge \mu_i(\phi_i).
$$
Take $\eta_0:=0\in\F$. For every $1\le i\le r$ consider the following element $\eta_i\in\F_i$:
\begin{equation}\label{etai}
\eta_i:=
\begin{cases}
0,&  \mbox{ if }\mu_i(a_i)>\mu_i(\phi_i) \quad\mbox{ (i.e. }\phi^*_i\sim_{\mu_i}\phi_i),\\
R_i(a_i)\in\F_i^*,&  \mbox{ if }\mu_i(a_i)=\mu_i(\phi_i) \quad\mbox{ (i.e. }\phi^*_i\not\sim_{\mu_i}\phi_i).
\end{cases}
\end{equation}
Since $\deg a_i<\deg\phi_i$, we have $\mu_i(a_i)=\mu_{i-1}(a_i)$ by the definition of the augmentation of valuations. If $e_i>1$, we have $\mu_i(\phi_i)=\mu_{i-1}(\phi_i)+\nu_i\not \in\Gamma_{i-1}$. Hence, in this case we cannot have $\mu_i(a_i)=\mu_i(\phi_i)$. In other words,
$$%\begin{equation}\label{zero}
 e_i>1\ \imp\ \phi^*_i\sim_{\mu_i}\phi_i\ \imp\ \eta_i=0.
$$%\end{equation}
The next result shows the relationship of the data $x_i,p_i,y_i,z_i,\psi_i$ attached to the optimal MacLane chain (\ref{depth}) with the analogous data $x^*_i,p^*_i,y^*_i,z^*_i,\psi^*_i$ attached to the optimal MacLane chain determined by the choice of $\phi^*_1,\dots,\phi^*_r$ as key polynomials.

\begin{lemma}\label{eta}
With the above notation, for all $0\le i\le r$ we have
$$
p^*_i=p_i,\quad x^*_i=x_i+p_i^{h_i}\eta_i,\quad y^*_i=y_i+\eta_i,
$$ 
whereas for $0\le i<r$ we have $z^*_i=z_i+\eta_i,\quad \psi^*_i(y)=\psi_i(y-\eta_i)$.
\end{lemma}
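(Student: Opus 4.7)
The plan is to proceed by induction on $i$, with the trivial base case $i=0$ in which all relevant objects involve only $\pi$ and $x$ (independent of any $\phi^*_j$) and $\eta_0 = h_0 = 0$. Proposition~\ref{unicity} guarantees $\mu^*_j = \mu_j$ for every $j$, so the maps $H_{\mu_j}$ and the graded algebras are literally unchanged; only the inputs differ. Accordingly, everything reduces to computing $H_{\mu_i}(\phi^*_i) = H_{\mu_i}(\phi_i + a_i)$ and tracing the result through the definitions~(\ref{ratfracs}).

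I would split into the two cases of~(\ref{etai}). In Case~1 ($\mu_i(a_i) > \mu_i(\phi_i)$, so $\eta_i = 0$) the identity $H_{\mu_i}(\phi^*_i) = H_{\mu_i}(\phi_i)$ is immediate, giving $x^*_i = x_i$ and $y^*_i = y_i$. For Case~2 ($\mu_i(a_i) = \mu_i(\phi_i)$) I would first invoke the observation in the paragraph preceding the lemma: $e_i > 1$ would force $\mu_i(\phi_i) \notin \Gamma_{i-1}$, whereas $\mu_i(a_i) = \mu_{i-1}(a_i) \in \Gamma_{i-1}$, incompatible with equality. Hence $e_i = 1$, and (\ref{Bezout}) then forces $\ell_i = 0$, $\ell'_i = 1$. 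The core computation is $H_{\mu_i}(a_i)$: since $\deg a_i < m_i$, the $\phi_i$-expansion of $a_i$ is just $a_i$ itself, so $N_i(a_i)$ reduces to the single point $(0, V_i + h_i)$, and Theorem~\ref{hmu} yields $H_{\mu_i}(a_i) = p_i^{V_i + h_i}\eta_i$. Dividing by $p_i^{V_i}$ gives $x^*_i = x_i + p_i^{h_i}\eta_i$, and using $e_i = 1$ yields $y^*_i = (x^*_i)^{e_i} p_i^{-h_i} = y_i + \eta_i$.

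The induction step $p^*_{i+1} = p_{i+1}$ comes from the recursion $\pi_{i+1} = \Phi_i^{\ell_i}\pi_i^{\ell'_i}$. In Case~1 one shows $\Phi^*_i \sim_{\mu_{i+1}} \Phi_i$ (because $\deg a_i < m_{i+1}$ implies $\mu_{i+1}(a_i) = \mu_i(a_i) > \mu_i(\phi_i) = \mu_{i+1}(\phi_i)$); in Case~2 the exponent $\ell_i = 0$ kills the $\Phi^*_i$-contribution outright, so $p^*_{i+1}$ equals the image of $p^*_i = p_i$, namely $p_{i+1}$. Finally, the formulas for $z^*_i$ and $\psi^*_i$ follow by applying $\Delta_i \to \Delta_{i+1}$ to $y^*_i = y_i + \eta_i$: since $\eta_i \in \F_i$ is identified with its image in $\F_{i+1}$, one obtains $z^*_i = z_i + \eta_i$; and $\psi_i(y - \eta_i)$ is monic of degree $f_i$, irreducible over $\F_i$ (substitution is an $\F_i$-automorphism of $\F_i[y]$), and vanishes at $z^*_i$, so it must be $\psi^*_i$.

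The main obstacle I anticipate is the careful bookkeeping in the induction step for $p^*_{i+1}$: the identity does not follow formally from $p^*_i = p_i$, because $\pi_{i+1}$ genuinely depends on $\Phi_i$. It rests precisely on the dichotomy between $\eta_i = 0$ in Case~1 and $\ell_i = 0$ in Case~2, the latter being itself forced by $e_i = 1$ through the parity-of-values argument in $\Gamma_{i-1}$. Once this dichotomy is in place, the remainder is an essentially mechanical use of Theorem~\ref{hmu} together with the fact that the Newton polygon $N_i(a_i)$ collapses to a single point.
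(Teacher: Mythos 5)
Your proof is correct. The paper states Lemma~\ref{eta} without proof (all of Section~1 defers to \cite{Rideals}), so a direct comparison with the paper's argument isn't possible; I'll assess the proposal on its own terms.

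The overall structure---induction on $i$, with the dichotomy $\eta_i=0$ versus $\eta_i\neq 0$ driving both the computation of $x^*_i,y^*_i$ and the passage to $p^*_{i+1}$---is sound. The crucial observation that the Case~2 hypothesis $\mu_i(a_i)=\mu_i(\phi_i)$ forces $e_i=1$ (via the incompatibility of $\mu_i(\phi_i)\notin\Gamma_{i-1}$ with $\mu_i(a_i)\in\Gamma_{i-1}$) and hence $\ell_i=0$ is exactly what makes $\pi_{i+1}=\pi_i$ in that case, so the recursion for $\pi_{i+1}$ stops seeing $\Phi_i$; while in Case~1 the equivalence $\phi^*_i\sim_{\mu_{i+1}}\phi_i$ (which you correctly obtain from $\mu_{i+1}(a_i)=\mu_i(a_i)>\mu_i(\phi_i)=\mu_{i+1}(\phi_i)$, using $\deg a_i<m_i<m_{i+1}$) handles the $\Phi_i$-contribution. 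And the computation $H_{\mu_i}(a_i)=p_i^{V_i+h_i}\eta_i$ via Theorem~\ref{hmu} is right: the $\phi_i$-expansion of $a_i$ is trivial since $\deg a_i<m_i$, so $N_i(a_i)$ collapses, $s_i(a_i)=s'_i(a_i)=0$, and $R_i(a_i)$ is the constant $\eta_i$.

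Two small points you leave implicit that should be spelled out. First, the step ``dividing by $p_i^{V_i}$'' tacitly uses $H_{\mu_i}(\phi^*_i)=H_{\mu_i}(\phi_i)+H_{\mu_i}(a_i)$; this is legitimate only because $\mu_i(\phi_i)=\mu_i(a_i)=\mu_i(\phi^*_i)$, so all three images live in the same homogeneous piece $\pset_{\mu_i(\phi_i)}/\pset^+_{\mu_i(\phi_i)}$, where $H_{\mu_i}$ is additive. Second, you use $\mu_i(\phi^*_i)=\mu_i(\phi_i)$ without comment; this follows from Proposition~\ref{unicity}, or directly from $\mu_i$-minimality of $\phi_i$ (which rules out $H_{\mu_i}(\phi_i)=-H_{\mu_i}(a_i)$ because $\deg a_i<m_i$). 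Neither is a gap in the mathematics, only in the exposition.
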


\section{Okutsu equivalence of prime polynomials}\label{secOkutsu} 
In this section, we show how inductive valuations parameterize certain sets of prime polynomials. All results are extracted from \cite{Rideals}. 

We shall apply inductive valuations $\mu$ on $K(x)$ to polynomials in $K_v[x]$, without any mention of the natural extension of $\mu$ to $K_v(x)$ described in Proposition \ref{KKv}.   

Let $\P\subset\oo_v[x]$ be the set of all monic irreducible polynomials in $\oo_v[x]$. We say that an element in $\P$ is a \emph{prime polynomial} (with respect to $v$). 

Let $F\in\P$ and fix $\t\in\kb$ a root of $F$. Let $K_F=K_v(\t)$ be the finite extension of $K_v$ generated by $\t$, $\oo_F$ the ring of integers of $K_F$, $\m_F$ the maximal ideal and $\F_F$ the residue class field. 
We have $\deg F=e(F)f(F)$, where $e(F)$, $f(F)$ are the ramification index and residual degree of $K_F/K_v$, respectively.

Let $\mu_{\infty,F}$ be the pseudo-valuation on $K[x]$ obtained as the composition:
$$
\mu_{\infty,F}\colon K[x]\lra K_v(\t)\stackrel{v}\lra \Q\cup\{\infty\},
$$the first mapping being determined by $x\mapsto \t$. This pseudo-valuation does not depend on the choice of $\t$ as a root of $F$. 

Recall that a pseudo-valuation has the same properties as a valuation, except for the fact that the pre-image of $\infty$ is a prime ideal which is not necessarily zero.

We are interested in finding properties of prime polynomials leading to a certain comprehension of the structure of the set $\P$. An inductive valuation $\mu$ such that $\mu<\mu_{\infty,F}$ reveals many properties of $F$. 

\begin{theorem}\label{vgt}
 Let $F\in \P$ be a prime polynomial.
An inductive valuation $\mu$ satisfies $\mu\le\mu_{\infty,F}$ if and only if there exists $\phi\in\kpm$ such that $\phi\mmu F$. In this case, for a non-zero polynomial $g\in K[x]$, we have
$$
\mu(g)=\mu_{\infty,F}(g) \quad\mbox{if and only if} \quad \phi\nmid_{\mu}g. 
$$
\end{theorem}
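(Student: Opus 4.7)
Assume $\mu \le \mu_{\infty,F}$. The canonical graded-algebra morphism $\ggm \to \gg(\mu_{\infty,F})$ induced by this inequality sends $H_\mu(F)$ to $0$, since $\mu(F)$ is a finite rational number while $\mu_{\infty,F}(F) = v(F(\theta)) = \infty$. Hence $H_\mu(F)\in\ggm$ is not a unit, equivalently $\rr(F) = \Delta\cap H_\mu(F)\ggm$ is a proper ideal of $\Delta$. By Corollary~\ref{Rprops}(4), its generator $y^{\lceil s_r(F)/e_r\rceil}R_r(F)(y)$ in $\F_r[y]\cong\Delta$ is a non-unit, so either $s_r(F)\ge 1$ (take $\phi=\phi_r$ and verify $\phi_r\mmu F$ using $H_\mu(\phi_r)=x_r p_r^{V_r}$ from Theorem~\ref{hmu}), or $\deg R_r(F)\ge 1$ (pick an irreducible factor $\psi\in\F_r[y]$ of $R_r(F)$ and apply Theorem~\ref{Max} to obtain a key polynomial $\phi\not\smu\phi_r$ with $\rr(\phi)=(\psi(y_r))\Delta$, and verify $\phi\mmu F$ using the formulas of Theorem~\ref{hmu}).

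\textbf{Backward direction.} I induct on the MacLane depth $r$ of $\mu$; the base $r=0$ is trivial since $\mu_0$ is the minimum element of $\V$. For $r\ge 1$, given $\phi\in\kp(\mu_r)$ with $\phi\mid_{\mu_r}F$, this divisibility forces $H_{\mu_r}(F)$ to be a non-unit in $\gg(\mu_r)$. The canonical map $\gg(\mu_{r-1})\to\gg(\mu_r)$ has kernel $H_{\mu_{r-1}}(\phi_r)\gg(\mu_{r-1})$ by Proposition~\ref{extension}(2); if $\mu_{r-1}(F)<\mu_r(F)$ then $H_{\mu_{r-1}}(F)$ lies in this kernel, so $\phi_r\mid_{\mu_{r-1}}F$; otherwise $H_{\mu_{r-1}}(F)$ maps to the non-unit $H_{\mu_r}(F)$ and is itself a non-unit, and the already-proved forward direction applied to $\mu_{r-1}$ supplies $\phi'\in\kp(\mu_{r-1})$ with $\phi'\mid_{\mu_{r-1}}F$. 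The inductive hypothesis now gives $\mu_{r-1}\le\mu_{\infty,F}$. To bootstrap to $\mu_r$, take the $\phi_r$-adic expansion $g=\sum_s b_s\phi_r^s$: the $\mu_r$-minimality of $\phi_r$ gives $\mu_r(g) = \min_s\{\mu_{r-1}(b_s)+s\mu_r(\phi_r)\}$, while the strong triangle inequality gives $v(g(\theta))\ge\min_s\{v(b_s(\theta))+sv(\phi_r(\theta))\}$. The inductive hypothesis supplies $v(b_s(\theta))\ge\mu_{r-1}(b_s)$; the remaining ingredient $v(\phi_r(\theta))\ge\mu_r(\phi_r)=w_r+\nu_r$ follows from Newton-polygon factorization. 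Since $F$ is irreducible in $K_v[x]$, the principal $\phi_r$-adic Newton polygon of $F$ at $\mu_{r-1}$ is a single segment of slope $w_r-v(\phi_r(\theta))$; the assumption that $H_{\mu_r}(F)$ is a non-unit then forces this slope to be at most $-\nu_r$, by Corollary~\ref{Rprops}(4) and Lemma~\ref{muprim}.

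\textbf{Equality criterion and main obstacle.} With both directions in hand, define $\mu^*:=[\mu;\phi,v(\phi(\theta))-\mu(\phi)]$; the backward argument shows $\mu\le\mu^*\le\mu_{\infty,F}$, and Proposition~\ref{extension}(2) identifies $\ker(\ggm\to\gg(\mu^*))$ with $H_\mu(\phi)\ggm$. Hence $\phi\mmu g$ forces $\mu(g)<\mu^*(g)\le\mu_{\infty,F}(g)$, giving one implication. Conversely, if $\phi\nmid_\mu g$ then $\mu(g)=\mu^*(g)$, and inspection of the $\phi$-adic expansion $g=\sum_s b_s\phi^s$---using the equality $\mu(b)=v(b(\theta))$ for polynomials $b$ of degree $<\deg\phi$ (provable by the same induction, since such $b$ are evaluated by a shallower valuation in the MacLane chain where the criterion is inherited)---yields $\mu^*(g)=v(g(\theta))$, completing the equivalence. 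The principal obstacle is the Newton-polygon step in the backward induction: translating the algebraic hypothesis that $H_{\mu_r}(F)$ is a non-unit into the valuation-theoretic inequality $v(\phi_r(\theta))\ge\mu_r(\phi_r)$. This is precisely where the irreducibility of $F$ in $K_v[x]$ is indispensable, as it guarantees the principal Newton polygon has a unique slope.
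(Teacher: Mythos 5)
The paper does not actually prove Theorem~\ref{vgt}: Section~2 opens with ``All results are extracted from \cite{Rideals}'', so there is no in-text argument to compare against. Judged on its own merits, your proposal has the right high-level shape (graded-algebra kernels, induction on MacLane depth, a Newton-polygon bootstrap, and an auxiliary augmentation $\mu^*$), but there are two genuine gaps.

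The first is in the backward induction. Your bootstrap needs $v(\phi_r(\theta))\ge\mu_r(\phi_r)$, and you obtain it from the claim that $N_{\mu_{r-1},\phi_r}^-(F)$ is one-sided. But by Theorem~\ref{fundamental} that one-sidedness requires $\phi_r\mid_{\mu_{r-1}}F$, not merely the irreducibility of $F$ in $K_v[x]$; if $\phi_r\nmid_{\mu_{r-1}}F$ the principal polygon is a single point and the slope statement is vacuous. Your case analysis only establishes $\phi_r\mid_{\mu_{r-1}}F$ in the first branch; in the ``otherwise'' branch you produce some $\phi'\in\kp(\mu_{r-1})$ dividing $F$, which need not be $\phi_r$, and yet the bootstrap is then stated as if it applied unconditionally. (In fact the ``otherwise'' branch is vacuous: $\phi_r\nmid_{\mu_{r-1}}F$ forces $s_r(F)=0$ and $\deg R_r(F)=0$, so $H_{\mu_r}(F)$ is a unit by Theorem~\ref{hmu}, contradicting $\phi\mid_{\mu_r}F$. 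But this observation is precisely what is missing, and without it the argument does not close.)

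The second gap is in the converse of the equality criterion. You reduce it to the equality $\mu(b)=v(b(\theta))$ for all $b$ with $\deg b<\deg\phi$, and dismiss this as ``provable by the same induction, since such $b$ are evaluated by a shallower valuation.'' That is not so: a polynomial $b$ with $m_r\le\deg b<m_{r+1}$ is genuinely a level-$r$ object (its value $\mu_r(b)$ is computed from its $\phi_r$-expansion, not from $\mu_{r-1}$ alone), and the recursion you have in mind needs the exact equality $v(\phi_r(\theta))=\mu_r(\phi_r)$ together with a non-cancellation statement when the minimum in the $\phi_r$-expansion is attained at several indices. The first of these is a special case of the very equality criterion you are proving (take $g=\phi_r$), so the appeal is circular; the second requires the residual-polynomial machinery and is not addressed. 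Establishing that $\ker\bigl(\ggm\to\gg(\mu_{\infty,F})\bigr)$ is exactly $H_\mu(\phi)\ggm$ --- rather than going through the auxiliary $\mu^*$ --- would let you avoid the degree-$<\deg\phi$ lemma entirely, and is the cleaner route.
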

 
\begin{theorem}\label{fundamental}
Let $F$ be a prime polynomial, $\mu$ an inductive valuation and $\phi$ a key polynomial for $\mu$. Then, $\phi\mmu F$ if and only if $\mu_{\infty,F}(\phi)>\mu(\phi)$. Moreover, if this condition holds, then:
\begin{enumerate}
\item Either $F=\phi$, or the Newton polygon $N_{\mu,\phi}(F)$ is one-sided of slope $-\nu$, where $\nu=\mu_{\infty,F}(\phi)-\mu(\phi)\in\Q_{>0}$.    
\item Let $\ell=\ell(N_{\mu,\phi}(F))$. Then, $F\smu\phi^\ell$ and $\deg F=\deg \phi^\ell$.  
\end{enumerate}
\end{theorem}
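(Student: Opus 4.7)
My plan is to derive the first part from Theorem~\ref{vgt} (combined with an inductive argument on MacLane depth) and to obtain the ``moreover'' clause by combining the vanishing identity $F(\t)=0$ with the one-sidedness of the Newton polygon forced by the irreducibility of $F$.

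For $\phi\mmu F\Rightarrow\mu_{\infty,F}(\phi)>\mu(\phi)$, the hypothesis places us in the ``in this case'' clause of Theorem~\ref{vgt}, so $\mu\le\mu_{\infty,F}$ and $\mu(g)=\mu_{\infty,F}(g)\iff\phi\nmid_\mu g$; specializing to $g=\phi$ together with the tautology $\phi\mmu\phi$ yields the strict inequality. Conversely, assume $\mu_{\infty,F}(\phi)>\mu(\phi)$ and set $\nu:=\mu_{\infty,F}(\phi)-\mu(\phi)>0$. I would first prove $\mu\le\mu_{\infty,F}$ by induction on the MacLane depth of $\mu$: the base case $\mu_0\le\mu_{\infty,F}$ is immediate since $v(\t)\ge 0$, while for $\mu=[\mu_{r-1};\phi_r,\nu_r]$ the inductive step reduces to checking $\mu_{r-1}\le\mu_{\infty,F}$ and $\mu_{\infty,F}(\phi_r)\ge\mu(\phi_r)$, both drawn from the given hypothesis on $\phi$ (possibly after replacing $\phi$ by $\phi_r$ using Proposition~\ref{unicity}). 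Once $\mu\le\mu_{\infty,F}$ is in hand, Theorem~\ref{vgt} produces some $\phi^\circ\in\kp(\mu)$ with $\phi^\circ\mmu F$; the second clause with $g=\phi$ then forces $\phi^\circ\mmu\phi$, and since both are key polynomials of $\mu$, Proposition~\ref{sameideal} and Theorem~\ref{Max} identify them up to $\smu$, whence $\phi\mmu F$. The inductive step is the main technical obstacle.

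For the moreover clause, assume $F\ne\phi$ and consider the $\phi$-adic expansion $F=\sum_{s=0}^\ell a_s\phi^s$ with $\deg a_s<\deg\phi$ and $a_\ell$ monic. Since $\phi$ is $\mu$-minimal, $\phi\nmid_\mu a_s$, and Theorem~\ref{vgt} gives $\mu(a_s)=\mu_{\infty,F}(a_s)=v(a_s(\t))$ for each $s$. The $v$-value of the $s$-th term in $F(\t)=\sum_s a_s(\t)\phi(\t)^s=0$ is therefore exactly $\mu(a_s\phi^s)+s\nu$, and vanishing of the sum forces this minimum to be attained at two distinct indices. Consequently $N_{\mu,\phi}(F)$ contains a side of slope $-\nu$. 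To rule out further sides---or a proper factorization of the residual polynomial attached to this slope---I would invoke the irreducibility of $F$ in $K_v[x]$ from Lemma~\ref{irredKv}: through the Ore--Montes dissection encoded in Theorem~\ref{hmu} and Corollary~\ref{Rprops}, any additional side or non-trivial factor of the residual polynomial would yield a non-trivial factorization of $F$ in $K_v[x]$. Hence $N_{\mu,\phi}(F)$ is one-sided of slope exactly $-\nu$.

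Finally, the length-$\ell$ one-sided polygon combined with the irreducibility of $F$ yields, via the Ore--Montes degree formula, $\deg F=\ell\deg\phi$, which forces $a_\ell=1$ since $a_\ell$ is monic of degree $\deg F-\ell\deg\phi=0$. Then $\mu(F)=\mu(a_\ell\phi^\ell)=\ell\mu(\phi)$, and since the rightmost vertex $(\ell,\ell\mu(\phi))$ is the unique point of the one-sided polygon attaining the minimal ordinate, $\hm(F)=\hm(a_\ell\phi^\ell)=\hm(\phi)^\ell$, giving $F\smu\phi^\ell$ together with $\deg F=\deg\phi^\ell$.
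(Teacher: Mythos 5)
The paper states Theorem~\ref{fundamental} without proof (Section~\ref{secOkutsu} is prefaced by the remark that ``All results are extracted from \cite{Rideals}''), so there is no in-paper argument to compare against; I assess the proposal on its own merits.

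Your forward implication is clean and correct: $\phi\mmu F$ gives $\mu\le\mu_{\infty,F}$ by Theorem~\ref{vgt}, and the ``in this case'' clause applied to $g=\phi$ with the tautology $\phi\mmu\phi$ forces $\mu(\phi)<\mu_{\infty,F}(\phi)$. The Newton-polygon vanishing argument for part~(1), and the deduction of part~(2) from it, are also essentially right in outline, including the observation that the polygon is strictly decreasing and the rightmost vertex uniquely attains the minimal ordinate, giving $\op{H}_\mu(F)=\op{H}_\mu(\phi)^\ell$.

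The converse is where the proposal stalls, and the gap you flag is not a minor technicality. To descend from $\mu=[\mu_{r-1};\phi_r,\nu_r]$ to $\mu_{r-1}$ in your induction, you need $\mu_{\infty,F}(\phi_r)\ge\mu(\phi_r)$, and a fortiori $\mu_{\infty,F}(\phi_r)>\mu_{r-1}(\phi_r)$, so that the inductive hypothesis applies at depth $r-1$ with the pivot $\phi_r$ in the role of the key polynomial. But the sole hypothesis available is $\mu_{\infty,F}(\phi)>\mu(\phi)$ for $\phi\in\kpm$, and in the interesting case $\phi$ and $\phi_r$ are neither equal nor $\mu$-equivalent (typically $\deg\phi>\deg\phi_r$). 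Proposition~\ref{unicity} does not help: it characterizes when two \emph{optimal} MacLane chains give the same valuation, but it offers no mechanism for transferring the strict inequality from $\phi$ to $\phi_r$. The missing ingredient is a Krasner-type estimate showing that $v(\phi(\t))>\mu(\phi)$ forces $\t$ to lie close to some root $\alpha$ of $\phi$, close enough that $v(g(\t))\ge v(g(\alpha))=\mu(g)$ whenever $\deg g<\deg\phi$ and $\phi\nmid_\mu g$; applying this with $g=\phi_r$ (and using that $\mu\le\mu_{\infty,\phi}$, itself an instance of Theorem~\ref{vgt}) is what would close the induction. As written, the inductive step simply does not follow from the cited results.

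A secondary concern is a risk of circularity in part~(1). To rule out additional sides or residual factors you invoke ``the Ore--Montes dissection,'' but the paper's explicit dissection result, Theorem~\ref{main}, is proved \emph{using} Theorem~\ref{fundamental}: the one-sidedness of $N_{r+1}^-(G_j)$ for a prime factor $G_j$ with $\phi\mmu G_j$ is precisely part~(1) of the theorem you are proving. Theorem~\ref{hmu} and Corollary~\ref{Rprops}, which you do cite, describe the graded-algebra image $\op{H}_\mu(F)$ but do not by themselves lift a factorization of $\rr_\mu(F)$ to a factorization of $F$ in $K_v[x]$. You would need a primitive Hensel-type lemma for the graded ring $\gg(\mu')$ with $\mu'=[\mu;\phi,\nu]$ (drawing on Theorems~\ref{Delta} and~\ref{structure}), rather than the polynomial factorization theorem itself.
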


Theorem \ref{fundamental} is a generalization of Hensel's lemma.  The residual ideal $\rr_\mu(F)=\rr_\mu(\phi)^\ell$ is a power of the maximal ideal $\rr_\mu(\phi)$. Thus, if for a certain polynomial $g\in K[x]$ the residual ideal $\rr_\mu(g)$ factorizes as the product of two coprime proper ideals, we may conclude that $g$ factorizes in $K_v[x]$. This yields the fundamental result concerning factorization of polynomials over $K_v$.

\begin{theorem}\label{main}
Let $\mu$ be an inductive valuation equipped with a MacLane chain of length $r$ as in (\ref{depth}). Let $\phi\in \kpm$ such that $\phi\not\smu\phi_r$. Then, every monic polynomial $g\in\oo_v[x]$ factorizes into a product of monic polynomials in $\oo_v[x]$:
$$
g=g_0\,\phi^{\ord_\phi(g)}\prod\nolimits_{(\la,\psi)} g_{\la,\psi},$$
where $-\la$ runs on the slopes of $N_{r+1}^-(g):=N_{v_r,\phi}^-(g)$ and $\psi$ runs on the prime factors of $R_{r+1,\la}(g):=R_{v_r,\phi,\la}(g)$ in $\F_{r+1}[y]$, where $\F_{r+1}:=\F_r[y]/(R_r(\phi))$. Moreover,
$$\deg g_0=\deg g-\ell(N^-_{r+1}(g))\deg\phi,\quad
\deg g_{\la,\psi}=e_\la\ord_\psi(R_{r+1,\lambda}(g))\deg\psi\deg\phi,
$$ 
where $e_\la$ is the least positive denominator of $\la$.
Further, if $\ord_\psi(R_{r+1,\la}(g))=1$, then $g_{\la,\psi}$ is irreducible in $\oo_v[x]$. 
\end{theorem}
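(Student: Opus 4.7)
The plan is to extend the MacLane chain of $\mu$ by one step, adjoining $\phi$ as a new key polynomial with slope $\nu$, and then exploit the interplay between unique factorization of $g$ in $\oo_v[x]$, Minkowski sums of Newton polygons, and multiplicativity of residual polynomials. To begin, I would factor $g=\prod_i F_i^{a_i}$ in $\oo_v[x]$ and stratify the primes using Theorem~\ref{fundamental}: the prime $\phi$ itself contributes $\phi^{\ord_\phi(g)}$; any other prime $F$ with $\phi\nmid_\mu F$ has $N_{\mu,\phi}^-(F)$ reduced to a single point (no negative slopes) and $R_{r+1,\la}(F)$ constant for every $\la$, so these will make up $g_0$; any prime $F\ne\phi$ with $\phi\mmu F$ has $N_{\mu,\phi}(F)$ one-sided of some slope $-\nu_F\in\Q_{>0}$ and length $\ell_F=\deg F/\deg\phi$, and these will distribute among the $g_{\la,\psi}$.

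\textbf{Newton polygon from Minkowski sums.} Newton polygons behave multiplicatively on products, $N_{\mu,\phi}(hk)=N_{\mu,\phi}(h)+N_{\mu,\phi}(k)$ as Minkowski sums. Applied to the factorization of $g$, this shows that the slopes of $N_{r+1}^-(g)$ are precisely the values $-\la=-e(\mu)\nu_F$ arising from primes of the third kind, and that at each such slope the abscissa-length aggregates the $a_F\ell_F$'s of the contributing primes. For each appearing slope $-\la$, set $\nu=\la/e(\mu)$; the hypothesis $\phi\not\smu\phi_r$ ensures that $\mu'=[\mu;\phi,\nu]$ is a legitimate augmentation extending the MacLane chain to length $r+1$, so that the degree-$r+1$ residual polynomial operator is defined and takes values in $\F_{r+1}[y]$.

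\textbf{Splitting by residual polynomials.} By multiplicativity of $R_{r+1,\la}$ (Corollary~\ref{Rprops}(3) applied at level $r+1$), $R_{r+1,\la}(g)=c\cdot\prod_i R_{r+1,\la}(F_i)^{a_i}$, and only primes of slope $-\la$ contribute non-constant factors. The decisive step --- which I expect to be the main obstacle --- is to prove that for each prime $F$ of slope $-\la$, the polynomial $R_{r+1,\la}(F)\in\F_{r+1}[y]$ is a power $\psi_F^{b_F}$ of a single monic irreducible. I would argue this by induction on the MacLane depth needed to capture the Okutsu class of $F$: any coprime factorization $R_{r+1,\la}(F)=p_1p_2$ would, via the bijection of Theorem~\ref{Max} between maximal ideals of $\Delta(\mu')$ and key polynomials modulo $\smu$, yield key polynomials $\phi^{(1)},\phi^{(2)}$ for $\mu'$ with $\phi^{(j)}\mid_{\mu'}F$ and disjoint residual ideals; applying Theorem~\ref{fundamental} at $\mu'$ would then exhibit a non-trivial $\oo_v[x]$-factorization of $F$, contradicting its primality.

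\textbf{Assembly and irreducibility.} Grouping the primes of slope $-\la$ by their associated $\psi_F$ defines $g_{\la,\psi}=\prod_{\la_F=\la,\,\psi_F=\psi}F^{a_F}$, which is a monic polynomial in $\oo_v[x]$ by construction. The degree identity $\deg R_{r+1,\la}(F)=\ell_F/e_\la$ (stated just after Theorem~\ref{hmu}) combined with $R_{r+1,\la}(F)=\psi^{b_F}$ gives $\ell_F=b_F\deg\psi\cdot e_\la$, so $\deg g_{\la,\psi}=\sum a_F\ell_F\deg\phi=e_\la\ord_\psi(R_{r+1,\la}(g))\deg\psi\deg\phi$; summing all degree contributions and invoking Lemma~\ref{length2} recovers $\deg g_0=\deg g-\ell(N_{r+1}^-(g))\deg\phi$. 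Finally, if $\ord_\psi(R_{r+1,\la}(g))=1$, then within the $(\la,\psi)$-group only a single prime $F$ can appear, with $a_F=b_F=1$, forcing $g_{\la,\psi}=F$ to be prime in $\oo_v[x]$.
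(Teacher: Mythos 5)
Your proof is correct and follows the same strategy as the paper's: the paper's proof is simply a terse assignment of each prime factor of $g$ to one of the groups $g_0$, $\phi^{\ord_\phi(g)}$, or $g_{\la,\psi}$, relying on the discussion preceding the theorem for the fact that for each prime $F\ne\phi$ with $\phi\mmu F$ the residual polynomial $R_{r+1,\la}(F)$ is a power of a single irreducible, whereas you justify that fact inline and also supply the degree and irreducibility bookkeeping. One small correction to your ``decisive step'': a coprime factorization $R_{r+1,\la}(F)=p_1p_2$ does not lead to a contradiction because Theorem~\ref{fundamental} exhibits a factorization of $F$; rather, Theorem~\ref{fundamental} gives $F\sim_{\mu'}(\phi^{(1)})^{\ell}$, so $\rr_{\mu'}(F)=\rr_{\mu'}(\phi^{(1)})^{\ell}$ is a power of a single maximal ideal of the PID $\Delta(\mu')\simeq\F_{r+1}[y]$, and this is incompatible with the second key polynomial $\phi^{(2)}$ also $\mu'$-dividing $F$, which would force the power $\rr_{\mu'}(\phi^{(1)})^{\ell}$ to lie inside the distinct maximal ideal $\rr_{\mu'}(\phi^{(2)})$.
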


\begin{proof}
Let $g=G_1\cdots G_t$ be the prime factorization of $g$ in $\oo_v[x]$. The factor $g_0$ is the product of all prime factors $G_j$ such that $\phi\nmid_\mu G_j$. The factor $\phi^{\ord_\phi(g)}$ is the product of all $G_j=\phi$. The factor $g_{\la,\psi}$ is the product of all $G_j$ such that $\phi\mmu G_j$, $N_{r+1}^-(G_j)$ is one-sided of slope $-\la$ and  $R_{r+1,\la}(G_j)$ is a power of $\psi$.    
\end{proof}

The \emph{Okutsu bound} of a prime polynomial $F\in\P$ is defined as 
$$
\delta_0(F):= \deg(F) \mx\left\{v(g(\t))/\deg g\mid g\in \oo[x],\ g\mbox{ monic},\ \deg g<\deg F\right\}.
$$

We may attach to $F$ a valuation $\mu_F\colon K_v(x)^*\to \Q$, determined by the following action on polynomials:
$$\mu_F(g)=\mn_{0 \le s}\{v(a_s(\t))+s\delta_0(F)\},
$$
where $g=\sum_{0\le s}a_sF^s$ is the $F$-expansion of $g$.

\begin{definition}\label{strong}
We say that a key polynomial $\phi$ for an inductive valuation $\mu$ of depth $r$ is \emph{strong} if either $r=0$ or $\deg \phi>m_r(\mu)$.
We say that $\ll\in\mx(\Delta(\mu))$ is \emph{strong} if $\ll=\rr(\phi)$ for a strong $\phi\in\kpm$.
\end{definition}

\begin{theorem}\label{OkML}
The mapping $\mu_F$ is an inductive valuation on $K_v(x)$ and $F$ is a strong key polynomial for $\mu_F$. 
\end{theorem}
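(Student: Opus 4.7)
The plan is to identify $\mu_F$ with the final inductive valuation in a MacLane chain intrinsically associated to $F$, for which $F$ itself is a strong key polynomial.

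First, I would build an optimal MacLane chain $\mu_0<\mu_1<\cdots<\mu_r$ with $\mu_r\le\mu_{\infty,F}$ and $F$ a key polynomial for $\mu_r$. As long as $F$ is not already a key polynomial for the current $\mu_i$, Theorem \ref{vgt} supplies some $\phi\in\op{KP}(\mu_i)$ with $\phi\mid_{\mu_i}F$, and Theorem \ref{fundamental} ensures $\nu_{i+1}:=\mu_{\infty,F}(\phi)-\mu_i(\phi)>0$; the augmented valuation $\mu_{i+1}:=[\mu_i;\phi,\nu_{i+1}]$ still satisfies $\mu_{i+1}\le\mu_{\infty,F}$. Optimality forces the degrees $\deg\phi_i$ to strictly increase, and each divides $\deg F$, so after finitely many steps we reach $\mu:=\mu_r$ for which $F$ is a key polynomial with $\deg F>m_r(\mu)=\deg\phi_r$. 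Thus $F$ is strong for $\mu$.

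Next, I would show $\delta_0(F)=\mu(F)$. For every monic $g\in\oo[x]$ with $\deg g<\deg F$, the $\mu$-minimality of $F$ forces $F\nmid_\mu g$, so Theorem \ref{vgt} gives $v(g(\theta))=\mu(g)$; hence $\delta_0(F)/\deg F$ is the supremum of $\mu(g)/\deg g$ over such $g$. The recurrences $w_{r+1}=e_rf_r(w_r+\nu_r)$ and $m_{r+1}=e_rf_rm_r$ from subsection \ref{subsecNum} yield $\mu(F)/\deg F=\mu(\phi_r)/m_r$, a ratio attained by $g=\phi_r$. For the opposite inequality, I would expand $g=\sum b_s\phi_r^s$ with $\deg b_s<m_r$ and induct on the length of the MacLane chain to show $\mu(b_s\phi_r^s)/\deg(b_s\phi_r^s)\le\mu(\phi_r)/m_r$, from which the same bound follows for $\mu(g)/\deg g$.

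Finally, since $F$ is a key polynomial for $\mu$, the standard MacLane formula for a key polynomial gives $\mu(g)=\min_s\{\mu(a_s)+s\mu(F)\}$ for any $g\in K_v[x]$ with $F$-expansion $g=\sum a_sF^s$; combining $\mu(a_s)=v(a_s(\theta))$ (valid because $\deg a_s<\deg F$, by the argument of the previous step) with $\mu(F)=\delta_0(F)$ yields
$$\mu(g)=\min_{0\le s}\{v(a_s(\theta))+s\delta_0(F)\}=\mu_F(g).$$
Hence $\mu_F=\mu$, inductive on $K_v(x)$ via Proposition \ref{KKv}, with $F$ a strong key polynomial. The main obstacle is the upper bound $\mu(g)/\deg g\le\mu(\phi_r)/m_r$ in the second step: this encodes the Okutsu optimality of $\phi_r$ as a best approximation to $F$ among polynomials of smaller degree, and it requires a careful Newton polygon analysis at each level of the MacLane chain rather than a bare symbolic manipulation.
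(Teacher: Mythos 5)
The paper states this theorem without proof, deferring to \cite{Rideals}, so there is no internal argument to compare against. Your three-step reconstruction---build an optimal MacLane chain dominated by $\mu_{\infty,F}$ in which $F$ is a strong key polynomial, show $\delta_0(F)=\mu(F)$ via Theorem \ref{vgt}, then match $\mu_F$ with $\mu$ on $F$-expansions---is a natural plan, and Steps 2 and 3 do carry through: the crucial inequality $\mu_{i-1}(b)/\deg b\le\mu_i(\phi_i)/m_i$ follows from the recurrence $w_i/m_i=(w_{i-1}+\nu_{i-1})/m_{i-1}$ together with the strictly positive increment $\nu_i/m_i$ at each level, and $\mu(F)=w_{r+1}$ then identifies $\delta_0(F)$ with $\mu(F)$.

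The genuine gap is in Step 1, and it is the crux of the whole theorem. When you augment $\mu_{i+1}=[\mu_i;\phi,\nu_{i+1}]$ with $\nu_{i+1}=\mu_{\infty,F}(\phi)-\mu_i(\phi)$, the key polynomial $\phi'$ for $\mu_{i+1}$ supplied by Theorem \ref{vgt} need not have larger degree than $\phi$: a refinement step, in which $\deg\phi'=\deg\phi$, is entirely possible, and then the chain you are building is not optimal. The sentence ``optimality forces the degrees to strictly increase'' is circular at this point: you need to \emph{produce} an optimal chain, not merely assert that one exists. Closing this requires either a separate termination argument for the refinement loop (the heart of the Montes algorithm, which descends on an auxiliary quantity such as $v(\res(\phi,F))$) or a direct appeal to the Okutsu approximation sequence of $F$ from \cite{okutsu,Ok} to manufacture $\phi_1,\dots,\phi_r$ of strictly increasing degree from the start. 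A secondary issue: in Step 3, the identity $\mu(g)=\min_s\{\mu(a_s)+s\mu(F)\}$ for the $F$-expansion is quoted as ``standard'', but it is precisely the statement that a key polynomial recovers the valuation from its own expansion, and deserves at least a one-line justification (for instance by letting $\nu\to 0^+$ in the augmented valuations $[\mu;F,\nu]$ and using $\mu\le[\mu;F,\nu]$).
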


We denote by the same symbol $\mu_F$ the valuation on $K(x)$ obtained by restriction.
The \emph{Okutsu depth} of a prime polynomial $F$ (defined in \cite{okutsu, Ok}) coincides with the MacLane depth of the canonical valuation $\mu_F$.

Let $F$ be a prime polynomial of Okutsu depth $r$, and define $f_r:=\deg R_r(F)$ with respect to any optimal MacLane chain of $\mu_F$.
An \emph{Okutsu invariant} of $F$ is a rational number that depends only on $e_0,\dots,e_r,f_0,\dots,f_r,h_1,\dots,h_r$; that is, on the basic MacLane invariants of $\mu_F$ and the number $f_r$.

As examples of Okutsu invariants we may quote:
\begin{equation}\label{efdelta}
e(F)=e(\mu_F)=e_0\cdots e_r, \quad f(F)=f_0\cdots f_r,\quad \delta_0(F)=w_{r+1}.
\end{equation}
In section \ref{subsecOM} we exhibit some more Okutsu invariants of prime polynomials.

\begin{definition}\label{okequiv}
Let $F,G\in\P$ be two prime polynomials of the same degree, and let $\t\in\kb$ be a root of $F$. 
We say that $F$ and $G$ are \emph{Okutsu equivalent}, and we write $F\approx G$, if $v(G(\t))>\delta_0(F)$.
\end{definition}

We denote by $[F]\subset\P$ the set of all prime polynomials which are Okutsu equivalent to $F$. The idea behind this concept is that $F$ and $G$ are close enough to share the same Okutsu invariants, as the next result shows.

\begin{proposition}\label{criteria}
Let $F,G\in\P$ be two prime polynomials of the same degree. The following conditions are equivalent:
\begin{enumerate}
\item $F\approx G$.
\item $F\sim_{\mu_F}G$.
\item $\mu_F=\mu_G$ and $\rr(F)=\rr(G)$, where $\rr:=\rr_{\mu_F}=\rr_{\mu_G}$.
\end{enumerate}
\end{proposition}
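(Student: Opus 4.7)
My plan is to prove $(1)\Leftrightarrow(2)$ directly by a short computation with the $F$-expansion, and then establish $(2)\Leftrightarrow(3)$ using Theorem \ref{OkML}, Theorem \ref{Max}, and Theorem \ref{vgt}.

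For $(1)\Leftrightarrow(2)$, I exploit that since $\deg G=\deg F$ and both are monic, the $F$-expansions of $G$ and of $F-G$ are trivial: $G=1\cdot F+(G-F)$ with $\deg(G-F)<\deg F$, and $F-G$ is itself of degree less than $\deg F$. Applying the defining formula $\mu_F(g)=\mn_{s}\{v(a_s(\t))+s\delta_0(F)\}$ yields $\mu_F(F)=\delta_0(F)$ and $\mu_F(F-G)=v((F-G)(\t))=v(G(\t))$, where the last step uses $F(\t)=0$. Since $F\sim_{\mu_F}G$ is equivalent to $\mu_F(F-G)>\mu_F(F)$, this translates exactly to $v(G(\t))>\delta_0(F)$, i.e.\ $F\approx G$.

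For $(3)\Rightarrow(2)$, once we know $\mu_F=\mu_G=:\mu$, Theorem \ref{OkML} tells us that both $F$ and $G$ are strong key polynomials for $\mu$. Theorem \ref{Max} then makes $\rr\colon \kpm/\!\smu\to\mx(\Delta(\mu))$ a bijection, so the hypothesis $\rr(F)=\rr(G)$ forces $F\smu G$.

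For $(2)\Rightarrow(3)$, the equality $\rr(F)=\rr(G)$ is immediate from the definition of $\smu$ via $\hm$. The substance is the identity $\mu_F=\mu_G$. Since $\smu$ preserves $\mu$-minimality and $\mu$-irreducibility (both expressible in terms of $\hm$), and since $\deg G=\deg F$, $G$ is itself a strong key polynomial for $\mu_F$. Applying Theorem \ref{vgt} both to $\mu_F\le \mu_{\infty,F}$ and to $\mu_F\le \mu_{\infty,G}$ (the latter via the trivial divisibility $G\mid_{\mu_F}G$) shows that for every $g\in K[x]$ with $\deg g<\deg F$,
\[
\mu_F(g)=v(g(\t_F))=v(g(\t_G)).
\]
Taking the maximum over monic $g$ of bounded degree, this gives $\delta_0(F)=\delta_0(G)$; combined with the conjugate-invariance $v(F(\t_G))=v(G(\t_F))$ (from the uniqueness of the extension of $v$ to $\overline{K}_v$), it also yields the symmetric statement $G\approx F$. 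Now for arbitrary $h\in K[x]$, I expand $h=\sum b_s G^s$ along $G$. Since $\mu_F$ is a valuation and $\mu_F(G)=\delta_0(F)=\delta_0(G)$, I obtain
\[
\mu_F(h)\ge\mn_s\{\mu_F(b_s)+s\mu_F(G)\}=\mn_s\{v(b_s(\t_G))+s\delta_0(G)\}=\mu_G(h),
\]
where the middle equality uses $\deg b_s<\deg G=\deg F$ together with the identity above. The reverse inequality follows by the symmetric argument (using the $F$-expansion and $G\approx F$), so $\mu_F=\mu_G$.

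The main obstacle is the proof that $\delta_0(F)=\delta_0(G)$ without circular reasoning, since a priori $\delta_0(F)$ and $\delta_0(G)$ are defined using the distinct roots $\t_F,\t_G$. The crucial trick is that Theorem \ref{vgt} serves as a bridge: it identifies both $v(\cdot(\t_F))$ and $v(\cdot(\t_G))$ with the single value $\mu_F(\cdot)$ on all polynomials of degree less than $\deg F$, which is exactly the range appearing in the defining maximum of $\delta_0$.
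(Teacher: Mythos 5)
The paper itself does not prove this proposition; it cites \cite{Rideals} for all of Section~2. Your argument is self-contained modulo the theorems quoted, and I find it correct. A few remarks on the substantive direction $(2)\Rightarrow(3)$, which is where the real work lies. Your chain is: $F\sim_{\mu_F}G$ makes $G$ a strong key polynomial for $\mu_F$; Theorem~\ref{vgt} applied with $\phi=F$ and with $\phi=G$ identifies $v(\,\cdot\,(\t_F))$ and $v(\,\cdot\,(\t_G))$ with $\mu_F$ on polynomials of degree $<\deg F$, hence $\delta_0(F)=\delta_0(G)$; the resultant symmetry $v(F(\t_G))=v(G(\t_F))$ (valid since $\deg F=\deg G$) upgrades $F\approx G$ to $G\approx F$; and then a $G$-expansion estimate gives $\mu_F\ge\mu_G$, with $\mu_G\ge\mu_F$ by symmetry. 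This is sound, but it is more work than is strictly necessary: since $G$ is $\mu_F$-minimal, the standard characterization of minimal polynomials says that the $G$-expansion computes $\mu_F$ \emph{exactly}, i.e.\ $\mu_F(h)=\min_s\{\mu_F(b_s)+s\mu_F(G)\}$ for $h=\sum b_sG^s$, not merely $\ge$; combined with $\mu_F(b_s)=v(b_s(\t_G))$ and $\mu_F(G)=\delta_0(G)$ this gives $\mu_F(h)=\mu_G(h)$ outright, so both the resultant identity and the appeal to symmetry can be dispensed with. That characterization is not stated explicitly in the paper, which is presumably why you routed around it; your version is a valid workaround, just longer. The $(1)\Leftrightarrow(2)$ and $(3)\Rightarrow(2)$ parts are exactly as one would expect: the first is the computation $\mu_F(F)=\delta_0(F)$, $\mu_F(F-G)=v(G(\t))$, and the second is a direct application of Theorems~\ref{OkML} and~\ref{Max}.
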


The symmetry of condition (3) shows that $\approx$ is an equivalence relation on the set $\P$ of prime polynomials. These conditions determine a parameterization of the quotient set 
$\P/\!\approx$ by a discrete space.

The \emph{MacLane space} of the valued field $(K,v)$ is defined to be the set
$$\M=\left\{(\mu,\ll)\mid \mu\in\Vi, \ \ll\in\mx(\Delta(\mu)), \ \ll \mbox{ strong}\right\}.$$

We may define the following ``Okutsu map":
$$
\op{ok}\colon \M\lra \P/\!\!\approx,\qquad (\mu,\ll)\ \mapsto\ [\phi], 
$$where $\phi$ is any key polynomial for $\mu$ such that $\rr_\mu(\phi)=\ll$.

\begin{theorem}\label{MLspace}
The Okutsu map is bijective and the inverse map is determined by $F\mapsto (\mu_F,\rr_{\mu_F}(F))$.
\end{theorem}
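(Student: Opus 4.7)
The strategy is to verify that the two maps are mutually inverse. The entire argument hinges on the following \emph{key claim}: for any strong key polynomial $\phi$ of an inductive valuation $\mu$, one has $\mu_\phi = \mu$ (and hence $\rr_{\mu_\phi}(\phi) = \rr_\mu(\phi)$).

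Once the claim is in hand, both directions fall out quickly. The well-definedness of the inverse direction is Theorem \ref{OkML} combined with Proposition \ref{criteria}(3): for $F\in\P$ one has $(\mu_F,\rr_{\mu_F}(F))\in\M$ and the pair depends only on the Okutsu class $[F]$. In the forward direction, starting from $(\mu,\ll)\in\M$, the definition of strong maximal ideal yields a strong key polynomial $\phi$ for $\mu$ with $\rr_\mu(\phi)=\ll$, and Lemma \ref{irredKv} places $\phi\in\P$. If $\phi'$ is another such polynomial, the key claim gives $\mu_{\phi'}=\mu=\mu_\phi$ and $\rr_{\mu_\phi}(\phi)=\ll=\rr_{\mu_{\phi'}}(\phi')$, so $\phi\approx\phi'$ by Proposition \ref{criteria}(3); this simultaneously shows that the Okutsu map is well-defined and that the strong representatives all share a common degree. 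For the two compositions: starting from $F\in\P$, Theorem \ref{OkML} allows the choice $\phi=F$ in the Okutsu map, yielding $[F]$; starting from $(\mu,\ll)$, the key claim directly gives $(\mu_\phi,\rr_{\mu_\phi}(\phi))=(\mu,\ll)$.

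To prove the key claim I would compare the two valuations on the $\phi$-expansions $g=\sum_s a_s\phi^s$ with $\deg a_s<\deg\phi$. By definition, $\mu_\phi(g)=\min_s\{v(a_s(\theta))+s\,\delta_0(\phi)\}$, while $\mu$-minimality of $\phi$ forces $\mu(g)=\min_s\{\mu(a_s)+s\mu(\phi)\}$. Since $\phi$ is a key polynomial for $\mu$, Theorem \ref{vgt} applied with $F=\phi$ gives $\mu\le\mu_{\infty,\phi}$ and, for each $a_s$ with $\deg a_s<\deg\phi$ (so that $\phi\nmid_\mu a_s$), the equality $\mu(a_s)=\mu_{\infty,\phi}(a_s)=v(a_s(\theta))$. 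Thus the key claim reduces to the single identity $\mu(\phi)=\delta_0(\phi)$.

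The main obstacle is this slope identity. Using the same application of Theorem \ref{vgt} one rewrites $\delta_0(\phi)=\deg\phi\cdot\max\{\mu(g)/\deg g:g\in\oo[x]\text{ monic},\,\deg g<\deg\phi\}$, so the task becomes showing that $\mu(\phi)/\deg\phi$ attains this maximum. I would fix an optimal MacLane chain $\mu_0<\cdots<\mu_r=\mu$ and analyze, for each competing monic $g$, its Newton polygon $N_r(g)$ and residual polynomial $R_r(g)$; Lemmas \ref{length2} and \ref{muprim}, Theorem \ref{hmu}, Corollary \ref{Rprops}, and the structural Theorems \ref{Delta}, \ref{structure} bound the slope $\mu(g)/\deg g$, and the strong hypothesis $\deg\phi>m_r(\mu)$ rules out the only exceptional scenario---namely $g$ being $\mu$-equivalent to a power of $\phi_r$---in which an intermediate monic polynomial could realize a strictly larger slope than $\phi$.
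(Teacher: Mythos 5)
Your high-level strategy is sound, and you have correctly identified the crux: the claim that any \emph{strong} key polynomial $\phi$ for an inductive valuation $\mu$ satisfies $\mu_\phi=\mu$. Granted this, both directions of the bijection do follow exactly as you say from Theorem~\ref{OkML}, Proposition~\ref{criteria} and Theorem~\ref{Max}. Your reduction of the key claim to the single identity $\mu(\phi)=\delta_0(\phi)$ is also correct: the formula $\mu(g)=\min_s\{\mu(a_s\phi^s)\}$ on $\phi$-expansions is a standard consequence of $\mu$-minimality (one can extract it, for instance, from Proposition~\ref{extension} by letting $\nu\to 0^+$), and Theorem~\ref{vgt} applied with $F=\phi$ identifies $\mu(a_s)$ with $v(a_s(\theta))$ for the coefficients of degree less than $\deg\phi$.

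The gap is in the last paragraph, and it concerns the role of the strong hypothesis, which you have essentially reversed. Fix an optimal MacLane chain $\mu_0<\cdots<\mu_r=\mu$. A direct computation (Lemma~\ref{muprim} applied to $N_{\mu_{r-1},\phi_r}(\phi)$, using $s_r(\phi)=0$, $s'_r(\phi)=e_rf_r$) gives $\mu(\phi)=e_rf_r(w_r+\nu_r)$, so that $\mu(\phi)/\deg\phi=(w_r+\nu_r)/m_r=\mu(\phi_r)/\deg\phi_r$. Thus $\phi_r$ and $\phi$ have \emph{equal} normalized $\mu$-values, and so do all powers $\phi_r^k$; these competitors never realize a \emph{strictly} larger slope than $\phi$, so there is no such exceptional scenario to rule out. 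What the strong hypothesis $\deg\phi>m_r$ actually does is the opposite of what you write: it guarantees that $\phi_r$ has degree $<\deg\phi$ and therefore \emph{is} an admissible competitor in the maximum defining $\delta_0(\phi)$, which yields $\delta_0(\phi)/\deg\phi\ge\mu(\phi_r)/\deg\phi_r=\mu(\phi)/\deg\phi$. When $\phi$ is not strong (so $\deg\phi=m_r$), the polynomial $\phi_r$ is excluded, the supremum over degrees $<m_r$ drops to $(w_{r-1}+\nu_{r-1})/m_{r-1}<(w_r+\nu_r)/m_r$, and one gets the strict inequality $\delta_0(\phi)<\mu(\phi)$, i.e.\ $\mu_\phi\ne\mu$. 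So ``strong'' is used for the lower bound on $\delta_0(\phi)$, not to exclude a high-slope competitor.

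The remaining half of the slope identity — that $\mu(g)/\deg g\le (w_r+\nu_r)/m_r$ for \emph{every} monic $g$ of degree $<\deg\phi$ — is the part genuinely requiring the Newton-polygon machinery, and your sketch only gestures at it. This bound holds irrespective of the strong hypothesis and is proved by induction on the depth of the chain, writing the $\phi_r$-expansion $g=\sum_s a_s\phi_r^s$, bounding $\mu(g)$ by the value of the leading term $a_\ell\phi_r^\ell$, and invoking the inductive bound on $\mu_{r-1}(a_\ell)/\deg a_\ell$ together with the fact that $(w_i+\nu_i)/m_i=\sum_{j\le i}\nu_j/m_j$ is strictly increasing in $i$. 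Until this inductive inequality is written out, and the role of the strong hypothesis is stated correctly, the proof of the key claim (and hence of the theorem) is incomplete.
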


A point $(\mu,\ll)\in\M$ is characterized by discrete invariants which may be considered as  a kind of DNA sequence encoding arithmetic properties which are common to all prime polynomials in the Okutsu class $[F]=\op{ok}(\mu,\ll)$.

\section{Types over $(K,v)$}\label{secTypes} 
We keep dealing with a fixed discrete valued field $(K,v)$ with valuation ring $\oo$. %Also, we keep the notation of the preceding sections.
 
\subsection{Types}\label{subsecTypes}
A \emph{type} is a computational object which is able to represent a pair $(\mu,\ll)$, where $\mu$ is an inductive valuation on $K(x)$ and $\ll$ is a maximal ideal in $\Delta(\mu)$. More precisely, a type collects discrete data determining a MacLane chain of $\mu$ and the maximal ideal $\ll$.

Therefore, a type $\ty$ supports some data structured into levels:
$$
\ty=(\psi_0;(\phi_1,\lambda_1,\psi_1);\cdots;(\phi_r,\lambda_r,\psi_r)).
$$ 
The number $r$ of levels is called the \emph{order} of the type.

A type $\ty=(\psi_0)$ of order $0$ is determined by the choice of an arbitrary monic irreducible polynomial $\psi_0\in\F[y]$. It supports the following data at level $0$:

\begin{itemize}
\item The minimal valuation $\mu_0$ on $K(x)$ and its normalization $v_0=\mu_0$.
\item Numerical data: $e_0=m_0=1$, $\nu_0=\lambda_0=h_0=0$.
\item $\psi_0\in\F_0[y]$ a monic irreducible polynomial. 
\item $\F_1=\F_0[y]/(\psi_0)$ a finite extension of $\F$ of degree $f_0:=\deg\psi_0$.
\item $z_0\in\F_1$ the class of $y$. Hence, $\F_1=\F_0[z_0]$ and $\psi_0$ is the minimal polynomial of $z_0$ over $\F_0$.
\item The residual polynomial operator $R_0\colon K[x]\to \F_0[y]$, where $\F_0=\F$. It is defined as $R_0(g)=\overline{g(y)/\pi^{v_0(g)}}$ for any non-zero $g\in K[x]$. 
\end{itemize}
 
If $\ty_0=(\psi_0;(\phi_1,\lambda_1,\psi_1);\dots;(\phi_{r-1},\lambda_{r-1},\psi_{r-1}))$ is a type of order $r-1\ge0$, then a type $\ty=(\ty_0;(\phi_r,\lambda_r,\psi_r))$ of order $r$ may be obtained by adding the following data at the $r$-th level:

\begin{itemize}
\item A \emph{representative} $\phi_r$ of $\ty_0$. That is, a monic polynomial $\phi_r\in\oo[x]$ of degree $m_r:=e_{r-1}f_{r-1}m_{r-1}$ such that $R_{r-1}(\phi_r)=\psi_{r-1}$. 
Lemma \ref{compare} below shows that $\phi_r$ is a key polynomial for $\mu_{r-1}$.
\item The Newton polygon operator $N_r=N_{v_{r-1},\phi_r}$. 
\item A positive rational number $\lambda_r=h_r/e_r$, with $h_r$, $e_r$ positive coprime integers. We say that $\lambda_r$ is the \emph{slope} of $\ty$ at level $r$.
\item The non-normalized slope $\nu_r=\la_r/e(\mu_{r-1})=h_r/e_1\cdots e_r$.
\item The augmented valuation $\mu_r=[\mu_{r-1};\phi_r,\nu_r]$, together with its normalization $v_r=e(\mu_r)\mu_r=e_1\cdots e_r\mu_r$.
\item $\psi_r\in\F_r[y]$ a monic irreducible polynomial, $\psi_r\ne y$. 
\item $\F_{r+1}=\F_r[y]/(\psi_r)$ a finite extension of $\F_r$ of degree $f_r:=\deg\psi_r$.
\item $z_r\in\F_{r+1}$ the class of $y$. Hence, $\F_{r+1}=\F_r[z_r]$ and $\psi_r$ is the minimal polynomial of $z_r$ over $\F_r$.
\item A residual polynomial operator $R_r\colon K[x]\to \F_r[y]$ described as follows. 
\end{itemize}
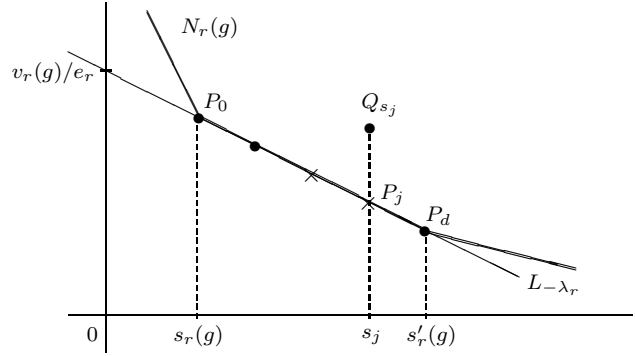
\begin{figure}
\caption{Computation of $R_r(g)$ for a non-zero polynomial $g\in K[x]$. The line $L_{-\la_r}$ has slope $-\la_r$.}\label{figAlpha}
\begin{center}
\setlength{\unitlength}{5mm}
\begin{picture}(14,8.6)
\put(2.3,5.05){$\bullet$}\put(3.8,4.3){$\bullet$}\put(5.2,3.55){$\times$}\put(6.7,2.8){$\times$}\put(8.3,2.05){$\bullet$}\put(6.85,4.8){$\bullet$}
\put(-1,0){\line(1,0){15}}\put(0,-1){\line(0,1){9.3}}
\put(-1,7){\line(2,-1){12}}
\put(2.5,5.25){\line(-1,2){1.4}}\put(2.5,5.29){\line(-1,2){1.4}}
\put(2.5,5.25){\line(2,-1){6}}\put(2.5,5.29){\line(2,-1){6}}
\put(8.5,2.2){\line(4,-1){4}}\put(8.5,2.24){\line(4,-1){4}}
\multiput(2.4,-.1)(0,.25){22}{\vrule height2pt}
\multiput(8.5,-.1)(0,.25){10}{\vrule height2pt}
\multiput(7,-.1)(0,.25){21}{\vrule height2pt}
\put(8.5,2.5){\begin{footnotesize}$P_{d}$\end{footnotesize}}
\put(2.6,5.5){\begin{footnotesize}$P_{0}$\end{footnotesize}}
\put(7.3,3.1){\begin{footnotesize}$P_j$\end{footnotesize}}
\put(6.8,5.5){\begin{footnotesize}$Q_{s_j}$\end{footnotesize}}
\put(6.8,-.6){\begin{footnotesize}$s_j$\end{footnotesize}}
\put(8,-.7){\begin{footnotesize}$s'_r(g)$\end{footnotesize}}
\put(1.8,-.7){\begin{footnotesize}$s_r(g)$\end{footnotesize}}
\put(11.2,.7){\begin{footnotesize}$L_{-\la_r}$\end{footnotesize}}
\put(2,7.5){\begin{footnotesize}$N_r(g)$\end{footnotesize}}
\put(-.5,-.7){\begin{footnotesize}$0$\end{footnotesize}}
\put(-.15,6.5){\line(1,0){.3}}
\put(-2.5,6.3){\begin{footnotesize}$v_r(g)/e_r$\end{footnotesize}}
\end{picture}
\end{center}
\end{figure}

The operator $R_r$ maps $0$ to $0$. For a non-zero $g\in K[x]$ with $\phi_r$-expansion $g=\sum_{0\le s}a_s\phi_r^s$, let us denote by $s_r(g)\le s'_r(g)$ the abscissas of the end points of the $\la_r$-component $S$ of $N_r(g)$ (cf. section \ref{subsecNewton}). Let $d=(s'_r(g)-s_r(g))/e_r$ be the \emph{degree} of $S$. There are $d+1$ points of integer coordinates $P_0,\dots,P_d$ lying on $S$, with abscissas $s_j:=s_r(g)+je_r$ for $0\le j\le d$ (see Figure \ref{figAlpha}). Denote by $Q_{s_j}=(s_j,v_{r-1}(a_{s_j}\phi_r^{s_j}))$ the point of abscissa $s_j$ in the cloud of points which is used to compute the Newton polygon  $N_r(g)$.
Consider the following residual coefficient:
\begin{equation}\label{rescoeffs}
c_j:=\begin{cases}
0,&\mbox{ if $Q_{s_j}$ lies above }N_r(g),\\
z_{r-1}^{t_{r-1}(a_{s_j})}R_{r-1}(a_{s_j})(z_{r-1})\in\F_r^*,&\mbox{ if $Q_{s_j}$ lies on }N_r(g),             
\end{cases}
\end{equation}
where for any $a\in K[x]$ we define $t_0(a)=0$ and $t_k(a)=(s_k(a)-\ell_kv_k(a))/e_k$ if $k>0$. Then, we define 
$$
R_r(g)(y):=R_{v_{r-1},\phi_r,\la_r}(g)=c_0+c_1y+\cdots+c_dy^d\in \F_r[y],
$$
Since $c_0c_d\ne0$, the polynomial $R_r(g)$ has degree $d$ and it is never divisible by $y$.

\begin{lemma}\label{compare}
Let $\ty$ be a type of order $r$ and denote $\mu:=\mu_r$, $\Delta:=\Delta(\mu)$.   
\begin{enumerate}
\item $\mu$ is an inductive valuation and the chain of augmentations
$$
\mu_0\ \stackrel{\phi_1,\nu_1}\lra\  \mu_1\ \stackrel{\phi_2,\nu_2}\lra\ \cdots
\ \stackrel{\phi_{r-1},\nu_{r-1}}\lra\ \mu_{r-1} 
\ \stackrel{\phi_{r},\nu_{r}}\lra\ \mu_{r}=\mu
$$
is a MacLane chain of $\mu$.
\item For $1\le i \le r$ denote by $\F_{i,\mu}$, $z_{i-1,\mu}$, $\psi_{i-1,\mu}$, $R_{i,\mu}$ the data and operator attached to this MacLane chain of $\mu$ in section \ref{subsecNum}. The rule $\iota_i(z_{i-1})=z_{i-1,\mu}$ determines a commutative diagram with vertical isomorphisms:
$$
\begin{array}{ccccccc}
\F=\F_{0}\ &\subset& \F_{1}\ &\subset&\cdots &\subset &\F_{r}\ \\  
\quad\ \,\|\ &&\ \downarrow\iota_1&&\cdots&&\ \downarrow\iota_r\\
\F=\F_{0,\mu}&\subset& \F_{1,\mu}&\subset&\cdots &\subset &\F_{r,\mu}
\end{array}
$$
If we denote still by $\iota_{i}$ the isomorphism between $\F_{i}[y]$ and $\F_{i,\mu}[y]$ induced by $\iota_{i}$, we have  $R_{i,\mu}=\iota_{i}\circ R_{i}$ for all $i$. Thus, up to considering these isomorphisms $\iota_i$ as identities,  we may identify all data and operators supported by $\ty$ with the analogous data and operators attached to $\mu$: 
$$\F_{i}=\F_{i,\mu},\quad z_{i-1}=z_{i-1,\mu},\quad \psi_{i-1}=\psi_{i-1,\mu},\quad R_{i}=R_{i,\mu}.$$ 
\item A polynomial $\phi\in K[x]$ is a representative of $\ty$ if and only if $\phi$ is a key polynomial for $\mu$ and $\rr(\phi)=\psi_r(y_r)\Delta$.\footnote{In this equality we  use the convention of item (2). The polynomial $\psi_r\in\F_{r}[y]$ is considered as a polynomial with coefficients in $\F_{r,\mu}\subset\Delta$ via the isomorphism $\iota_r\colon\F_{r}\to\F_{r,\mu}$.}   
\end{enumerate}
\end{lemma}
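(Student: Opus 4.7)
The plan is to proceed by induction on the order $r$. The base case $r=0$ is direct from the definitions: $\mu_0$ is inductive of depth zero with $\Delta(\mu_0)\cong\F[y]$, the operator $R_0$ of the type coincides with the one of section \ref{subsecNum}, and a representative of $\ty=(\psi_0)$ is a monic lift of $\psi_0$, which is visibly a key polynomial for $\mu_0$ with residual ideal $\psi_0(y)\Delta(\mu_0)$.

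For the inductive step, fix $\ty=(\ty_0;(\phi_r,\la_r,\psi_r))$ of order $r\ge 1$. The crucial pivot is part (3) of the inductive hypothesis applied to $\ty_0$: this says $\phi_r$, a representative of $\ty_0$ by construction of $\ty$, is a key polynomial for $\mu_{r-1}$ with $\rr_{\mu_{r-1}}(\phi_r)=\psi_{r-1}(y_{r-1})\,\Delta(\mu_{r-1})$. Since the type data force $\psi_{r-1}\ne y$, Corollary \ref{Rprops}(5) rules out $\phi_r\sim_{\mu_{r-1}}\phi_{r-1}$ when $r\ge 2$; hence $\mu_r=[\mu_{r-1};\phi_r,\nu_r]$ is a legitimate augmentation and the chain in (1) is a MacLane chain, proving (1). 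For (2), the identifications at levels $<r$ are inherited from the inductive hypothesis since the first $r-1$ augmentations of the MacLane chain of $\mu_r$ are literally those of $\mu_{r-1}$. At level $r$, Proposition \ref{sameideal}(2) identifies $\F_{r,\mu}$ with $\Delta(\mu_{r-1})/\rr_{\mu_{r-1}}(\phi_r)$, which under the inductive identification $\Delta(\mu_{r-1})\cong\F_{r-1}[y]$ from Theorem \ref{Delta} becomes $\F_{r-1}[y]/(\psi_{r-1})=\F_r$; this defines $\iota_r$, and the coherent matching of $z_{r-1}$, $\psi_{r-1}$ and $R_r$ follows because all three are defined in both contexts by the same intrinsic formula applied to the same data.

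For (3) I lean on Theorem \ref{Max} and Corollary \ref{Rprops}(5). In the direction ``$\Leftarrow$'', if $\phi$ is a key polynomial for $\mu$ with $\rr(\phi)=\psi_r(y_r)\Delta$, then $\psi_r\ne y$ and Corollary \ref{Rprops}(5) give $\phi\not\sim_\mu\phi_r$ and $R_r(\phi)(y_r)\Delta=\psi_r(y_r)\Delta$; pulled back through the isomorphism of Theorem \ref{Delta}, this is an equality of principal ideals in $\F_r[y]$. A Newton-polygon inspection of the monic $\phi$ via its $\phi_r$-expansion forces $\deg\phi=m_{r+1}$ and makes the leading coefficient of $R_r(\phi)$ equal to $1$, yielding $R_r(\phi)=\psi_r$. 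For ``$\Rightarrow$'', use Theorem \ref{Max} to exhibit a key polynomial $\phi^*$ with $\rr(\phi^*)=\psi_r(y_r)\Delta$, chosen monic of degree $m_{r+1}$, and then show that any representative $\phi$ of $\ty$ is $\mu$-equivalent to $\phi^*$, so that $\phi$ inherits the key polynomial property and its residual ideal.

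The main obstacle lies in the forward direction of (3): upgrading the abstract ideal equality $R_r(\phi^*)\,\F_r[y]=\psi_r\,\F_r[y]$ to the pointwise equality $R_r(\phi^*)=\psi_r$, and then deducing $\phi\sim_\mu\phi^*$ from $R_r(\phi)=R_r(\phi^*)$ together with monicity and the degree constraint. The subtlety is that $R_r$ only captures the $\la_r$-edge of the Newton polygon, so the $\mu$-equivalence must be extracted by truncating the $\phi_r$-expansion and invoking the specific numerical data ($f_r=\deg\psi_r$, and the $\la_r$-component of $N_r(\phi)$ has horizontal extent $e_r f_r$), rather than purely qualitative ideal-theoretic information. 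Every other step is an essentially formal consequence of the definitions once the induction is in motion.
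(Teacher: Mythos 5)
Your inductive skeleton matches the paper's, and the arguments for items (1), the base case, and the ``$\Leftarrow$'' half of (3) are essentially the ones the paper uses. The problem is in item (2), which you dispose of in one sentence but which is actually the bulk of the paper's proof.

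You write that $z_{r-1}$, $\psi_{r-1}$ and $R_r$ ``are defined in both contexts by the same intrinsic formula applied to the same data.'' That is not the case. The operator $R_r$ of the type is defined by the explicit coefficient formula (\ref{rescoeffs}), where $c_j = z_{r-1}^{t_{r-1}(a_{s_j})} R_{r-1}(a_{s_j})(z_{r-1})$ with $t_{r-1}(a) = (s_{r-1}(a)-\ell_{r-1}v_{r-1}(a))/e_{r-1}$; whereas $R_{r,\mu}$, the operator attached to the MacLane chain in section \ref{subsecNum}, is defined abstractly by the factorization property $\op{H}_{\mu_r}(g)= x_r^{s_r(g)}p_r^{u_r(g)}R_{r,\mu}(g)(y_r)$ of Theorem \ref{hmu}. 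To compare them one must first quote a concrete formula for $R_{r,\mu}$ from \cite{Rideals}, whose coefficient $c'_j$ involves the exponent $\ell'_{r-1}\mathfrak{s}_j-\ell_{r-1}\mathfrak{u}_j+\lfloor s_{r-1}(a_{s_j})/e_{r-1}\rfloor$ with $(\mathfrak{s}_j,\mathfrak{u}_j)$ the integral point of least non-negative abscissa on the relevant line. The paper then has to verify identity (\ref{tocheck}), which requires the geometric observation that the line of slope $-\la_{r-1}$ through the $\la_{r-1}$-component of $N_{r-1}(a_{s_j})$ meets the vertical axis at $(0,v_{r-1}(a_{s_j})/e_{r-1})$, together with the B\'ezout relation (\ref{Bezout}). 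None of this is ``the same formula applied to the same data''; it is a genuine computation that your argument omits, and it is the only place where the lemma's claim $R_{i,\mu}=\iota_i\circ R_i$ is actually earned.

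A smaller point: your ``$\Rightarrow$'' direction of (3) is more roundabout than necessary. You propose to fabricate a key polynomial $\phi^*$ via Theorem \ref{Max}, prove $R_r(\phi^*)=\psi_r$, and then show $\phi\sim_\mu\phi^*$, flagging this $\mu$-equivalence as the hard step. The paper avoids the detour: from $\deg\phi=m_{r+1}$ and $\deg R_r(\phi)=f_r$ one computes $(s'_r(\phi)-s_r(\phi))m_r = e_rf_rm_r=\deg\phi$, hence $s_r(\phi)=0$ and $s'_r(\phi)=e_rf_r$, which directly verifies that $\phi$ satisfies the key-polynomial criterion of \cite[Lem. 5.2]{Rideals}; then Corollary \ref{Rprops}(4) gives the residual ideal. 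Your route needs the same Newton-polygon input to control $s_r(\phi)$ and $u_r(\phi)$ (since $R_r(\phi)=R_r(\phi^*)$ alone does not imply $\hm(\phi)=\hm(\phi^*)$), so you would end up proving the paper's lemma anyway and then deducing yours from it. Not wrong, but no gain.
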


\begin{proof}
Let us prove all statements by induction on $r$. Suppose first that $\ty=(\psi_0)$ is a type of order $0$. In this case, $\mu=\mu_0$ and items (1) and (2) are trivial. Note that $R_{0}=R_{0,\mu}$ by the definition of both operators. A representative of $\ty$ is a monic polynomial $\phi\in \oo[x]$ of degree $m_1=f_0=\deg\psi_0$ such that $\overline{\phi}=R_0(\phi)=\psi_0$. On the other hand, a key polynomial for $\mu_0$ is a monic polynomial $\phi\in \oo[x]$ such that $\overline{\phi}$ is irreducible in $\F[x]$. Also, Corollary \ref{Rprops},(4) shows that $\rr(\phi)=R_0(\phi)(y_0)\Delta$. Since $R_0(\phi)$ and $\psi_0$ are monic polynomials, the equality $R_0(\phi)(y_0)\Delta=\psi_0(y_0)\Delta$ is equivalent to $R_0(\phi)=\psi_0$, by Theorem \ref{Delta}. This proves item (3). 

We assume from now on that $r>0$ and all statements of the lemma are true for types of order $r-1$. In particular, $\mu_{r-1}$ is an inductive valuation and
$$
\mu_0\ \stackrel{\phi_1,\nu_1}\lra\  \mu_1\ \stackrel{\phi_2,\nu_2}\lra\ \cdots\
\lra\ \mu_{r-2}\ \stackrel{\phi_{r-1},\nu_{r-1}}\lra\ \mu_{r-1}
$$
is a MacLane chain of $\mu_{r-1}$. For all $1\le i<r$ we have isomorphisms:
$$
\iota_i\colon \F_{i}\lra \F_{i,\mu},\quad z_{i-1}\mapsto z_{i-1,\mu}
$$
such that $\iota_i$ restricted to $\F_{i-1}$ coincides with $\iota_{i-1}$. Since $\psi_{i-1}$, $\psi_{i-1,\mu}$ are the minimal polynomials of $z_{i-1}$, $z_{i-1,\mu}$ over $\F_{i-1}$, $\F_{i-1,\mu}$, respectively, we have $\psi_{i-1,\mu}=\iota_{i-1}(\psi_{i-1})$. 
Also, $\phi_r$ is a key polynomial for $\mu_{r-1}$ such that 
$$
\rr_{\mu_{r-1}}(\phi_r)=\iota_{r-1}(\psi_{r-1})(y_{r-1})\Delta_{r-1}.
$$

In order to prove item (1) we need only to show that
$\phi_{r}\not\sim_{\mu_{r-1}}\phi_{r-1}$ if $r>1$. In fact, if $r>1$, then $\psi_{r-1}\ne y$; by Theorem \ref{Delta}, $\rr_{\mu_{r-1}}(\phi_r)\ne y_{r-1}\Delta_{r-1}$, and this implies $\phi_{r}\not\sim_{\mu_{r-1}}\phi_{r-1}$ by Corollary \ref{Rprops},(5).

Let us prove item (2). We have $\F_{r}=\F_{r-1}[z_{r-1}]$ and $\psi_{r-1}=R_{r-1}(\phi_r)$ is the minimal polynomial of $z_{r-1}$ over $\F_{r-1}$. Also, $\F_{r,\mu}=\F_{r-1,\mu}[z_{r-1,\mu}]$ and $\psi_{r-1,\mu}=R_{r-1,\mu}(\phi_r)$ (Corollary \ref{Rprops}) is the minimal polynomial of $z_{r-1,\mu}$ over $\F_{r-1,\mu}$. By the induction hypothesis, we have $R_{r-1,\mu}=\iota_{r-1}\circ R_{r-1}$, so that $\psi_{r-1,\mu}=\iota_{r-1}(\psi_{r-1})$, and this implies that $\iota_r$ is well-defined and is an isomorphism. 

By \cite[Def. 3.15 + Cor. 4.9]{Rideals}, for any non-zero $g\in K[x]$ we have 
$$
R_{r,\mu}(g)=c'_0+c'_1y+\cdots +c'_dy^d,
$$
where $d=(s'_r(g)-s_r(g))/e_r=\deg R_r(g)$ and the coefficients $c'_j\in\F_{r,\mu}$ satisfy:
$$
c'_j=\begin{cases}
0,&\mbox{if $Q_{s_j}$ lies above }N_r(g),\\
z_{r-1,\mu}^{\ell'_{r-1}\mathfrak{s}_j-\ell_{r-1}\mathfrak{u}_j}z_{r-1,\mu}^{\lfloor s_{r-1}(a_{s_j})/e_{r-1}\rfloor}R_{r-1,\mu}(a_{s_j})(z_{r-1,\mu}),&\mbox{if $Q_{s_j}$ lies on }N_r(g),             
\end{cases}
$$
where $a_{s_j}$, $Q_{s_j}$ are defined as in (\ref{rescoeffs}) and $\mathfrak{s}_j,\mathfrak{u}_j\in\Z$ are uniquely determined by:
\begin{equation}\label{leastpoint}
h_{r-1}\mathfrak{s}_j+e_{r-1}\mathfrak{u}_j=v_{r-1}(a_{s_j}),\quad 0\le \mathfrak{s}_j<e_{r-1}. 
\end{equation}

We want to prove that $R_{r,\mu}=\iota_{r}\circ R_{r}$, or equivalently $c'_j=\iota_r(c_j)$ for all $0 \le j\le d$, which is clearly equivalent to:
\begin{equation}\label{tocheck}
\ell'_{r-1}\mathfrak{s}_j-\ell_{r-1}\mathfrak{u}_j+\lfloor s_{r-1}(a_{s_j})/e_{r-1}\rfloor=
(s_{r-1}(a_{s_j})-\ell_{r-1}v_{r-1}(a_{s_j}))/e_{r-1}.
\end{equation}

Let $L$ be the line of slope $-\la_{r-1}$ containing the $\la_{r-1}$-component of $N_{r-1}(a_{s_j})$. As shown in Figure \ref{figAlpha}, this line cuts the vertical axis at the point $(0,v_{r-1}(a_{s_j})/e_{r-1})$. Hence, (\ref{leastpoint}) shows that $(\mathfrak{s}_j,\mathfrak{u}_j)$ is the point of least non-negative abscissa among all points on $L$ having integer coordinates. Since the point $(s_{r-1}(a_{s_j}), u_{r-1}(a_{s_j}))$ belongs to $L\cap(\Z_{\ge0}\times \Z)$, we have $\lfloor s_{r-1}(a_{s_j})/e_{r-1}\rfloor=(s_{r-1}(a_{s_j})-\mathfrak{s}_j)/e_{r-1}$. Then, the equality (\ref{tocheck}) is easily deduced from (\ref{leastpoint}) and the B\'ezout identity (\ref{Bezout}).

Let us prove item (3). After item (2), we may identify all data and operators supported by $\ty$ with the analogous data and operators attached to the MacLane chain of $\mu$. Suppose that $\phi$ is a representative of $\ty$, so that $\deg\phi=m_{r+1}$ and $\psi_r=R_r(\phi)$. 
By the definition of the operator $R_r$, we have 
$$
(s'_r(\phi)-s_r(\phi))m_r=e_r(\deg\psi_r) m_r=e_rf_rm_r=m_{r+1}=\deg\phi.
$$
Since $\deg \phi\ge s'_r(\phi)m_r$, we deduce that $s_r(\phi)=0$ and $s'_r(\phi)=e_r\deg\psi_r$.
Thus, $\phi$ is a key polynomial for $\mu$ because it satisfies condition (2) of \cite[Lem. 5.2]{Rideals}. By Corollary \ref{Rprops},(4), $\rr(\phi)=R_r(\phi)(y_r)\Delta=\psi_r(y_r)\Delta$. 

Conversely, suppose that $\phi\in \kpm$ satisfies $\rr(\phi)=\psi_r(y_r)\Delta$.  By Lemma \ref{irredKv}, $\phi$ is a monic polynomial with coefficients in $\oo$. Since $\psi_r\ne y$, Theorem \ref{Delta} and Corollary \ref{Rprops},(5) show that $\phi\not\smu \phi_r$ and $\rr(\phi)=R_r(\phi)(y_r)\Delta$. By \cite[Lem. 5.2]{Rideals}, $R_r(\phi)$ is monic irreducible and $\deg \phi=e_r\deg R_r(\phi) m_r$. By Theorem \ref{Delta}, the monic polynomials $\psi_r$ and $R_r(\phi)$ generate the same ideal in $\F_r[y]$; hence, $R_r(\phi)=\psi_r$ and $\deg\phi=m_{r+1}$. Thus, $\phi$ is a representative of $ \ty$.   
\end{proof}

Note that a type $\ty$ of order $r$ determines the numerical values $m_{r+1}:=e_rf_rm_r$, $V_{r+1}:=e_rf_r(e_rV_r+h_r)$ of any enlargement of $\ty$ to a type of order $r+1$.  

The data $\psi_r$, $\F_{r+1}$, $z_r$ at the $r$-th level of $\ty$ do not correspond to data attached to the MacLane chain of $\mu=\mu_r$. Through the isomorphism $\F_r[y]\simeq \Delta$ of Theorem \ref{Delta} the irreducible polynomial $\psi_r\in\F_r[y]$ determines a maximal ideal  $\ll=\psi_r(y_r)\Delta$ in $\Delta$. Hence, the type $\ty$ singles out a pair $(\mu_\ty,\ll_\ty)$, where $\mu_\ty=\mu$ is an inductive valuation and $\ll_\ty=\ll$ is a maximal ideal in $\Delta$. 

\begin{remark}
The definition of a type given in this paper has some slight differences with respect to the original definition in \cite{HN}, where $K$ was a global field.

(1) \ In \cite{HN} we used negative slopes $\lambda_i=-h_i/e_i$.

(2) \ The valuations $v_0,\dots,v_r$ were denoted $v_1,\dots,v_{r+1}$ in \cite{HN}. 

(3) \ Instead of the B\'ezout identities $\ell_ih_i+\ell'_ie_i=1$, in \cite{HN} we used the identities 
$\ell_ih_i-\ell'_ie_i=1$. This amounts to a change of sign of the data $\ell'_i$. 

(4) \ The residual operators $R_i$ have been normalized (by a slight change in the definition of the rational functions $\Phi_i$ from section \ref{subsecNum}) to satisfy $R_i(1)=1$. In this way, if $g\in K[x]$ has leading coefficient one in its $\phi_i$-expansion, then $R_i(g)$ is monic.   
\end{remark}

%\begin{definition}\label{trunc}
Let $\ty$ be a type of order $r$ over $(K,v)$. 
The \emph{truncation} of $\ty$ at level $j$, $\op{Trunc}_j(\ty)$, is the type of order $j$ obtained from $\ty$ by dropping all levels  higher than $j$.  

For any $g\in K[x]$ we define $\ord_\ty(g):=\ord_{\psi_{r}}R_{r}(g)$ in $\F_r[y]$. If $\ord_\ty(g)>0$, we say that $\ty$ \emph{divides} $g$, and we write $\ty\mid g$.  
%\end{definition}
By Corollary \ref{nextlength} and Lemma \ref{compare}, we have $\ord_\ty=\ord_{\mu_r,\phi}$ for any representative $\phi$ of $\ty$. In particular, $\ord_\ty(gh)=\ord_\ty(g)+\ord_\ty(h)$ for all $g,h\in K[x]$. 

The next result is a consequence of Proposition \ref{sameideal} and Theorem \ref{Delta}.

\begin{corollary}
Let $\ty$ be a type of order $r$, $\phi$ a representative of $\ty$, and $\alpha\in\kb$ a root of $\phi$. Then, we have an isomorphism
$$\F_{r+1}\iso \F_{\phi}, \quad z_0\mapsto \gamma_0(\alpha)+\m_\phi,
\dots,z_r\mapsto \gamma_r(\alpha)+\m_{\phi}
$$ where the rational functions $\gamma_0\dots,\gamma_r\in K(x)$ are those defined in (\ref{ratfracs}).
\end{corollary}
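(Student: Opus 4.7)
The plan is to chain together three canonical isomorphisms. By Lemma \ref{compare}(3), $\phi$ is a key polynomial for $\mu:=\mu_r$ with residual ideal $\rr_\mu(\phi)=\psi_r(y_r)\Delta$, where $\Delta=\Delta(\mu)$. Proposition \ref{sameideal}(1) then supplies a canonical isomorphism $\rho\colon\Delta/\rr_\mu(\phi)\iso\F_\phi$ induced by the evaluation $g+\pset_0^+\mapsto g(\alpha)+\m_\phi$. Theorem \ref{Delta} gives an isomorphism $\F_r[y]\iso\Delta$, $y\mapsto y_r$, which descends to $\sigma\colon\F_{r+1}=\F_r[y]/(\psi_r)\iso\Delta/\psi_r(y_r)\Delta$. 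The desired isomorphism is the composition $\rho\circ\sigma\colon\F_{r+1}\iso\F_\phi$.

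To verify the action on the generators $z_0,\dots,z_r$ of $\F_{r+1}$, recall that $z_i\in\F_{i+1}$ is by construction the image of $y_i=\op{H}_{\mu_i}(\gamma_i)$ under the tower of canonical maps $\Delta(\mu_i)\to\Delta(\mu_{i+1})\to\cdots\to\Delta$. Since $\mu_j(\gamma_i)=\mu_i(\gamma_i)=0$ for all $j\ge i$, each map sends $\op{H}_{\mu_j}(\gamma_i)$ to $\op{H}_{\mu_{j+1}}(\gamma_i)$, so $y_i$ is carried to $\op{H}_{\mu}(\gamma_i)\in\Delta$. Hence $\sigma^{-1}(z_i)$ is represented by $\op{H}_\mu(\gamma_i)$, and what must be shown is that $\rho$ sends this class to $\gamma_i(\alpha)+\m_\phi$.

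For this I would first observe that $\gamma_i(\alpha)\in\oo_\phi^*$: when $r>0$ we have $\deg\phi=m_{r+1}>\deg\phi_j$ for every $j\le r$, so since each $\phi_j$ is irreducible over $K_v$ one gets $\phi_j(\alpha)\neq0$ (the case $r=0$ is immediate because $\gamma_0=x$), and the equality $\mu(\gamma_i)=0$ then forces $v(\gamma_i(\alpha))=0$. Writing $\gamma_i=g/h$ with $g,h\in\oo[x]$ and $\mu(g)=\mu(h)$, the multiplicativity of $\op{H}_\mu$ and of evaluation at $\alpha$ together yield
\[
\rho(\op{H}_{\mu}(\gamma_i))=g(\alpha)h(\alpha)^{-1}+\m_\phi=\gamma_i(\alpha)+\m_\phi,
\]
as required. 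This finishes all cases simultaneously, including the degenerate one $z_0=0$ (corresponding to $\overline{\phi}_1=x$), which matches the condition $v(\alpha)>0$ on the $\F_\phi$ side.

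The main obstacle is the bookkeeping needed to extend Proposition \ref{sameideal}'s evaluation from polynomials to the rational functions $\gamma_i=\Phi_i^{e_i}\pi_i^{-h_i}$, and to confirm that the canonical inclusions $\Delta(\mu_i)\hookrightarrow\Delta$ really send $y_i$ to $\op{H}_\mu(\gamma_i)$ rather than to some other lift. Both points are formal consequences of the multiplicativity of $\op{H}_\mu$ and the fact that every polynomial factor appearing in the denominators of the $\gamma_i$ (powers of $\pi$ and of the earlier $\phi_j$, which are $\mu$-irreducible units away from $\rr_\mu(\phi)$) lifts to a unit of $\ggm$; nevertheless this verification is where the bulk of the work sits.
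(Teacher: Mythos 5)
Your proposal is correct and follows exactly the route the paper intends: the paper gives no proof at all, remarking only that the corollary ``is a consequence of Proposition \ref{sameideal} and Theorem \ref{Delta}'', and you chain precisely those two results (through Lemma \ref{compare}(3)) to build $\F_{r+1}\iso\Delta/\rr_\mu(\phi)\iso\F_\phi$ and then track the generators. Two small cautions on the bookkeeping you correctly flag as the crux: first, for a non-optimal type one can have $m_{r+1}=\deg\phi_j$ for some $j\le r$, so the inequality $\deg\phi>\deg\phi_j$ is not always available; what you actually need is only that the irreducible polynomials $\phi_j$ and $\phi$ are distinct (which holds since $\phi\not\smu\phi_r$ and $\deg\phi_j\mid\deg\phi$), so $\phi_j(\alpha)\ne0$ still follows. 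Second, the assertion that the transition maps $\Delta_j\to\Delta_{j+1}$ carry $H_{\mu_j}(\gamma_i)$ to $H_{\mu_{j+1}}(\gamma_i)$ should be argued on the numerator and denominator separately (i.e.\ that $\mu_j$ and $\mu_{j+1}$ agree on each factor of $\Phi_i^{e_i}$ and $\pi_i^{h_i}$, and that the denominator remains a unit), rather than merely from $\mu_j(\gamma_i)=\mu_{j+1}(\gamma_i)=0$; but this is exactly the verification you already promise, and it does go through.
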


\subsection{Construction of types}\label{subsecConstruct}
Combined with Theorem \ref{Max}, Lemma \ref{compare},(3) shows that any type admits infinitely many representatives. In this section we describe a concrete procedure to construct a representative of a type. 

\begin{proposition}\label{construct}
Let $\ty$ be a type of order $r\ge 1$. Let $\varphi\in\F_r[y]$ be a non-zero polynomial of degree less than $f_r$ and let $b\ge V_{r+1}$ be an integer. Then, we may construct a polynomial $g\in \oo[x]$ such that $$\deg g<m_{r+1}, \quad v_r(g)=b,\quad  y^{\lfloor s_r(g)/e_r\rfloor}R_r(g)=\varphi.$$
\end{proposition}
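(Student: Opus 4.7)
The plan is to argue by induction on the order $r$, handling $r = 1$ directly and reducing $r \geq 2$ to an instance of the same statement for the truncation $\op{Trunc}_{r-1}(\ty)$.

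In both cases I first decompose $\varphi = \sum_{j=k}^{d} \eta_j y^j$ with $k := \ord_y \varphi$, $d := \deg\varphi$ (so $d-k \leq f_r - 1$), choose the unique $s \in \{0,\dots,e_r-1\}$ with $s h_r \equiv b \pmod{e_r}$ (possible since $\gcd(h_r,e_r)=1$), and place the would-be $\phi_r$-expansion on the abscissas $s_j := s + (k+j)e_r$ for $0\le j\le d-k$. The target valuation of the coefficient at $\phi_r^{s_j}$ is $u_j := (b - s_j h_r)/e_r - s_j V_r$, which is an integer by the choice of $s$; a brief manipulation using $V_{r+1} = e_r f_r(h_r + e_r V_r)$ together with $s_j \leq e_r f_r - 1$ gives $u_j \geq V_r$, the bound needed to invoke the inductive hypothesis at level $r-1$.

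For the base case $r = 1$ (where $V_1 = v_0(\phi_1) = 0$) I would simply set $a_{s_j} = \pi^{u_j} \tilde\eta_{k+j}$ whenever $\eta_{k+j} \neq 0$, and $a_{s_j} = 0$ otherwise, with $\tilde\eta_{k+j} \in \oo[x]$ any lift of $\eta_{k+j} \in \F_1 \simeq \F[y]/(\psi_0)$ of degree less than $f_0$; the three conditions then follow from a direct reading of the definitions of $R_0$, $N_1$ and $v_1$, and the degenerate subcase $\psi_0 = y$ (where $z_0 = 0$) causes no trouble because every $R_0(a_{s_j})$ is a constant polynomial. For $r \geq 2$, where $\psi_{r-1} \neq y$ forces $z_{r-1}$ to be a unit of $\F_r$, I define $\eta'_j \in \F_{r-1}[y]$ to be the unique non-zero polynomial of degree less than $f_{r-1}$ with
$$
\eta'_j(z_{r-1}) \;=\; \eta_{k+j}\, z_{r-1}^{\lfloor \ell_{r-1} u_j / e_{r-1} \rfloor},
$$
invoke the inductive hypothesis applied to $\op{Trunc}_{r-1}(\ty)$ with parameters $(\eta'_j, u_j)$ to obtain $a_{s_j} \in \oo[x]$ with $\deg a_{s_j} < m_r$, $v_{r-1}(a_{s_j}) = u_j$ and $y^{\lfloor s_{r-1}(a_{s_j})/e_{r-1}\rfloor} R_{r-1}(a_{s_j}) = \eta'_j$, and finally put $g = \sum_j a_{s_j} \phi_r^{s_j}$. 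By construction, the points $(s_j, v_{r-1}(a_{s_j}\phi_r^{s_j}))$ all lie on the line of slope $-\lambda_r$ through $(0, b/e_r)$, which gives $v_r(g) = b$, $\lfloor s_r(g)/e_r\rfloor = k$, and $\deg g < m_{r+1}$ all at once.

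The main obstacle, and the source of the mysterious-looking twist in the definition of $\eta'_j$, is reconciling the residual-coefficient formula (\ref{rescoeffs}), which contains the exponent $t_{r-1}(a_{s_j})$, with the quantity $\lfloor s_{r-1}(a_{s_j})/e_{r-1}\rfloor$ supplied by the inductive hypothesis. Using the B\'ezout identity $\ell_{r-1} h_{r-1} + \ell'_{r-1} e_{r-1} = 1$ together with the congruence $s_{r-1}(a_{s_j}) \equiv \ell_{r-1} u_j \pmod{e_{r-1}}$ forced by the geometry of the $\lambda_{r-1}$-component, one obtains
$$
\lfloor s_{r-1}(a_{s_j})/e_{r-1}\rfloor - t_{r-1}(a_{s_j}) \;=\; \lfloor \ell_{r-1} u_j / e_{r-1} \rfloor,
$$
which is precisely the power of $z_{r-1}$ built into $\eta'_j$. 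Substituting this into (\ref{rescoeffs}) collapses the nonzero residual coefficients to $c_j = \eta_{k+j}$, giving $R_r(g) = \varphi/y^k$ and hence $y^{\lfloor s_r(g)/e_r\rfloor} R_r(g) = \varphi$.
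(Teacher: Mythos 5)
Your proof is correct and follows essentially the same inductive route as the paper: the same choice of abscissas $s_j = \mathfrak{s}+(k+j)e_r$, the same target valuations $u_j = b_j$, the same recursive construction of the $\phi_r$-coefficients, and the same base case $r=1$. The only difference is cosmetic — you absorb the paper's auxiliary residue $\mathfrak{s}_j$ into the expression $\lfloor \ell_{r-1}u_j/e_{r-1}\rfloor$, which is equal to $(\ell_{r-1}b_j-\mathfrak{s}_j)/e_{r-1}$, so the key identity $\lfloor s_{r-1}(a_{s_j})/e_{r-1}\rfloor - t_{r-1}(a_{s_j}) = \lfloor \ell_{r-1}u_j/e_{r-1}\rfloor$ is the same computation the paper performs.
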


\begin{proof}
Let $L$ be the line of slope $-\la_r$ cutting the vertical axis at the point $(0,b/e_r)$. Let $\mathfrak{s}$ be the least non-negative abscissa of a point of integer coordinates lying on $L$; this abscissa $\mathfrak{s}$ is uniquely determined by the conditions:
$$
\mathfrak{s}h_r\equiv b \md{e_r},\quad 0\le \mathfrak{s}<e_r.
$$
Let $k=\ord_y(\varphi)$ and write $\varphi=y^k\sum_{0\le j<f_r-k}\zeta_jy^j$, with $\zeta_j\in\F_r$ and $\zeta_0\ne0$. For each $0\le j<f_r-k$ such that $\zeta_j\ne0$ we denote 
$$
s_j=\mathfrak{s}+(j+k)e_r,\quad b_j=(b/e_r)-s_j(V_r+\la_r).
$$
Clearly, $s_j<(j+k+1)e_r\le e_rf_r$ and $b_j\ge (e_rf_r-s_j)(V_r+\la_r)> V_r+\la_r> V_r$, because $b/e_r\ge e_rf_r(V_r+\la_r)$ by hypothesis.

Also, for each such $j$ we consider an analogous abscissa $\mathfrak{s}_j$ determined by 
$$
\mathfrak{s}_jh_{r-1}\equiv b_j \md{e_{r-1}},\quad 0\le \mathfrak{s}_j<e_{r-1},
$$
and we let $\varphi_j\in\F_{r-1}[y]$ be the unique polynomial such that  
\begin{equation}\label{varphij}
\deg\varphi_j<f_{r-1},\quad \varphi_j(z_{r-1})=\zeta_j\, z_{r-1}^{(\ell_{r-1}b_j-\mathfrak{s}_j)/e_{r-1}}\in\F_r^*.
\end{equation}
For $r=1$ we have $\ell_0=0$, $\mathfrak{s}_j=0$ and $\varphi_j(z_{r-1})=\zeta_j$.

Consider $g=\phi_r^{s_0}\left(\sum_{0\le j<f_r-k}a_{s_j}\phi_r^{je_r}\right)$, where $a_{s_j}=0$ if $\zeta_j=0$, whereas for $\zeta_j\ne0$ we take $a_{s_j}\in\oo[x]$ satisfying
\begin{equation}\label{rec}
\deg a_{s_j}<m_r,\quad v_{r-1}(a_{s_j})=b_j,\quad y^{\lfloor s_{r-1}(a_{s_j})/e_{r-1}\rfloor}R_{r-1}(a_{s_j})=\varphi_j.
\end{equation}

Clearly, $\deg g<e_rf_rm_r$. Since $v_{r-1}(a_{s_j})=b_j$, the point $(s_j,v_{r-1}(a_{s_j}\phi_r^{s_j}))$ lies on $L$, and this guarantees that $v_r(g)=b$ by Lemma \ref{muprim}. By construction, $s_r(g)=s_0=\mathfrak{s}+ke_r$, so that 
$\lfloor s_r(g)/e_r\rfloor=k$. Thus, the condition $y^{\lfloor s_r(g)/e_r\rfloor}R_r(g)=\varphi$ is equivalent to $R_r(g)=\sum_{0\le j<f_r-k}\zeta_jy^j$; 
by the definition (\ref{rescoeffs}) of the coefficients of the residual polynomial, this amounts to $$z_{r-1}^{\left(s_{r-1}(a_{s_j})-\ell_{r-1}b_j\right)/e_{r-1}}R_{r-1}(a_{s_j})(z_{r-1})=\zeta_j$$
for all $0\le j<f_r-k$ such that $\zeta_j\ne0$. This equality is a consequence of (\ref{varphij}) and (\ref{rec}), having in mind that $\lfloor s_{r-1}(a_{s_j})/e_{r-1}\rfloor=(s_{r-1}(a_{s_j})-\mathfrak{s}_j)/e_{r-1}$ if $r>1$, whereas for $r=1$ we have $s_0(a_{s_j})=0$.

Therefore, we may construct $g$ by a recurrent procedure leading to the solution of the same problem for types of lower order. Thus, it suffices to solve the problem for types of order one, which is quite easy. In fact, if $r=1$ and $\zeta_j\ne0$, we may take $a_{s_j}=\pi^{b_j}a'_{s_j}$, where $a'_{s_j}$ is an arbitrary lifting of $\varphi_j\in\F[y]$ to $\oo[x]$; since $v_0(a'_{s_j})=0$, we have $v_0(a_{s_j})=b_j\ge V_1=0$, so that $a_{s_j}$ belongs to $\oo[x]$ as well. 
\end{proof}

In order to construct a representative $\phi$ of $\ty$ we may apply the procedure of Proposition \ref{construct} to construct a polynomial $g\in\oo[x]$ such that $R_r(g)=\psi_r-y^{f_r}$, and take $\phi=\phi_r^{e_rf_r}+g$. This justifies the following statement.

\begin{theorem}\label{representative}
We may efficiently construct representatives of types. 
\end{theorem}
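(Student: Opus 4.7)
The plan is to apply Proposition~\ref{construct} with a carefully chosen pair $(\varphi,b)$ and then add $\phi_r^{e_rf_r}$ on top of the resulting polynomial. By Lemma~\ref{compare}(3), together with Corollary~\ref{Rprops}(4) and \cite[Lem.~5.2]{Rideals}, it suffices to produce a monic polynomial $\phi\in\oo[x]$ of degree $m_{r+1}=e_rf_rm_r$ with $s_r(\phi)=0$ and $R_r(\phi)=\psi_r$: these conditions force $\phi$ to be a key polynomial for $\mu_r$ satisfying $\rr(\phi)=\psi_r(y_r)\Delta$, hence a representative of $\ty$. The case $r=0$ is immediate (lift $\psi_0\in\F[y]$ to a monic polynomial in $\oo[x]$ of the same degree), so assume $r\ge 1$.

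Since $\psi_r$ is monic and irreducible in $\F_r[y]$ of degree $f_r$ with $\psi_r\ne y$, we have $y\nmid\psi_r$, so $\varphi:=\psi_r-y^{f_r}$ has degree less than $f_r$ and $\ord_y(\varphi)=0$. Applying Proposition~\ref{construct} with this $\varphi$ and $b:=V_{r+1}$, we obtain $g\in\oo[x]$ with $\deg g<m_{r+1}$, $v_r(g)=V_{r+1}$, and (because $\ord_y(\varphi)=0$) $R_r(g)=\psi_r-y^{f_r}$.

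Set $\phi:=\phi_r^{e_rf_r}+g$. As $\deg g<m_{r+1}$, this is the $\phi_r$-expansion of $\phi$, whose top coefficient (at $s=e_rf_r$) is $1$, so $\phi$ is monic of degree $m_{r+1}$. The leading term contributes the point $(e_rf_r,\,e_rf_rV_r)$ to the cloud defining $N_r(\phi)$, and a short computation shows that this point lies on the line of slope $-\lambda_r$ meeting the vertical axis at height $V_{r+1}/e_r=e_rf_rV_r+f_rh_r$. By Lemma~\ref{muprim} and the equality $v_r(g)=V_{r+1}$, all cloud points contributed by $g$ lie on or above this same line, with at least one on it. Hence the $\lambda_r$-component of $N_r(\phi)$ runs from abscissa $0$ to abscissa $e_rf_r$, so $s_r(\phi)=0$, and collecting residual coefficients we obtain $R_r(\phi)=(\psi_r-y^{f_r})+y^{f_r}=\psi_r$, as required.

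The only real obstacle is Proposition~\ref{construct}, proved by induction on the order of the type: the coefficients $a_{s_j}$ in its proof are produced by solving the analogous problem for $\op{Trunc}_{r-1}(\ty)$, with base case $r=1$ reducing to lifting a polynomial over $\F$ and multiplying by a suitable power of $\pi$. Efficiency follows because the recursion has depth $r$ and at each level the number of auxiliary polynomial constructions is bounded by $f_r$.
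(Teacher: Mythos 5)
Your proof is correct and follows exactly the paper's intended argument: the paper's own justification is the single sentence preceding the theorem, which says to apply Proposition~\ref{construct} with $\varphi=\psi_r-y^{f_r}$ to get $g$, and then take $\phi=\phi_r^{e_rf_r}+g$. You have done precisely this, with the additional (correct) details that $b=V_{r+1}$ gives $\mathfrak{s}=0$ hence $s_r(g)=0$, that the cloud point $(e_rf_r,e_rf_rV_r)$ from the leading term lies on the line of slope $-\la_r$ through $(0,V_{r+1}/e_r)$, and that the residual coefficient of the leading term is $1$, so $R_r(\phi)=\psi_r$.
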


Since the level data $\la_i$, $\psi_i$ are arbitrarily chosen, Theorem \ref{representative} shows that we may construct types of prescribed order $r$ and prescribed numerical data $h_i,e_i,f_i$ for $1\le i\le r$. In other words, we may construct inductive valuations of prescribed depth and prescribed MacLane invariants. This facilitates the construction of local extensions with prescribed arithmetic properties (cf. sections \ref{subsecOM} and \ref{subsecPrescribed}).

\subsection{Equivalence of types}Let  $\ty$ be a type of order $r\ge0$. We saw in section \ref{subsecTypes} that $\ty$ determines an inductive valuation $\mu_\ty$ and a maximal ideal $\ll_\ty$ in $\Delta:=\Delta(\mu_\ty)$. 

We say that $\ty$ is \emph{optimal} if $m_1<\cdots<m_r$.
We say that $\ty$ is \emph{strongly optimal} if $m_1<\cdots<m_r<m_{r+1}$. We agree that a type of order zero is strongly optimal.

\begin{lemma}\label{optimal}
The type $\ty$ is optimal if and only if the MacLane chain of $\mu_\ty$ attached to $\ty$ is optimal. In this case, the order of $\ty$ coincides with the depth of $\mu_\ty$.

The type $\ty$ is strongly optimal if and only if $\ty$ is optimal and $\ll_\ty$ is a strong maximal ideal of $\Delta$. 
\end{lemma}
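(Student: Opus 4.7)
For part (1), the plan is to observe that both conditions are the same condition in different language. By the definition of an optimal MacLane chain (Section \ref{subsecInductive}), the chain attached to $\ty$ is optimal precisely when $\deg\phi_1<\cdots<\deg\phi_r$, i.e.\ when $m_1<\cdots<m_r$, which is exactly the definition of $\ty$ being optimal. The statement about depth is then automatic: if $\ty$ is optimal of order $r$, then by Lemma \ref{compare}(1) and Proposition \ref{unicity}, the length $r$ of this optimal MacLane chain equals the MacLane depth of $\mu_\ty$.

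For part (2), the key identification I would make first is that by Lemma \ref{compare}(3), the representatives of $\ty$ are \emph{exactly} the key polynomials $\phi$ for $\mu:=\mu_\ty$ with $\rr_\mu(\phi)=\ll_\ty$. Moreover, any two such key polynomials are $\mu$-equivalent by Theorem \ref{Max}, and $\mu$-equivalent key polynomials must have the same degree by $\mu$-minimality (if $\deg\phi'<\deg\phi$, then $\mu$-minimality of $\phi$ prevents $\phi\mmu\phi'$, contradicting $\hm(\phi)=\hm(\phi')$). Hence every key polynomial representing $\ll_\ty$ has the same degree, namely $m_{r+1}=e_rf_rm_r$.

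With this in hand, part (2) reduces to a statement about one numerical inequality. Assuming $\ty$ is optimal so that $m_r(\mu)=m_r$ by Proposition \ref{unicity}, the maximal ideal $\ll_\ty$ is strong exactly when some (equivalently, every) representative $\phi$ of $\ty$ satisfies $\deg\phi>m_r(\mu)$, i.e.\ when $m_{r+1}>m_r$ (the case $r=0$ being covered by the convention that every $\ll$ is strong and every order-zero type is strongly optimal). Combined with the chain $m_1<\cdots<m_r$ from optimality, this is precisely the condition $m_1<\cdots<m_r<m_{r+1}$ defining strong optimality of $\ty$. Conversely, if $\ty$ is strongly optimal then it is optimal and $m_{r+1}>m_r=m_r(\mu)$, so any representative of $\ty$ is a strong key polynomial witnessing that $\ll_\ty$ is strong.

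The only non-cosmetic point is the degree rigidity of $\mu$-equivalent key polynomials noted in the second paragraph; once that is in hand, both parts are bookkeeping of definitions against Lemma \ref{compare}, Proposition \ref{unicity}, and Theorem \ref{Max}, so I do not expect a serious obstacle.
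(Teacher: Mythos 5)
Your proof is correct and follows essentially the same route as the paper: part (1) is definitional, and part (2) reduces, via Lemma \ref{compare}(3), to comparing $\deg\phi$ of a representative $\phi$ with $m_r(\mu_\ty)$. The one thing you spell out that the paper leaves implicit is the degree rigidity of key polynomials sharing the same residual ideal (via Theorem \ref{Max} and $\mu$-minimality), which is exactly what makes "$\ll_\ty$ strong" testable on the single representative $\phi$; this is a genuine but minor elaboration, and it matches what Lemma \ref{compare}(3) already packages.
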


\begin{proof}
The first statement is an immediate consequence of the definitions. 

Let $\ty$ be an optimal type with representative $\phi$. By Lemma \ref{compare}, $\phi$ is a key polynomial for $\mu_\ty$ and $\rr(\phi)=\ll_\ty$. Both conditions, $\ty$ strongly optimal, and $\ll_\ty$ strong (Definition \ref{strong}), are equivalent to $\deg\phi>m_r(\mu_\ty)$.
\end{proof}

The aim of this section is to extend the correspondence $\ty\mapsto (\mu_\ty,\ll_\ty)$ to an identification of the MacLane space $\M$ of $(K,v)$ with a quotient set of strongly optimal types classified by a certain equivalence relation. 
 
Denote by $\tcal$ the set of all types over $(K,v)$ and let $\tst\subset\tcal$ be the subset of all strongly optimal types. By Lemma \ref{optimal}, we have a well-defined ``MacLane map" from $\tst$ to the MacLane space of $(K,v)$:
$$
\op{ml}\colon \tst\lra  \M,\qquad \ty\mapsto (\mu_\ty,\ll_\ty).
$$
This mapping is clearly onto. In fact, for any point $(\mu,\ll)$ in the MacLane space $\M$ we may consider an optimal MacLane chain of $\mu$:
$$
\mu_0\ \stackrel{\phi_1,\nu_1}\lra\  \mu_1\ \stackrel{\phi_2,\nu_2}\lra\ \cdots
\ \stackrel{\phi_{r-1},\nu_{r-1}}\lra\ \mu_{r-1} 
\ \stackrel{\phi_{r},\nu_{r}}\lra\ \mu_{r}=\mu.
$$
Then, with the natural identifications described in Lemma \ref{compare}, this MacLane chain determines almost all data of an optimal type of order $r$:
$$
\ty=(\psi_0;(\phi_1,\lambda_1,\psi_1);\dots;(\phi_r,\lambda_r,-)),
$$ 
such that $\mu_\ty=\mu$. Also, the MacLane chain induces the isomorphism $\F_r[y]\simeq\Delta$ of Theorem \ref{Delta}, so that $\ll=\psi_r(y_r)\Delta$ for some (unique) monic irreducible polynomial $\psi_r\in \F_r[y]$. Hence, the optimal type $\ty=(\psi_0;(\phi_1,\lambda_1,\psi_1);\dots;(\phi_r,\lambda_r,\psi_r))$ satisfies $\mu_t=\mu$ and $\ll_\ty=\ll$. Since $\ll$ is a strong maximal ideal, the type $\ty$ is strongly optimal by Lemma \ref{optimal}.

Our next aim is to describe the fibers of the MacLane map. To this end we consider an equivalence relation on the set $\tst$ of strongly optimal types.

\begin{definition}\label{equivtypes}
Consider two strongly optimal types of the same order $r$:
$$\ty=(\psi_0;(\phi_1,\lambda_1,\psi_1);\dots;(\phi_r,\lambda_r,\psi_r)),\quad
\ty^*=(\psi^*_0;(\phi^*_1,\lambda^*_1,\psi^*_1);\dots;(\phi^*_r,\lambda^*_r,\psi^*_r)).$$
We say that $\ty$ and $\ty^*$ are equivalent if they satisfy the following conditions:
\begin{enumerate}
\item[(i)\ ] $\phi^*_i=\phi_i+a_i$, $\deg a_i<m_i$, $\mu_i(a_i)\ge\mu_i(\phi_i)$, for all $1\le i\le r$.
\item[(ii)\,] $\lambda^*_i=\lambda_i$ for all $1\le i\le r$.
\item[(iii)] $\psi^*_i(y)=\psi_i(y-\eta_i)$ with $\eta_i$ defined as in (\ref{etai}), for all $0\le i\le r$.
\end{enumerate}
We write $\ty\equiv\ty^*$ in this case. We denote by $\T=\tst/\equiv$ the quotient set and we write $[\ty]\subset \tst$ for the class of all types equivalent to $\ty$. 
\end{definition}

\begin{proposition}\label{sameml}
Two strongly optimal types $\ty,\ty^*$ are equivalent if and only if $\op{ml}(\ty)=\op{ml}(\ty^*)$.
\end{proposition}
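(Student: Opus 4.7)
The plan is to prove both implications by reducing everything to Proposition~\ref{unicity} (which identifies intrinsic data of an inductive valuation across optimal MacLane chains) and Lemma~\ref{eta} (which computes how the graded-algebra data $y_i,\psi_i$ transform when the key polynomials are perturbed).

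For the implication $\op{ml}(\ty)=\op{ml}(\ty^*)\Rightarrow\ty\equiv\ty^*$, I would write $\mu=\mu_\ty=\mu_{\ty^*}$ and observe that, by Lemma~\ref{optimal}, the MacLane chains attached to $\ty$ and $\ty^*$ are both optimal MacLane chains of $\mu$. Proposition~\ref{unicity} then immediately gives $r$ equal for both types, $\nu_i=\nu_i^*$ (hence $\lambda_i=\lambda_i^*$, yielding condition~(ii)), $\mu_i=\mu_i^*$, and $\phi_i\sim_{\mu_{i-1}}\phi_i^*$. Writing $a_i:=\phi_i^*-\phi_i$ (so $\deg a_i<m_i$ since both $\phi_i,\phi_i^*$ are monic of degree $m_i$), the equality $\mu_i(\phi_i^*)=\mu_i(\phi_i)=w_i+\nu_i$ forces $\mu_{i-1}(a_i)\ge w_i+\nu_i$, i.e.\ $\mu_i(a_i)\ge\mu_i(\phi_i)$, which is condition~(i). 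Condition~(iii) for $0\le i<r$ is then exactly the content of Lemma~\ref{eta}, applied to the two optimal chains.

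For condition~(iii) at the top level $i=r$, I would use the hypothesis $\ll_\ty=\ll_{\ty^*}$: via Theorem~\ref{Delta} both ideals correspond, through the isomorphism $\F_r[y]\iso\Delta$ determined by $y\mapsto y_r$, to principal ideals generated by monic irreducible polynomials. By Lemma~\ref{eta} we have $y_r^*=y_r+\eta_r$, so the ideal $\psi_r^*(y_r^*)\Delta$ corresponds to $(\psi_r^*(y+\eta_r))$ in $\F_r[y]$, while $\psi_r(y_r)\Delta$ corresponds to $(\psi_r(y))$. The equality of these principal ideals generated by monic irreducibles gives $\psi_r(y)=\psi_r^*(y+\eta_r)$, which is precisely condition~(iii) at level $r$.

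For the converse $\ty\equiv\ty^*\Rightarrow\op{ml}(\ty)=\op{ml}(\ty^*)$, I would argue by induction on $i$ that $\mu_i=\mu_i^*$. The base $\mu_0=\mu_0^*$ is tautological. For the inductive step, conditions (i) and (ii), together with $\mu_{i-1}=\mu_{i-1}^*$, give $\mu_{i-1}(a_i)>\mu_{i-1}(\phi_i)$, hence $\phi_i^*\sim_{\mu_{i-1}}\phi_i$; together with $\nu_i=\nu_i^*$ this makes $[\mu_{i-1};\phi_i^*,\nu_i]$ and $[\mu_{i-1};\phi_i,\nu_i]$ two augmented valuations built on $\mu_{i-1}$-equivalent key polynomials with the same slope, which coincide (essentially by Proposition~\ref{unicity} applied in reverse, or directly from the definition of augmentation). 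Thus $\mu_i=\mu_i^*$, and at $i=r$ we obtain $\mu_\ty=\mu_{\ty^*}$. Finally, Lemma~\ref{eta} yields $y_r^*=y_r+\eta_r$, so condition~(iii) at level $r$ gives $\psi_r^*(y_r^*)=\psi_r(y_r^*-\eta_r)=\psi_r(y_r)$, whence $\ll_{\ty^*}=\ll_\ty$.

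The only genuinely delicate point is verifying condition~(i) in the forward direction: one must upgrade the conclusion $\phi_i\sim_{\mu_{i-1}}\phi_i^*$ of Proposition~\ref{unicity} (which only directly gives $\mu_{i-1}(a_i)>w_i$) to the stronger $\mu_i(a_i)\ge\mu_i(\phi_i)$; as sketched above, this is extracted by applying the augmentation formula for $\mu_i$ to the $\phi_i$-expansion $\phi_i^*=\phi_i+a_i$ together with the intrinsic equality $\mu_i(\phi_i^*)=\mu_i(\phi_i)$. Everything else is a mechanical translation between the type-level data $(\phi_i,\lambda_i,\psi_i)$ and the valuation-level data $(\mu_i,\ll)$.
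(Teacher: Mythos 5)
Your proof is correct and takes essentially the same route as the paper: both directions rest on Proposition~\ref{unicity} (for conditions (i), (ii) and the equality of valuations), Lemma~\ref{eta} (for the translation $y_r^*=y_r+\eta_r$), and Theorem~\ref{Delta} (to match monic generators of the residual ideal at the top level). The one step you flag as delicate — upgrading $\phi_i\sim_{\mu_{i-1}}\phi_i^*$ to $\mu_i(a_i)\ge\mu_i(\phi_i)$ — is handled in the paper by the remark immediately following Proposition~\ref{unicity}, so your explicit derivation merely makes visible what the paper cites implicitly.
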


\begin{proof}
If $\ty\equiv\ty^*$, then $\mu_\ty=\mu_{\ty^*}$ by Proposition \ref{unicity}. Also,
$$
\ll_{\ty^*}=\psi^*_r(y^*_r)\Delta=\psi_r(y^*_r-\eta_r)\Delta=\psi_r(y_r)\Delta=\ll_\ty,
$$ 
by Lemma \ref{eta}. Hence, $\op{ml}(\ty)=\op{ml}(\ty^*)$. 

Conversely, assume that $\op{ml}(\ty)=\op{ml}(\ty^*)$. From $\mu_\ty=\mu_{\ty^*}$ we deduce by  Proposition \ref{unicity} that conditions (i), (ii) from Definition \ref{equivtypes} hold, and condition (iii) holds for $i<r$. By Lemma \ref{eta}, we have moreover $y^*_r=y_r+\eta_r$. Hence, 
$$
\psi^*_r(y_r+\eta_r)\Delta=\psi^*_r(y^*_r)\Delta=\ll_{\ty^*}=\ll_\ty=\psi_r(y_r)\Delta.
$$  
Since these polynomials are monic, Theorem \ref{Delta} shows that $\psi^*_r(y+\eta_r)=\psi_r(y)$. 
\end{proof}

In combination with Theorem \ref{MLspace}, we get the following result.

\begin{theorem}\label{mlok}
The MacLane and Okutsu maps induce a canonical bijection between the set of equivalence classes of strongly optimal types and the set of Okutsu equivalence classes of prime polynomials: 
$$
\T\ \stackrel{\op{ml}}\lra\ \M\ \stackrel{\op{ok}}\lra\ (\P/\approx).
$$ 
\end{theorem}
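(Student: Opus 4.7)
The plan is to observe that all the substantial work has already been done in Theorem \ref{MLspace} and Proposition \ref{sameml}, so the argument reduces to assembling the pieces and verifying that $\op{ml}$ descends to a bijection on the quotient $\T = \tst/\!\equiv$.

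First, I would note that Theorem \ref{MLspace} already gives the bijectivity of the right-hand arrow $\op{ok}\colon \M \to \P/\!\approx$, so it suffices to establish that the left-hand arrow, descended to the quotient, is a bijection $\bar{\op{ml}}\colon \T \to \M$. Proposition \ref{sameml} gives precisely the two properties needed to produce $\bar{\op{ml}}$ and show it is injective: it is well-defined on equivalence classes (the ``only if'' direction) and any two strongly optimal types with the same image are equivalent (the ``if'' direction). Thus $\bar{\op{ml}}$ is an injective map from $\T$ to $\M$.

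Next I would verify surjectivity of $\op{ml}$, which is essentially already done in the paragraph immediately preceding Definition \ref{equivtypes}, but should be recalled here for completeness. Given any $(\mu,\ll)\in\M$, choose an optimal MacLane chain $\mu_0\,\stackrel{\phi_1,\nu_1}\lra\,\cdots\,\stackrel{\phi_r,\nu_r}\lra\,\mu_r=\mu$ of $\mu$, which exists by the discussion in section \ref{subsecInductive}. Via the identifications of Lemma \ref{compare}, this chain furnishes all data $\psi_0,(\phi_i,\lambda_i,\psi_i)_{1\le i<r}$ and $(\phi_r,\lambda_r)$ of an optimal type of order $r$. The strong maximal ideal $\ll\subset\Delta(\mu)$ corresponds under the isomorphism $\F_r[y]\simeq \Delta$ of Theorem \ref{Delta} to a principal ideal generated by a unique monic irreducible $\psi_r\in\F_r[y]$, which completes the last level of the type $\ty$. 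By construction $\mu_\ty=\mu$ and $\ll_\ty=\ll$, and by Lemma \ref{optimal} the optimality of the chain together with strength of $\ll$ implies that $\ty$ is strongly optimal. Hence $\op{ml}$ is surjective, and so is $\bar{\op{ml}}$.

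Combining, $\bar{\op{ml}}\colon\T\to\M$ is bijective, and composing with the bijection $\op{ok}$ of Theorem \ref{MLspace} yields the canonical bijection $\T\to\P/\!\approx$ of the statement. There is no substantial obstacle here: the real content is Proposition \ref{sameml} (and behind it, the uniqueness statement Proposition \ref{unicity} together with the comparison Lemma \ref{eta}), and the main thing to be careful about is the bookkeeping of the identifications between the fields $\F_i$ built into the type and the fields $\F_{i,\mu}$ attached intrinsically to $\mu$, as spelled out in Lemma \ref{compare}.
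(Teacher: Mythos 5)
Your proposal is correct and follows essentially the same route as the paper, which simply remarks ``In combination with Theorem \ref{MLspace}, we get the following result'' immediately after Proposition \ref{sameml}; the surjectivity of $\op{ml}$ is, as you note, already established in the text just before Definition \ref{equivtypes}. You have merely made explicit the assembly of the three ingredients that the paper leaves implicit.
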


\begin{corollary}\label{RepOk}
If $\phi$ is a representative of $\ty\in\tst$, then $(\op{ok}\circ\op{ml})([\ty])=[\phi]$ and $[\phi]\cap\oo[x]$ coincides with the set $\op{Rep}(\ty)$ of all representatives of $\ty$. 
\end{corollary}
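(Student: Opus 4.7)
The plan is to reduce the corollary to three already-established results: the characterization of representatives in Lemma \ref{compare}(3), the bijectivity of the Okutsu map in Theorem \ref{MLspace}, and the equivalent conditions for Okutsu equivalence in Proposition \ref{criteria}. Throughout, we write $\mu:=\mu_\ty$ and $\Delta:=\Delta(\mu)$.

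First I would establish $(\op{ok}\circ\op{ml})([\ty])=[\phi]$. By definition of $\op{ml}$ we have $\op{ml}([\ty])=(\mu,\ll_\ty)$. Since $\phi$ is a representative of $\ty$, Lemma \ref{compare}(3) tells us that $\phi$ is a key polynomial for $\mu$ with $\rr_\mu(\phi)=\psi_r(y_r)\Delta=\ll_\ty$. Therefore, by the definition of the Okutsu map, $\op{ok}(\mu,\ll_\ty)=[\phi]$. Using the bijectivity of $\op{ok}$ (Theorem \ref{MLspace}) together with the description of its inverse, we obtain the key identification $(\mu,\ll_\ty)=(\mu_\phi,\rr_{\mu_\phi}(\phi))$, which I will use repeatedly below.

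Next I would prove $\op{Rep}(\ty)=[\phi]\cap\oo[x]$ by double inclusion. For $\op{Rep}(\ty)\subset[\phi]\cap\oo[x]$, any representative $\phi'$ of $\ty$ is a key polynomial for $\mu$, hence lies in $\oo[x]$ by Lemma \ref{irredKv}. Applying the first part of the corollary with $\phi'$ in place of $\phi$ yields $[\phi']=(\op{ok}\circ\op{ml})([\ty])=[\phi]$, so $\phi'\in[\phi]\cap\oo[x]$. For the reverse inclusion, take $G\in[\phi]\cap\oo[x]$. Then $G\in\P$ and $G\approx\phi$, so Proposition \ref{criteria} gives $\mu_G=\mu_\phi$ and $\rr_{\mu_\phi}(G)=\rr_{\mu_\phi}(\phi)$. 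Combining this with the identification $(\mu_\phi,\rr_{\mu_\phi}(\phi))=(\mu,\ll_\ty)$ obtained above shows that $\mu_G=\mu$ and $\rr_\mu(G)=\ll_\ty$. By Theorem \ref{OkML}, $G$ is a key polynomial for $\mu_G=\mu$, and so by Lemma \ref{compare}(3) it is a representative of $\ty$.

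There is no real obstacle: the corollary is a direct translation between the type-theoretic language ($\op{Rep}(\ty)$) and the valuation-theoretic language ($[\phi]\cap\oo[x]$), and the only point that requires a little attention is to consistently use Theorem \ref{MLspace} to identify the pair $(\mu_\ty,\ll_\ty)$ attached to the type with the pair $(\mu_\phi,\rr_{\mu_\phi}(\phi))$ attached to the prime polynomial $\phi$.
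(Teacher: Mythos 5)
Your proof is correct and follows essentially the same approach as the paper, which dismisses the corollary as an immediate consequence of Proposition \ref{criteria} and Lemma \ref{compare}(3). You usefully make explicit the implicit appeals to Theorem \ref{MLspace} (to identify $(\mu_\ty,\ll_\ty)$ with $(\mu_\phi,\rr_{\mu_\phi}(\phi))$) and Theorem \ref{OkML} (to see that any $G\in[\phi]\cap\oo[x]$ is a key polynomial for $\mu_G=\mu_\ty$, so that Lemma \ref{compare}(3) can be applied in the reverse direction).
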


\begin{proof}
An immediate consequence of Proposition \ref{criteria} and Lemma \ref{compare},(3).  
\end{proof}

\subsection{Tree structure on the set of types}
Let us introduce a tree structure on the set $\tcal$ of types. Given two types $\ty,\ty'\in\tcal$, there is an oriented edge $\ty'\to\ty$ if and only if $\ty'=\op{Trunc}_{r-1}(\ty)$, where $r$ is the order of $\ty$. Thus, we have a unique path of length equal to the order of $\ty$:
\begin{equation}\label{path}
\op{Trunc}_{0}(\ty)\lra \op{Trunc}_{1}(\ty)\lra \cdots\lra \op{Trunc}_{r-1}(\ty)\lra \ty.
\end{equation}
The root nodes are the types of order zero. Thus, the connected components of $\tcal$ are the subtrees $\tcal_\varphi$ of all types $\ty$ with $\op{Trunc}_{0}(\ty)=(\varphi)$, for $\varphi$ running on the set $\P(\F)$ of all monic irreducible poynomials in $\F[y]$.

The branches of a type $\ty$ of order $r$ are parametrized by triples $(\phi,\la,\psi)$, where $\phi$ is a representative of $\ty$, $\la$ is a positive rational number and $\psi\in\F_{r+1}[y]$ is a monic irreducible polynomial such that $\psi\ne y$. Such a triple determines an edge $\ty\to \ty^*$, where $\ty^*=(\ty;(\phi,\la,\psi))$ is the type obtained by enlarging $\ty$ with data $(\phi,\la,\psi)$ at the $(r+1)$-th level. 

Suppose $\ty=(\psi_0;(\phi_1,\la_1,\psi_1);\dots;(\phi_r,\la_r,\psi_r))$. In practice, when we represent a path like (\ref{path}) we omit the labels of the vertices which are not root nodes and we label the edges with the level data.

\begin{equation}\label{figPath}
\setlength{\unitlength}{5mm}
\begin{picture}(14,.5)
\put(-.2,0){$\bullet$}\put(3.8,0){$\bullet\qquad \ \cdots\cdots $}\put(9.8,0){$\bullet$}\put(13.8,0){$\bullet$}
\put(-1,.1){\begin{footnotesize}$\psi_0$\end{footnotesize}}
\put(0,.2){\line(1,0){4}}\put(10,.2){\line(1,0){4}}
\put(.5,0.6){\begin{footnotesize}$(\phi_1,\la_1,\psi_1)$\end{footnotesize}}
\put(10.5,0.6){\begin{footnotesize}$(\phi_{r},\la_{r},\psi_{r})$\end{footnotesize}}
\end{picture}
\end{equation}%\medskip

Also, since the sense of the edges is self-evident, we draw them as lines instead of vectors. 
We recover the real path (\ref{path}) from its practical representation (\ref{figPath}) by attaching to each vertex of the path the type obtained by gathering all level data from the previous edges.

All truncates of a strongly optimal type $\ty$ are strongly optimal, hence the subset $\tst\subset \tcal$ is a full subtree of $\tcal$.  
Also, if $\ty\equiv\ty^*$ are strongly optimal, then $\op{Trunc}_i(\ty)\equiv\op{Trunc}_i(\ty^*)$ for all $0\le i\le r$. Therefore, the tree structure on $\tst$ induces a natural tree structure on the quotient set $\T=\tst/\equiv$. 

Since the equivalence relation $\equiv$ only identifies vertices of the same order, a path of length $r$ in $\tst$ determines a path of length $r$ in $\T$. 

For types of order zero, $\ty\equiv\ty^*$ holds only for $\ty=\ty^*$; thus, the root nodes of $\T$ are in 1-1 correspondence with the set $\P(\F)$ too.

The branches of $[\ty]\in\T$ are determined by triples $(\phi,\la,\psi)$ as above such that $e_\la\deg\psi>1$, where $e_\la$ is the least positive denominator of $\la$. Two such triples $(\phi,\la,\psi)$, $(\phi^*,\la^*,\psi^*)$ yield the same branch if and only if $(\ty;(\phi,\la,\psi))\equiv (\ty;(\phi^*,\la^*,\psi^*))$; by Definition \ref{equivtypes} this is equivalent to
$$
\la^*=\la, \quad \mu_\ty(\phi-\phi^*)\ge\mu_\ty(\phi)+\la/(e_1\cdots e_{r-1}),\quad \psi^*(y)=\psi(y-\eta),
$$
with $\eta=\eta_{r+1}$ defined as in (\ref{etai}) with respect to $\phi_{r+1}=\phi$ and $\phi^*_{r+1}=\phi^*$.

Of course, through the bijective mappings $\op{ml}$ and $\op{ok}$ we obtain a tree structure on the sets $\M$ and $\P/\approx$ as well.

\section{OM representations of square-free polynomials}\label{secOM}

\subsection{OM representations of prime polynomials}\label{subsecOM}
Consider a prime polynomial $F\in\P$ and let $(\mu,\ll)\in \M$ be the point in the MacLane space corresponding to the Okutsu equivalence class of $F$; that is,
$\op{ok}(\mu,\ll)=[F]$.

For any polynomial $\phi\in[F]\cap \oo[x]$ the pair $[(\mu,\ll),\phi]$
is called an \emph{OM representation} of $F$. If $\phi=F$ we say that the OM representation is \emph{exact}.

By Theorem \ref{mlok} and Corollary \ref{RepOk}, an OM representation may be handled in a computer as a pair
$$
[(\mu,\ll),\phi]\quad \leftrightarrow\quad [\ty,\phi],
$$
where $\ty$ is a strongly optimal type of order $r$ such that $\op{ml}([\ty])=(\mu,\ll)$, and $\phi$ is a representative of $\ty$. Note that
$$
\mu=\mu_\ty=\mu_F=\mu_\phi,\qquad \ll=\ll_\ty=\rr(F)=\rr(\phi).
$$

The polynomial $\phi$ is a ``sufficiently good" approximation to $F$ for many purposes. In a computational context, we propose to manipulate prime polynomials via OM representations $[\ty,\phi]$ instead of dealing barely with approximations with a given precision. The discrete data contained in the type $\ty$ is a kind of DNA sequence common to all individuals in the Okutsu class $[F]$, and many properties of $F$ and the extension $K_F/K_v$ are described by this genetic data. 

This approach has many advantages. %It allows to measure the distance between $F$ and $\phi$ by the \emph{quality} of the aproximation instead of the precision (see Lemma \ref{measures} below). Also, t
The genetic data of $F$ provide arithmetic information on $F$ and $K_F$, which in the classical approach has to be derived from extra routines that may have a heavy cost. Further, the genetic information of $F$ is helpful in the construction of approximations with a prescribed quality  and, more generally, it leads to a new design of fast routines carrying out basic arithmetic tasks in number fields and function fields. Finally, the constructive procedure of section \ref{subsecConstruct} may be used to efficiently construct
prime polynomials with prescribed genetic data, or equivalently, with prescribed arithmetic properties. 

These algorithmic applications are discussed in section \ref{secComp}.
Let us now mention a few concrete facts that illustrate some of these advantages. Let $r$ be the Okutsu depth of $F$, $n$ the degree of $F$ and let us fix $\t\in\kb$ a root of $F$. 

\subsection*{Maximal tamely ramified subextensions}If the residue class field $\F$ is a perfect field and $F$ is a separable polynomial, then the extensions $K_\phi/K_v$ and $K_F/K_v$ have isomorphic maximal tamely ramified subextensions \cite{okutsu,Ok}. In particular, if $K_\phi/K_v$ is tamely ramified then $K_\phi$ and $K_F$ are isomorphic.

\subsection*{Okutsu bases}
The ring $\oo_F$ is a free $\oo_v$-module of rank $n$ and a basis is determined by the genetic information \cite{Ok}.

We may express any integer $0\le m<n$ in a unique way as:
$$
m=j_0+j_1m_1+\cdots+j_rm_r,\quad 0\le j_i<e_if_i.
$$
Consider the following integer $d_m$ and polynomial $g_m$ of degree $m$:
$$
d_m=\lfloor j_1(w_1+\nu_1)+\cdots+j_r(w_r+\nu_r)\rfloor,\quad g_m(x)=\phi_0(x)^{j_0}\phi_1(x)^{j_1}\cdots \phi_r(x)^{j_r}.
$$
Then, the following family is an $\oo_v$-basis of $\oo_F$:
$$
1,\,\pi^{-d_1}g_1(\t),\,\dots,\,\pi^{-d_{n-1}}g_{n-1}(\t).
$$

\subsection*{Okutsu invariants}
All Okutsu invariants of $F$ may be deduced from an OM representation of $F$ by closed formulas. For instance, let us exhibit some more Okutsu invariants, taken from \cite[Sec. 1]{Ndiff}, besides $e(F)$, $f(F)$ and $\delta_0(F)$ already mentioned in (\ref{efdelta}).  
\begin{equation}\label{Okinvariants}
\as{1.2}
\begin{array}{l}
\op{cap}(F):=\mx\left\{v(g(\t))\mid g\in\oo[x] \mbox{ monic, } \deg g<n \right\}=w_{r+1}-\sum_{j=1}^r\nu_j,\\
\exp(F):=\mn\left\{\delta\in\Z_{\ge0}\mid \m^\delta \oo_F\subset \oo_v[\t]\right\}=\lfloor\op{cap}(F)\rfloor,\\
\ind(F):=\operatorname{length}_{\oo_v}\left(\oo_F/\oo_v[\t]\right)=n\left(\op{cap}(F)-1+e(F)^{-1}\right)/2,\\
\mathfrak{f}(F):=\mn\left\{\delta\in\Z_{\ge0}\mid (\m_F)^\delta \subset \oo_v[\t]\right\}=2\ind(F)/f(F).
\end{array}
\end{equation}

These numbers are called the \emph{capacity}, \emph{exponent}, \emph{index} and \emph{conductor} of $F$, respectively. The notation $\operatorname{length}_{\oo_v}$ indicates length as an $\oo_v$-module. 

\subsection*{Quality of an approximation}
There are two typical measures of the distance between $\phi$ and $F$:
$$
\nu=\mu_0(F-\phi), \quad \nu'=v(\phi(\t))=\mu_{\infty,F}(F-\phi),
$$
called the \emph{precision} and the \emph{quality} of the approximation, respectively. The precision is the largest positive integer $\nu$ such that $F\equiv \phi\md{\m^\nu}$, whereas the \emph{quality} is a positive rational number

Usually, $F$ is a prime factor of some given polynomial $f\in\oo[x]$. We shall see in section \ref{subsecGenomic} that in this case  
\begin{equation}\label{quality}
\nu'=w_{r+1}+\nu_{r+1}=\delta_0(F)+\la_{r+1}/e(F),
\end{equation}
where $\lambda_{r+1}$ is a positive integer which may be read in $N_{r+1}^-(f):=N_{v_r,\phi}^-(f)$.   

The two measures are related by the following inequalities. The first one is obvious and the second one was derived in \cite[Lem. 4.5]{okutsu}.

\begin{lemma}\label{measures}
For any OM representation $[\ty,\phi]$ of $F$, we have
$$
\nu'\ge\nu\ge \nu'-\op{cap}(F)=\nu_1+\cdots+\nu_r+\nu_{r+1}.
$$
\end{lemma}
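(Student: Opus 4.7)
The plan is to handle the two inequalities separately and to verify the closed-form identity by direct manipulation. First I would dispose of the easy inequality $\nu'\ge\nu$. Since $F\in\oo_v[x]$ is monic with root $\theta$, $v(\theta)\ge 0$, and writing $F-\phi=\sum_ib_ix^i\in\oo[x]$ gives
\[
\nu'=v\bigl((F-\phi)(\theta)\bigr)\ \ge\ \min_i\bigl(v(b_i)+iv(\theta)\bigr)\ \ge\ \min_iv(b_i)\ =\ \nu,
\]
i.e.\ the obvious inequality $\mu_{\infty,F}\ge\mu_0$ on $\oo[x]$. For the equality $\nu'-\op{cap}(F)=\sum_{j=1}^{r+1}\nu_j$ on the right, I would combine formula (\ref{quality}), stating $\nu'=w_{r+1}+\nu_{r+1}$, with the identity $\op{cap}(F)=w_{r+1}-\sum_{j=1}^r\nu_j$ recorded in (\ref{Okinvariants}); direct subtraction yields the claim.

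The substantive inequality $\nu\ge\nu'-\op{cap}(F)$ will be derived from \cite[Lem.~4.5]{okutsu} via the following reduction. Extracting the $\pi$-content, write $F-\phi=\pi^\nu h$ with $h\in\oo[x]$, $\mu_0(h)=0$, and $\deg h<n$ (the last because $F$ and $\phi$ are monic of the same degree $n$). The desired bound becomes the universal estimate
\[
v(h(\theta))\ \le\ \op{cap}(F)\quad\text{for every }h\in\oo[x]\text{ with }\mu_0(h)=0\text{ and }\deg h<n.
\]
For monic $h$ this is the definition of $\op{cap}(F)$. To extend to arbitrary such $h$, I would use the Okutsu basis $\{\pi^{-d_m}g_m(\theta)\}_{m<n}$ of $\oo_F$ recalled in section \ref{subsecOM}: monicness of each $g_m$ yields a unique expansion $h=\sum_{m<n}a_mg_m$ with $a_m\in\oo$. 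The valuation-basis property of an Okutsu basis would then give $v(h(\theta))=\min_m\bigl(v(a_m)+v(g_m(\theta))\bigr)$, and the hypothesis $\mu_0(h)=0$ forces some $a_{m_0}\in\oo^*$; the corresponding summand equals $v(g_{m_0}(\theta))\le\op{cap}(F)$ because $g_{m_0}$ is monic of degree $m_0<n$, and the asserted bound follows.

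The hard part is the non-cancellation identity $v(\sum c_mb_m)=\min_m v(c_mb_m)$ for the Okutsu basis elements $b_m:=\pi^{-d_m}g_m(\theta)$; it is strictly stronger than being an $\oo_v$-module basis, and it expresses the fact that, within each residue class of $v(b_m)$ modulo $\Z$, the classes of the suitably rescaled $b_m$'s in $\F_F$ are $\F$-linearly independent. This structural feature of Okutsu bases, proved in \cite{okutsu,Ok}, is precisely what prevents valuation collapses when evaluating $h(\theta)$ term by term and thus delivers the tight coupling $\nu'-\nu\le\op{cap}(F)$.
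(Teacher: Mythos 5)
The paper does not actually prove this lemma in the text: it states that $\nu'\ge\nu$ is obvious and refers to \cite[Lem.~4.5]{okutsu} for the lower bound $\nu\ge\nu'-\op{cap}(F)$. What you supply is a self-contained reconstruction, and it is correct. The elementary inequality $\nu'\ge\nu$ and the identity $\nu'-\op{cap}(F)=\sum_{j=1}^{r+1}\nu_j$ follow exactly as you argue, the latter by subtracting the formula for $\op{cap}(F)$ in (\ref{Okinvariants}) from (\ref{quality}). For the substantive part, your reduction $\nu\ge\nu'-\op{cap}(F)\iff v(h(\theta))\le\op{cap}(F)$, with $F-\phi=\pi^\nu h$ and $h$ primitive of degree $<n$, is the right packaging, and expanding $h$ in the monic Okutsu system $\{g_m\}$ and appealing to the non-cancellation property of the Okutsu basis is exactly what closes the gap. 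You are also right to isolate the non-cancellation identity as the crux: it is genuinely stronger than the $\oo_v$-basis statement recalled in section \ref{subsecOM}, and it is a theorem of \cite{okutsu,Ok} rather than something one can get for free (a unimodular change of basis can destroy it, so one must cite the specific structure of Okutsu's construction). Two small points worth making explicit if you write this up: the implication $h\in\oo[x]\Rightarrow a_m\in\oo$ uses that the transition matrix from $\{x^m\}_{m<n}$ to $\{g_m\}_{m<n}$ is unipotent triangular with entries in $\oo$, since each $g_m$ is monic of degree $m$ with coefficients in $\oo$; and the step ``$\mu_0(h)=0$ forces some $a_{m_0}\in\oo^*$'' uses $g_m\in\oo[x]$ to conclude that all $a_m\in\m$ would give $\mu_0(h)\ge 1$. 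Since the argument ultimately rests on the same Okutsu-basis input that \cite[Lem.~4.5]{okutsu} is presumably built on, this reads as a valid and faithful reconstruction rather than a genuinely different route.
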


Let us exhibit some examples showing that both inequalities are sharp and illustrating that $\nu'$ is a better measure than $\nu$ of the distance between $F$ and $\phi$.\medskip

\noindent{\bf Examples. } 
If $\phi=F+\pi^m$, then $\nu=m=\nu'$ and the first inequality of Lemma \ref{measures} is sharp. If the Okutsu depth of $F$ is $r\ge 1$ and we take $\phi=F+\pi^m\phi_1^{(n/m_1)-1}$, then $\nu=m$, whereas the quality
$$
\nu'=m+((n/m_1)-1)v(\phi_1(\t))=\nu+((n/m_1)-1)\nu_1
$$
can be much larger than $\nu$ if $n/m_1$ and/or $\nu_1$ are large. 

For instance, the prime polynomial $F=x^2+\pi$ is a representative of the type $\ty=(y;(x,1/2,y+1))$; hence, it has invariants $m_1=1$, $e_1=2$, $f_1=h_1=1=w_2$ and $\op{cap}(F)=w_2-\nu_1=1/2$. For the approximation $\phi=x^2+\pi^mx+\pi$ we have $\nu'=m+\nu_1=\nu+(1/2)$, so that the second inequality of Lemma \ref{measures} is sharp. We deduce that $\nu_2=m-(1/2)$ and $\la_2=2m-1$.

\subsection{OM representation of a square-free polynomial}\label{subsecGenomic}
Let $f=F_1\cdots F_t$ be the prime factorization in $\oo_v[x]$ of a square-free monic polynomial $f\in\oo[x]$. For each  $1\le j\le t$, let $r_j$ be the Okutsu depth of $F_j$ and  
$\t_j\in\kb$ a root of $F_j$. 

For a prime polynomial $F\in\P$, we denote by $\ty_F$ any strongly optimal type whose equivalence class corresponds to the Okutsu class of $F$ under the mapping $\op{ok}\circ\op{ml}$ of Theorem \ref{mlok}. That is, $$[\ty_F]=(\op{ok}\circ\op{ml})^{-1}([F])\in\T.$$ 

\begin{definition}\label{genomictree}
We denote by $\T(F)\subset \T$ the unibranch tree determined by the path joining $[\ty_F]$ with its root node in $\T$.  
The \emph{genomic tree} of $f$ is the finite tree $\T(f):=\T(F_1)\cup\dots\cup\T(F_t)\subset \T$.
\end{definition}

An OM representation of $f$ is an object which gathers the information provided by a family of OM representations of the prime factors. The approximations to the prime factors contained in all these OM representations constitute an approximate factorization of $f$ in $\oo_v[x]$.  
Since we are only interested in approximate factorizations which are able to distinguish the different prime factors of $f$, we are led to consider the so-called \emph{OM factorizations} of $f$.

\begin{definition}\label{okequiv2}
Let $g,h\in\oo[x]$ be monic polynomials with prime factorizations $g=G_1\cdots G_s$, $h=H_1\cdots H_{s'}$ in $\oo_v[x]$. 
We say that $g$ and $h$ are \emph{Okutsu equivalent}, and we write $g\approx h$, if $s=s'$ and $G_j\approx H_j$ for all $1\le j\le s$, up to ordering.

An expression of the form, $g\approx P_1\cdots P_s$, with $P_1,\dots,P_s\in\P\cap \oo[x]$ is called an \emph{Okutsu factorization} of $g$. 
\end{definition}

Clearly, every $g\in\oo[x]$ admits a unique (up to $\approx$) Okutsu factorization. However, we need a stronger concept for our purposes. For instance, if all factors of $g$ are Okutsu equivalent to $P$, then $g\approx P^t$ is an Okutsu factorization of $g$ which is unable to distinguish the true prime factors of $g$. 

\begin{definition}\label{OMfactorization}
We say that $P_j\in[F_j]$ is a \emph{Montes approximation to $F_j$ as a factor of $f$} if
$\ v(P_j(\t_j))>v(P_j(\t_{k}))$ for all $k\ne j$.

An \emph{OM factorization} of $f$ is an Okutsu factorization $f\approx P_1\cdots P_t$ such that each approximate factor $P_j$ is a Montes approximation to $F_j$ as a factor of $f$.  
\end{definition}

Let $f\approx P_1\cdots P_t$ be an OM factorization of $f$. By Corollary \ref{RepOk}, $P_j$ is a representative of $\ty_{F_j}$ and $[\ty_{F_j},P_j]$ is an OM representation of $F_j$ for all $j$. 

In \cite[Sec. 3.1]{BNS} it is shown that the types $\ty_{F_j}$ may be extended to types  
$$
\ty_j:=\left(\ty_{F_j};(P_j,\la_{r_j+1,j},\psi_{r_j+1,j})\right)\ \mbox{ or }\ \ty_j:=\left(\ty_{F_j};(P_j,\infty,-)\right),
$$
according to $P_j\ne F_j$ or $P_j=F_j$, respectively. These types of order $r_j+1$ satisfy
$$
\ord_{\ty_j}(F_j)=1,\quad \ty_j\nmid F_k,\quad \mbox{for all }1\le k\ne j\le t.
$$
The quality of the approximations $P_j\approx F_j$ is given by the formula: $$v(P_j(\t_j))=\delta_0(F_j)+\la_{r_j+1,j}/e(F_j).
$$

If $P_j\nmid f$, the slope $\la_{r_j+1,j}$ is an integer which may be computed as the largest slope (in absolute value) of $N_{r_j+1}^-(f)=N_{v_{r_j},P_j}^-(f)$. This slope corresponds to a side whose end points have abscissas $0$ and $1$ (see Figure \ref{figLastN}). Hence, $R_{r_j+1}(f):=R_{v_{r_j},P_j,\la_{r_j+1}}(f)$ has degree one and $\psi_{r_j+1,j}$ is equal to $R_{r_j+1}(f)$  divided by its leading coefficient.

The types $\ty_j$ are optimal, but not strongly optimal because  $e_{r_j+1}=f_{r_j+1}=1$, so that  $m_{r_j+2}=m_{r_j+1}=\deg F_j$.    

\begin{definition}\label{OMrepf}
Let $T(f)\subset \tst$ be a faithful preimage of the genomic tree of $f$; that is, $T(f)$ maps to $\T(f)$ under the quotient map $\tst\to\T$, and the vertices of $T(f)$ are pairwise inequivalent. 

An \emph{OM representation} of $f$ is the tree obtained by enlarging $T(f)$ with the $t$ new vertices $\ty_j$ and edges $\ty_{F_j}\to \ty_j$ determined by some OM factorization of $f$.
\end{definition}

The leaves of an OM representation of $f$ are in 1-1 correspondence with the prime factors of $f$, whereas the root nodes are in 1-1 correspondence with the monic irreducible factors of $\overline{f}$ in $\F[y]$. Let us see some examples where $\overline{f}$ is supposed to be a power of an irreducible polynomial in $\F[y]$, so that the OM representation of $f$ is a connected tree.
 
Let $f=F_1F_2$ be a polynomial with two Okutsu equivalent prime factors. Then, $[\ty_{F_1}]=[\ty_{F_2}]$ and the genomic tree $\T(f)=\T(F_1)=\T(F_2)$ is a unibranch tree as in (\ref{figPath}). It contains the genetic information of all prime factors of $f$, but it does not make apparent how to distinguish these factors. 

An OM representation of $f$ gives a more precise view of the different prime factors of $f$ and their genetic information:

\begin{center}
\setlength{\unitlength}{5mm}
\begin{picture}(20,3)
\put(-.2,1.2){$\bullet$}\put(3.8,1.2){$\bullet\qquad \ \cdots\cdots $}\put(9.8,1.2){$\bullet$}\put(13.8,1.2){$\bullet$}
\put(-1,1.3){\begin{footnotesize}$\psi_0$\end{footnotesize}}
\put(0,1.4){\line(1,0){4}}\put(10,1.4){\line(1,0){4}}
\put(.5,1.8){\begin{footnotesize}$(\phi_1,\la_1,\psi_1)$\end{footnotesize}}
\put(10.5,1.8){\begin{footnotesize}$(\phi_{r},\la_{r},\psi_{r})$\end{footnotesize}}
\put(17,2.7){$\bullet$}\put(17,-.3){$\bullet$}
\put(16.2,2){\begin{footnotesize}$\left(P_1,\la_{r+1,1},\psi_{r+1,1}\right)$\end{footnotesize}}
\put(16.2,0.6){\begin{footnotesize}$\left(P_2,\la_{r+1,2},\psi_{r+1,2}\right)$\end{footnotesize}}
\multiput(14,1.4)(.2,.1){15}{.}
\multiput(14,1.4)(.2,-.1){15}{.}
\end{picture}
\end{center}\medskip

We represent the edges $\ty_{F_j}\to \ty_j$ with dotted lines to emphasize that the leaves $\ty_j$ are not strongly optimal types.

In general, the vertices $\ty_{F_i}$ are not necessarily leaves of the tree $T(f)$. It may happen that $\ty_{F_i}$ concides with a vertex in the path joining $\ty_{F_j}$ with its root node for some $j\ne i$. Thus, the leaves of an OM representation of $f$ may sprout from arbitrary vertices in $T(f)$. For instance, in the next example $f$ has four prime factors; the vertex $\ty_{F_1}$ has order $0$, $\ty_{F_2}=\ty_{F_3}$ have order $3$ and $\ty_{F_4}$ has order $5$.

\begin{equation}\label{sampletree}
%\begin{center}
\setlength{\unitlength}{5mm}
\begin{picture}(12,2)
\put(-.2,0){$\bullet$}\put(.8,1){$\bullet$}\put(1.8,0){$\bullet$}
\put(0,0.2){\line(1,0){2}}\multiput(0,0.2)(.1,.1){10}{.}
\put(2,0.2){\line(1,1){1}}\put(2,0.2){\line(1,-1){1}}
\put(2.8,-1){$\bullet$}\put(2.8,1){$\bullet$}
\put(3,1.2){\line(1,0){2}}
\put(3,-.8){\line(1,0){2}}\put(5,-.8){\line(1,0){2}}\put(7,-.8){\line(1,0){2}}
\put(1.1,1.4){\begin{footnotesize}$\ty_1$\end{footnotesize}}
\put(-1.2,.1){\begin{footnotesize}$\ty_{F_1}$\end{footnotesize}}
\put(4.8,1){$\bullet$}
\put(4.8,-1){$\bullet$}\put(6.8,-1){$\bullet$}\put(8.8,-1){$\bullet$}
\multiput(5,1.2)(.2,.1){7}{.}\multiput(5,1.2)(.2,-.1){7}{.}
\put(6.4,1.8){$\bullet$}\put(7,1.9){\begin{footnotesize}$\ty_2$\end{footnotesize}}
\put(4.4,1.6){\begin{footnotesize}$\ty_{F_2}$\end{footnotesize}}
\put(6.4,.4){$\bullet$}\put(7,.4){\begin{footnotesize}$\ty_3$\end{footnotesize}}
\multiput(9,-.8)(.15,0){10}{.}
\put(10.5,-1){$\bullet$}\put(11,-.9){\begin{footnotesize}$\ty_4$\end{footnotesize}}
\put(8.7,-.5){\begin{footnotesize}$\ty_{F_4}$\end{footnotesize}}
\end{picture}
%\end{center}
\end{equation}\medskip

We define the \emph{index of coincidence} $i([\ty],[\ty'])$ between two vertices $[\ty],[\ty']\in\T$, as follows. If they have different root nodes we agree that $i([\ty],[\ty'])=0$; otherwise, we take $i([\ty],[\ty'])=1+\ell$, where $\ell$ is the length of the intersection of the two paths joining $[\ty]$ and $[\ty']$ with their common root node.  

We may extend this notion to prime polynomials. If $F,G\in\P$, we define $i(F,G)$ as the index of coincidence of $[\ty_F]$ and $[\ty_G]$ as vertices of $\T$. For instance, in (\ref{sampletree}) we have $i(F_1,F_1)=i(F_1,F_2)=i(F_1,F_3)=i(F_1,F_4)=1$, $i(F_2,F_2)=i(F_2,F_3)=i(F_3,F_3)=4$, $i(F_2,F_4)=i(F_3,F_4)=2$, and $i(F_4,F_4)=6$.    

We say that a leaf of an OM representation of $f$ is \emph{isolated} if the previous node has only one branch. For instance, in (\ref{sampletree}) the leaf corresponding to $F_4$ is isolated and the other three leaves are not isolated.

\section{Computation of the genetics of a polynomial: the Montes algorithm}\label{secMontes}
In this section, we describe  the OM factorization algorithm developed by Montes in 1999, inspired by the ideas of Ore and MacLane \cite{montes}. It was first published in \cite{algorithm}, based on the theoretical background developed in \cite{HN}. 
In the context of this paper, the aim of the Montes algorithm is the computation of an OM representation of a given square-free polynomial $f\in\oo[x]$.

Let $\pset=\{F_1,\dots,F_t\}$ be the set of prime factors of $f$ in $\oo_v[x]$. 
For any type $\ty$ we denote $$\pset_\ty=\{F\in\pset\mid \ty\mid F\}\subset \pset.$$ Since $\ord_\ty(f)=\sum_{1\le j\le t}\ord_\ty(F_j)$, the set $\pset_\ty$ is empty if and only if $\ty\nmid f$. Also, if $\ord_\ty(f)=1$, then there is an index $j$ such that $\ord_\ty(F_j)=1$ and $\ord_\ty(F_k)=0$ for all $k\ne j$; thus, $\pset_\ty=\{F_j\}$ is a one-element subset in this case. 

The Montes algorithm is based on Theorem \ref{main}. The idea is to detect successive dissections of the set $\pset$ by subsets of the form
$\pset_\ty$ for adequate types. The first dissection is derived from the factorization $\overline{f}=\prod_\varphi \varphi^{\omega_\varphi}$ into the product of powers of pairwise different irreducible factors in $\F[y]$. Each irreducible factor $\varphi$ determines a type of order zero $\ty_\varphi=(\varphi)$ and the subset $\pset_{\ty_\varphi}$ contains all prime factors of $f$ whose reduction modulo $\m$ is a power of $\varphi$. By Hensel's lemma, we obtain a partition $\pset=\bigcup_\varphi\pset_{\ty_\varphi}$.    

In order to dissect $\pset_{\ty_\varphi}$, we choose a representative $\phi$ of $\ty_\varphi$; that is, a monic lifting of $\varphi$ to $\oo[x]$. By Lemma \ref{length2} and Corollary \ref{nextlength}, $\omega_\varphi=\ord_{\ty_\varphi}(f)$ is the length of the principal Newton polygon $N_{v_0,\phi}^-(f)$; thus, in order to compute this polygon we need only to compute the first $\omega_\varphi+1$ coefficients of the $\phi$-expansion of $f$. Then, for each slope $-\la$ of a side of $N_{v_0,\phi}^-(f)$ we compute the residual polynomial $R_{v_0,\phi,\la}(f)\in\F_1[y]=\F/(\varphi)[y]$. Finally, for each monic irreducible factor $\psi$ of $R_{v_0,\phi,\la}(f)$ in $\F_1[y]$ we consider the type of order one $\ty_{\la,\psi}=(\varphi;(\phi,\la,\psi))$. By definition, $\ord_{\ty_{\la,\psi}}(f)=\ord_\psi(R_{v_0,\phi,\la}(f))>0$,
so that the subsets $\pset_{\ty_{\la,\psi}}$ are not empty. By Theorem \ref{main}, $\pset_{\ty_\varphi}=\bigcup_{\la,\psi}\pset_{\ty_{\la,\psi}}$ is a partition. 

Each subset $\pset_{\ty_{\la,\psi}}$ is furtherly dissected by types obtained as enlargements of $\ty_{\la,\psi}$ with a similar procedure. By a certain 
process of \emph{refinement}, the algorithm is able to perform all these dissections dealing only with strongly optimal types. 

As mentioned above, when we reach a type $\ty$ with $\ord_\ty(f)=1$, then $\pset_\ty=\{F_j\}$ singles out a prime factor of $f$.

Let us briefly review the relevant subroutines which are used.\medskip

\noindent{\tt Factorization($\mathcal{F}$,\,$\varphi$)}

\noindent Factorization of $\varphi\in\mathcal{F}[y]$ into a product of irreducible polynomials in $\mathcal{F}[y]$.\medskip

\noindent{\tt Newton($\ty$,\,$\omega$,\,$g$)}

\noindent The type $\ty$ of order $i$ is equipped with a representative $\phi$. The routine computes the first $\omega+1$ coefficients $a_0,\dots,a_\omega$ of the canonical $\phi$-expansion $g=\sum_{0\le s}a_s\phi^s$, and the Newton polygon of the set of points $(s,v_i(a_s\phi^s))$ for $0\le s\le\omega$.\medskip

\noindent{\tt ResidualPolynomial($\ty$,\,$\la$,\,$g$)}

\noindent The type $\ty$ of order $i-1$ is equipped with a representative $\phi$. The routine computes the residual polynomial $R_{v_{i-1},\phi,\la}(g)\in\F_i[y]$.\medskip

\noindent{\tt Representative($\ty$)}

\noindent Computation of a representative of $\ty$ by the procedure described in section \ref{subsecConstruct}.

We now describe the Montes algorithm in pseudocode. 
Along the process of enlarging types by adding new level data, the order of a type $\ty$ is the largest level $i$ for which all three fundamental invariants $(\phi_i,\lambda_i,\psi_i)$ are assigned. We emphasize the type to which a certain level data belongs as a superindex: $\phi_i^\ty$, $\la_i^\ty$, $\psi_i^\ty$, etc.
\medskip

\noindent{\bf MONTES' ALGORITHM}\vskip 1mm

\noindent INPUT:

$-$ A discrete valued field $(K,v)$ with valuation ring $\oo$.

$-$ A monic square-free polynomial $f\in \oo[x]$. \medskip

\st{1}Initialize an empty list {\tt Forest}

\st{2}{\tt Factorization($\F$,$\overline{f}$)}

\st{3}FOR each monic irreducible factor $\varphi$ of $\overline{f}$ DO

\stst{4}Take a monic lifting $\phi\in \oo[x]$ of $\varphi$ and create a type $\ty$ of order zero with 
\vskip -.4mm \stst{}$\psi_0^\ty\leftarrow\varphi, \quad \omega_1^\ty\leftarrow \ord_\varphi \overline{f},\quad \phi_1^\ty\leftarrow \phi,\quad \F_1^\ty\leftarrow \F[y]/(\varphi)$

\stst{5}Initialize a tree of types $T_\varphi$ having $\ty$ as the unique vertex

\vskip -.4mm \stst{}Initialize a stack {\tt BranchNodes}$=[\ty]$ 

\stst{}{\bf WHILE $\#${\tt BranchNodes}\;$>0$  DO}

\ststst{6}Extract a type $\ty_0$ from {\tt BranchNodes}. Let $i-1$ be its order

\ststst{7}IF $\phi_i^{\ty_0}\mid f$ THEN $f\leftarrow f/\phi_i^{\ty_0}$ and add the leaf $(\ty_0;(\phi_i^{\ty_0},\infty,\hbox{--}))$ to $T_\varphi$

%\vskip -.4mm \stststst{}

\ststst{8}$N\leftarrow$ {\tt Newton($\ty_0$,\,$\omega_i^{\ty_0}$,\,$f$)}

\ststst{9}FOR every side $S$ of $N$ DO

\stststst{10}$\lambda_i^{\ty_0}\leftarrow$ $-$slope of $S$, \ $R_i(f)\leftarrow$ {\tt ResidualPolynomial($\ty_0$,\,$\lambda_i^{\ty_0}$,\,$f$)}

\stststst{11}{\tt Factorization($\F_i^{\ty_0}$,\,$R_i(f)$)}

\stststst{12}FOR every monic irreducible factor $\psi$ of $R_i(f)$ DO

\ststststst{}(a) \ Set $\ty\leftarrow\ty_0$ and extend $\ty$ to an order $i$ type
by setting  

\vskip -.4mm \ststststst{}\qquad$\psi_i^{\ty}\leftarrow \psi,\quad \F_{i+1}^\ty\leftarrow\F_i^\ty[y]/(\psi)$

\ststststst{}(b) \ IF $\omega_i^{\ty_0}=1$ THEN add the leaf $\ty=\left(\ty_0;(\phi_i^\ty,\la_i^\ty,\psi_i^\ty)\right)$ to $T_\varphi$ 

\vskip -.4mm \ststststst{}\qquad and go to step {\bf 6} 

\ststststst{}(c) \ $\om_{i+1}^\ty\leftarrow \ord_{\psi}R_i(f)$, \ $\phi_{i+1}^\ty\leftarrow$ {\tt Representative($\ty$)}

\ststststst{}(d) \ IF $\deg \phi_{i+1}^\ty>\deg \phi_{i}^\ty$ THEN 

\vskip -.4mm \ststststst{}\qquad\quad\qquad add the vertex $\ty=\left(\ty_0;(\phi_i^\ty,\la_i^\ty,\psi_i^\ty)\right)$ to $T_\varphi$

\ststststst{}\qquad ELSE 
$\ \phi_i^{\ty}\leftarrow \phi_{i+1}^{\ty}, \quad \om_i^{\ty}\leftarrow\om_{i+1}^\ty$ 
\vskip -.4mm \ststststst{}\qquad\quad\qquad and delete the $(i+1)$-th level of $\ty$  

\ststststst{}(e) \ Add $\ty$ to {\tt BranchNodes}

\stst{}{\bf END WHILE}

\stst{13}Add the tree $T_\varphi$ to the list {\tt Forest}\medskip

\noindent OUTPUT:\vskip .1mm

$-$ The list {\tt Forest} of connected trees is an OM representation of $f$. 
  \medskip

The arguments of \cite{algorithm} show that the algorithm terminates and has the right output.
In that paper it was assumed that $K$ was a number field, but the arguments are valid for an arbitrary discrete valued field $(K,v)$. However, the design of the algorithm we present here has some changes with respect to the original design. Therefore, it may be worth clarifying some aspects on the flow and the output of the algorithm. 

Let $T$ be the output OM representation of $f$. The forest $T$ is the disjoint union of connected trees $T_\varphi$ attached to the different irreducible factors $\varphi$ of $\overline{f}$ in $\F[y]$.

\begin{remark}\label{rmkAlgo}
 
(1) \
An element in the list {\tt BranchNodes} is a vertex of $T_\varphi$, represented by a strongly optimal type $\ty_0$ of order $i-1$, together with attached data $\phi_i^{\ty_0}$ and $\omega_i^{\ty_0}$ at the $i$-th level. It may happen that different elements in {\tt BranchNodes} have the same underlying vertex $\ty_0$ of $T_\varphi$.

In step {\bf 12} we construct a type  $\ty_{\la,\psi}:=\ty=(\ty_0;(\phi,\la,\psi))$ of order $i$ and we compute $\omega_{\la,\psi}:=\omega_{i+1}^\ty=\ord_{\ty}(f)$ and a representative $\phi_{\la,\psi}:=\phi_{i+1}^\ty$. 
By Theorem \ref{main}, we have a partition $\pset_{\ty_0}=\bigcup_{\la,\psi}\pset_{\ty_{\la,\psi}}$.

If $\deg\phi_{\la,\psi}>\deg\phi$, then $\ty_{\la,\psi}$ yields a new vertex of $T_\varphi$ with previous node $\ty_0$. If $\deg\phi_{\la,\psi}=\deg\phi$, then $\ty_{\la,\psi}$ is not strongly optimal and it cannot be a vertex of $T_\varphi$. However, the subset $\pset_{\ty_{\la,\psi}}\subset\pset$  cannot be neglected. The algorithm adds to the list {\tt BranchNodes} the vertex corresponding to the type $\ty_0$ of order $i-1$ with data $\phi_{\la,\psi}$, $\omega_{\la,\psi}$ at the $i$-th level. This is called a \emph{refinement step}. In a future iteration of the WHILE loop the branches of this node will determine a partition of the old set $\pset_{\ty_{\la,\psi}}$ \cite[Sec. 3.2]{algorithm}. 

Note that a vertex $\ty_0$ may sprout some branches of $T_\varphi$ in a WHILE loop and then sprout some other branches in a future iteration of the WHILE loop, derived from a refinement step. These new branches of $\ty_0$ may again either lead to new vertices of $T_\varphi$ or to further refinement steps.\medskip

(2) \ In the original design of the algorithm in \cite{algorithm}, all leaves of $T$ were isolated, at the price of admitting leaves represented by types of order $r+2$, where $r$ is the Okutsu depth of the corresponding prime factor of $f$ \cite[Thm. 4.2]{okutsu}.

Since we want all leaves to have order $r+1$, we must admit non-isolated leaves. The algorithm stores a \emph{cutting slope} $\hcs$ as a ``secondary datum" of each type $\ty$ representing a leaf. This is a non-negative integer which vanishes if and only if the leaf is isolated.
The Newton polygon $N_{r+1}^-(f)$ determined by $\ty$ has a first side of slope $-\la_{r+1}<-\hcs$ whose end points have abscissas $0$ and $1$. All other sides of the polygon have slope greater than or equal to $-\hcs$ (see Figure \ref{figLastN}).

\begin{figure}[h]
\caption{Newton polygon $N_{r+1}^-(f)$ determined by a leaf of $T$. The line $L_{\cs}$ has slope $-\hcs$ and $f=\sum_{0\le s}a_s\phi_{r+1}^s$.}\label{figLastN}
\setlength{\unitlength}{5mm}
\begin{picture}(16,9.5)
\put(.85,7.8){$\bullet$}\put(2.85,3.8){$\bullet$}
\put(0,1.5){\line(1,0){6}}\put(1,.5){\line(0,1){8.5}}
\put(1,8){\line(1,-2){2}}\put(1.02,8){\line(1,-2){2}}
\multiput(3,1.4)(0,.25){11}{\vrule height2pt}
\multiput(.9,4)(.25,0){19}{\hbox to 2pt{\hrulefill }}
\put(2.1,6){\begin{footnotesize}$-\la_{r+1}$\end{footnotesize}}
\put(-1,7.9){\begin{footnotesize}$v_{r}(a_0)$\end{footnotesize}}
\put(-2.2,3.9){\begin{footnotesize}$v_r(a_1\phi_{r+1})$\end{footnotesize}}
\put(5.3,4.2){\begin{footnotesize}$L_{\cs}$\end{footnotesize}}
\put(2.9,.8){\begin{footnotesize}$1$\end{footnotesize}}
\put(.6,.8){\begin{footnotesize}$0$\end{footnotesize}}
\put(10.85,7.8){$\bullet$}\put(12.85,3.8){$\bullet$}
\put(10,1.5){\line(1,0){6}}\put(11,.5){\line(0,1){8.5}}
\put(11,8){\line(1,-2){2}}\put(11.02,8){\line(1,-2){2}}
\put(13,4){\line(2,-1){2}}\put(13.02,4){\line(2,-1){2}}
\multiput(11.5,5.4)(.1,-.1){35}{.}
\multiput(15,2.9)(.2,-.1){4}{.}
\multiput(13,1.4)(0,.25){11}{\vrule height2pt}
\multiput(10.9,4)(.25,0){9}{\hbox to 2pt{\hrulefill }}
\put(12.1,6){\begin{footnotesize}$-\la_{r+1}$\end{footnotesize}}
\put(9,7.9){\begin{footnotesize}$v_{r}(a_0)$\end{footnotesize}}
\put(7.8,3.9){\begin{footnotesize}$v_r(a_1\phi_{r+1})$\end{footnotesize}}
\put(15.1,1.8){\begin{footnotesize}$L_{\cs}$\end{footnotesize}}
\put(12.9,.8){\begin{footnotesize}$1$\end{footnotesize}}
\put(10.6,.8){\begin{footnotesize}$0$\end{footnotesize}}
\put(0.3,-0.4){\begin{footnotesize}isolated leaf ($h_{\cs}=0$)\end{footnotesize}}
\put(9.8,-0.4){\begin{footnotesize}non-isolated leaf ($h_{\cs}>0$)\end{footnotesize}}
\end{picture}
\end{figure}
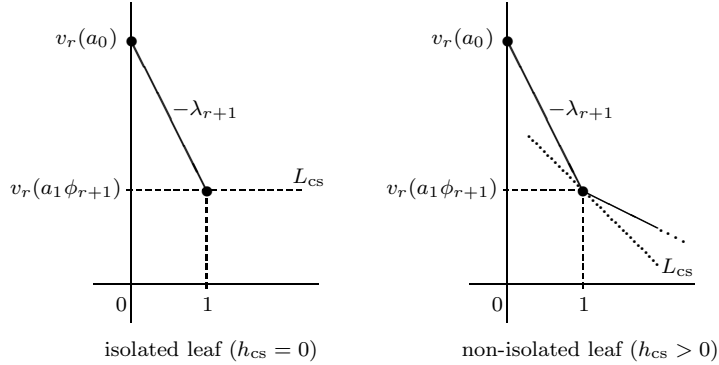

In \cite[Sec. 1.3]{newapp} a description may be found of some more secondary data stored in the types of an OM representation of $f$, which have been ignored in the pseudo-code description of the algorithm.\medskip

(3) \ For any type $\ty\in T$ the prime factors of $f$ in $\pset_\ty$ correspond to the leaves of $T$ for which $\ty$ is one of the vertices in the path joining the leaf with its root node.    
\end{remark}

The only algorithmic assumptions on the fields $K$ and $\F$ for the algorithm to work properly are the existence of efficient routines for the division with remainder of polynomials in $\oo[x]$ and the factorization of polynomials over finite extensions of the residue class field $\F$. 
The performance will depend as well on the efficiency of these two tasks. We have not yet analyzed the complexity of the algorithm in the general case, but for $\F$ a finite field,   the following complexity estimation was obtained in \cite[Thm. 5.14]{BNS}.     

\begin{theorem}
If $\F$ is a finite field, the complexity of the Montes algorithm, measured in number of operations in $\F$ is 
$$O\left(n^{2+\epsilon}+n^{1+\epsilon}(1+\delta)\log(q)+n^{1+\epsilon}\delta^{2+\epsilon}\right),$$
where $q=\#\F$, $n=\deg f$ and $\delta:=v(\dsc(f))$. 
\end{theorem}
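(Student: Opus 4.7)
The plan is to decompose the total cost into three families of sub-tasks and bound each one separately: (a) factorizations of residual polynomials over the residue extensions $\F_i$, (b) $\phi$-expansions plus Newton polygon and residual polynomial computations, and (c) construction of representatives via the recursive procedure of section \ref{subsecConstruct}. Once the cost of each sub-task is known in terms of the local parameters $m_i, f_i, e_i, \omega_i$, we aggregate over all vertices of the output forest $T$ and use global invariants of $f$ to control the sum.

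First I would establish the combinatorial bounds on the shape of $T$. Since the leaves of $T$ are in bijection with the prime factors $F_1,\dots,F_t$ of $f$ in $\oo_v[x]$, we have $t\le n$. For each leaf let $r_j$ be the Okutsu depth of $F_j$; the sequence $m_1<\cdots<m_{r_j}\le n$ together with $m_i\mid m_{i+1}$ gives $r_j=O(\log n)$. More importantly, standard index formulas (cf.\ the Okutsu invariant $\operatorname{ind}(F_j)$ in \eqref{Okinvariants}) and the well-known equality $\sum_j\operatorname{ind}(F_j)\le v(\dsc(f))/2 = \delta/2$ bound the total ``genetic mass'' that the algorithm has to traverse; in particular the sum over all vertices of $T$ of the quantities $\omega_i\cdot e_i f_i$ arising in the WHILE loop is $O(n+\delta)$. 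This is the engine that turns the per-vertex estimates into the global bound.

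Next I would import the complexity of the subroutines from the literature on fast arithmetic and polynomial factorization. Using fast polynomial multiplication in $\F[y]$, each Euclidean division needed to produce the first $\omega+1$ terms of a $\phi$-expansion of a polynomial of degree at most $n$ costs $O^{\sim}(n)$ operations in $\F$, so the call {\tt Newton}$(\ty,\omega,f)$ costs $O^{\sim}(n)$; the subsequent Newton polygon is then computed in $O(\omega\log\omega)$ comparisons. The call {\tt ResidualPolynomial} amounts to $O(\omega)$ evaluations of previously computed residual coefficients, plus $O(\omega)$ arithmetic operations in $\F_i$, which translates into $O^{\sim}(\omega\,m_if_i)=O^{\sim}(n)$ operations in $\F$. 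For {\tt Factorization}$(\F_i,R_i(f))$ one uses equal-degree + distinct-degree factorization: the cost is $O^{\sim}\!\bigl(d^{1+\epsilon}\bigr)$ arithmetic operations in $\F_i$ for the separable part and $O^{\sim}\!\bigl(d\,\log(\#\F_i)\bigr)$ for the distinct-degree step, with $d=\deg R_i(f)$ and $\#\F_i=q^{m_if_i/e_i}$; converted to $\F$-operations this becomes $O^{\sim}\!\bigl(n^{1+\epsilon}\bigr)+O^{\sim}\!\bigl(n\log q\bigr)$ per call. Finally, the recursion in Proposition \ref{construct} that produces a representative costs $O^{\sim}(n)$ operations in $\F$, because at each of the $r=O(\log n)$ levels the work done is dominated by an expansion of degree $<m_{i+1}\le n$.

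It remains to aggregate these per-vertex bounds. The initial factorization of $\overline f$ is charged once at cost $O^{\sim}(n^{1+\epsilon})+O^{\sim}(n\log q)$. Every subsequent vertex of $T$ corresponds to one iteration of the WHILE loop, and the key accounting observation is that each of the subroutines {\tt Newton}, {\tt ResidualPolynomial}, {\tt Representative} is called at most $O(1)$ times per vertex, while the number of vertices is $O(n)$ (yielding the $n^{2+\epsilon}$ term from $\phi$-expansions) and the total contribution of the $\log q$-terms across all calls of {\tt Factorization} telescopes to $O^{\sim}\!\bigl(n^{1+\epsilon}(1+\delta)\log q\bigr)$; the latter uses the depth/index bound of the first step, since the cumulative number of distinct-degree factorization calls is bounded in terms of $\delta$. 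The remaining contribution, coming from the accumulated cost of residual factorizations and representative constructions at the deeper levels, is bounded via the Okutsu depth estimate $r_j\le O(\log\delta)$ and the bound $\omega_i\le n/m_i$; the worst case grows like $n^{1+\epsilon}\delta^{2+\epsilon}$, which matches the last term of the stated bound.

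The main obstacle I expect is the tight accounting of step (c): ensuring that the refinement steps described in Remark \ref{rmkAlgo}(1), which can visit the same ``underlying vertex'' of $T_\varphi$ several times, do not blow up the total number of iterations beyond the $O(n+\delta)$ budget. Controlling this requires the sharp bound on the total number of refinement steps proven in \cite[Sec.\ 3.2]{algorithm}, combined with the Ore--Okutsu index formula which links the MacLane invariants $(e_i,f_i,h_i)$ of each prime factor to $v(\dsc(F_j))$, and hence to $\delta$ via $\sum_j v(\dsc(F_j))\le\delta$. The rest of the proof is essentially bookkeeping of the estimates collected above.
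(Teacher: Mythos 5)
The paper does not contain its own proof of this statement: the bound is quoted directly from \cite[Thm.\ 5.14]{BNS}, and the paper cites that reference for the argument. So there is no in-paper proof to compare against; the comparison can only be made with the analysis in \cite{BNS}.

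Your overall architecture---cost each subroutine in terms of local parameters, aggregate over vertices of the output forest, and control the aggregate via $\ind(f)\le\delta/2$ together with the depth bound $r=O(\log n)$---is the right one and is consistent with \cite{BNS}. But the sketch is materially incomplete exactly where the bound is least obvious: the $n^{1+\epsilon}\delta^{2+\epsilon}$ term is asserted (``the worst case grows like\ldots'') rather than derived, and the mechanism producing it is never identified. The missing idea is $\m$-adic precision bookkeeping. The algorithm manipulates lifts in $\oo[x]$ whose coefficients must be carried to a precision that can reach $O(\delta)$, so a single ring operation in $\oo$ (for instance one coefficient-level step in a $\phi$-expansion) costs $O(\delta^{1+\epsilon})$ operations in $\F$, not $O(1)$. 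That is one factor of $\delta^{1+\epsilon}$; the other factor of $\delta$ comes from bounding the total number of refinement iterations of the WHILE loop by $\ind(f)$. Your estimate that {\tt Newton}$(\ty,\omega,f)$ costs $O(n^{1+\epsilon})$ implicitly prices $\oo$-arithmetic at unit cost and therefore sweeps both of these factors under the rug. Similarly, the claim that the cumulative distinct-degree factorization cost ``telescopes'' to $O(n^{1+\epsilon}(1+\delta)\log q)$ requires an explicit bound, again coming from the index, on the total degree of all residual polynomials factored across the run, which you gesture at without pinning down. Supplying these two precision and aggregation arguments is the actual content of \cite[Sec.\ 5]{BNS}.
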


\begin{example}\label{ex2}
Take $K=\Q$ and $v$ the $2$-adic valuation, so that $\F$ is the field with $2$ elements.  Let $z\in\overline{\F}$ be a generator of the field with $4$ elements. Consider the polynomial 
$$
\begin{array}{rl}
f=&x^{12} + 2x^{11} + 12x^{10} + 36x^9 + 100x^8 + 240x^7 + 544x^6+ 992x^5  
\\&+ 1328x^4 + 2080x^3 + 1728x^2+1600x+
    1125899906842816.
\end{array}
$$

The Montes algorithm computes the following OM representation of $f$: 
\begin{center}
\setlength{\unitlength}{5mm}
\begin{picture}(20,6)
\put(-.2,1.2){$\bullet$}\put(4.8,1.2){$\bullet$}\put(9.8,1.2){$\bullet$}\put(14.8,1.2){$\bullet$}\put(19.8,1.2){$\bullet$}
\put(-.7,1.3){\begin{footnotesize}$y$\end{footnotesize}}
\put(0,1.4){\line(1,0){5}}\put(10,1.4){\line(1,0){5}}
\put(5,1.4){\line(1,0){5}}
\put(5,1.4){\line(2,1){5}}
\put(.8,0.5){\begin{footnotesize}$(x,\frac12,y+1)$\end{footnotesize}}
\put(5.3,0.5){\begin{footnotesize}$(x^2+2,\frac32,y+1)$\end{footnotesize}}
\put(2,3){\begin{footnotesize}$(x^2+2,1,y^2+y+1)$\end{footnotesize}}
\put(10.3,4.5){\begin{footnotesize}$(\phi_3,82,y+z+1)$\end{footnotesize}}
\put(10.8,.5){\begin{footnotesize}$(\phi_3^*,\frac12,y+1)$\end{footnotesize}}
\put(9.8,3.7){$\bullet$}\put(15,3.7){$\bullet$}
\put(15.6,0.6){\begin{footnotesize}$\left(\phi^*_4,318,y+1\right)$\end{footnotesize}}
\multiput(10,3.85)(.2,0){25}{.}
\multiput(15,1.3)(.2,0){25}{.}
\end{picture}
\end{center}
where 
$$
\begin{array}{l}
\phi_3=x^4+2x^3+4x^2+4x+12,\\
\phi_3^*=x^4+4x^2+8x+4,\\
\phi_4^*=x^8+8x^6+16x^5+24x^4+96x^3+96x^2+128x+16.
\end{array}
$$

The polynomial $f$ has two prime factors in $\Z_2[x]$, say $f=FF^*$, with Okutsu depths $2$,\,3, respectively. From the structure of the tree we see that $i(F,F^*)=2$. 

The numerical MacLane-Okutsu invariants of $F$ are: 
$$
\begin{array}{llllll}
m_1=1,& e_1=2, & h_1=1, & f_1=1,& \la_1=1/2, & w_1=0, \\
m_2=2,& e_2=1, & h_2=1, & f_2=2, & \la_2=1,& w_2=1.
\end{array}
$$

The numerical MacLane-Okutsu invariants of $F^*$ are: 
$$
\begin{array}{llllll}
m^*_1=1,& e^*_1=2, & h^*_1=1, & f^*_1=1,& \la^*_1=1/2, & w^*_1=0, \\
m^*_2=2,& e^*_2=2, & h^*_2=3, & f^*_2=1,& \la^*_2=3/2, & w^*_2=1, \\
m^*_3=4,& e^*_3=2, & h^*_3=1, & f^*_3=1, & \la^*_3=1/2,& w^*_3=7/2.
\end{array}
$$

The formulas (\ref{Okinvariants}) allow us to compute Okutsu invariants of both factors from these data. For instance,
$$
\begin{array}{lllll}
e(F)=2,& f(F)=2, & \delta_0(F)=3, & \op{cap}(F)=2,  & \ind(F)=3,\\
e(F^*)=8,& f(F^*)=1, & \delta_0(F^*)=29/4, & \op{cap}(F^*)=47/8, & \ind(F^*)=20.
\end{array}
$$

Let us denote $\phi:=\phi_3$,  $\phi^*:=\phi^*_4$. We know that $f\approx\phi\phi^*$ is an OM factorization of $f$. The qualities of the approximations $\phi\approx F$, $\phi*\approx F^*$ are given by formula (\ref{quality}). The slopes of the last levels of the OM representations of $F$, $F^*$ are $\la_3=82$, $\la_4^*=318$, respectively. We obtain:
$$
\mu_{\infty,F}(\phi)=44,\qquad \mu_{\infty,F^*}(\phi^*)=47. 
$$
The estimation of Lemma \ref{measures} shows in both cases that the precision is at least $42$. %Indeed, this is the actual precision of both approximations. 
\end{example}

\section{Algorithmic applications of polynomial genetics}\label{secComp}

We proceed to illustrate how to use the genetic data
to solve some typical problems related to  polynomials over local fields. The algorithms exploit the connection of some concrete problem  with the genetics of certain polynomials over local fields. This leads to an excellent practical performance. 

%We separate the algorithms in two groups according to their local or global nature.

\subsection{Single-factor lifting and $v$-adic factorization}\label{subsecSFL}
Let $f\in\oo[x]$ be a monic square-free polynomial and let $f=F_1\cdots F_t$ be its factorization into a product of prime polynomials in $\oo_v[x]$.

A \emph{$v$-adic factorization} of $f$ is an approximate factorization with a prescribed precision; that is, a family of monic polynomials $P_1,\dots,P_t\in\oo[x]$ such that $P_j\equiv F_j \md{\m^\nu}$ for all $0\le j\le t$, for a prescribed positive integer $\nu$.

For many purposes, one needs sometimes to find an approximation with a prescribed quality to a single prime factor $F$ of $f$. This is the aim of the \emph{single-factor lifting algorithm} \cite{GNP}, abbreviated as SFL in what follows. The algorithm of \cite{GNP} was based on the original design of the Montes algorithm in which all trees of the output tree were isolated. Therefore, we review the design of SFL to adapt it to the present version of the Montes algorithm. 

The starting point of SFL is a leaf $\ty$ of an  OM representation of $f$ 
\begin{equation}\label{leaf}
\ty=(\psi_0;(\phi_1,\la_1,\psi_1);\dots;(\phi_r,\la_r,\psi_r);(\phi_{r+1},\la_{r+1},\psi_{r+1}))
\end{equation}
computed by the Montes algorithm. Let $F$ be the prime factor of $f$ singled out by $\ty$, and let $\t\in\kb$ be a root of $F$. We denote  $$V:=V_{r+1},\quad \phi:=\phi_{r+1},\quad  h_\phi:=\la_{r+1}=h_{r+1},\quad e:=e(F)=e_1\cdots e_r.
$$
The polynomial $\phi$ is a Montes approximation to $F$ as a factor of $f$. By (\ref{quality}), the quality of the approximation is:
$$
v(\phi(\t))=(V+h_\phi)/e=\delta_0(F)+h_\phi/e.
$$
The main loop of SFL computes a new Montes approximation $\Phi$ such that
$$
h_\Phi\ge 2h_\phi-\hcs.
$$
The Newton polygon $N_{v_r,\Phi}^-(f)$ coincides with $N_{v_r,\phi}^-(f)$ except for the side of largest slope (in absolute value) $-h_\Phi$, whose end points have abscissas $0$ and $1$ (see Figure \ref{figLastN}). In particular, the cutting slope $\hcs$ of $\ty$ separates again this initial side from the rest of the sides. Therefore, we may apply the SFL loop to $\Phi$ and iterate the procedure until we get a Montes approximation $\Phi$ with $h_\Phi$ large enough. By Lemma \ref{measures}, if $h_\Phi\ge e(\nu+\op{cap}(F)-\delta_0(F))$, then $\Phi \equiv F \md{\m^\nu}$.

After $k$ iterations of the SFL loop we get a Montes approximation $\Phi_k$ with
$$h_{\Phi_k}\ge h_\phi+(2^{k}-1)(h_\phi-\hcs).$$
Hence, for a given positive integer $H$, the number of iterations of the SFL loop that are needed to get $h_{\Phi_k}\ge H$ is $\lceil\log_2((H-\hcs)/(h_\phi-\hcs))\rceil$.

Let us briefly explain how to construct $\Phi$ from $\phi$. Consider the first two coefficients $a_0$, $a_1$ of the $\phi$-expansion of $f$:
$$
f=q\phi+a_0,\quad a_1= q\md{\phi}.
$$
A look at Figure \ref{figLastN} shows that $v_r(a_0)=v_r(a_1)+V+h_\phi$.
Let $\alpha\in\kb$ be a root of $\phi$ and let $K_\phi=K_v(\alpha)$, $\oo_\phi$ the valuation ring of $K_\phi$ and $\m_\phi$ the maximal ideal. Since $\deg a_0,\deg a_1<\deg\phi$, we have $\phi\nmid_{\mu_r}a_0$, $\phi\nmid_{\mu_r}a_1$, and Theorem \ref{vgt} shows that
$v(a_0(\alpha))=v_r(a_0)/e$, $v(a_1(\alpha))=v_r(a_1)/e$.

The following theorem is a slight variation of \cite[Thm. 5.1]{GNP}, where it was supposed that the leaf $\ty$ was isolated and $\hcs=0$.

\begin{theorem}\label{GNP5.1}
Let $a\in\oo[x]$ be a polynomial with $\deg a<\deg\phi$ and consider an integer $\hcs<h\le h_\phi$. Then, $\Phi:=\phi+a$ is a Montes approximation to $F$ with $h_\Phi\ge 2h-\hcs$ if and only if $a(\alpha)\equiv a_0(\alpha)/a_1(\alpha) \md{\m_\phi^{V+2h-\hcs}}$.
\end{theorem}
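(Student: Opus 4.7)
The plan is to reduce the statement to a valuation estimate at $\alpha$ via the Newton polygon $N_{v_r,\Phi}(f)$. By Figure~\ref{figLastN} applied to $\Phi$, the first slope of $N_{v_r,\Phi}(f)$ equals $-h_\Phi$; let $\tilde a_0 = f \bmod \Phi$ be the constant term of the $\Phi$-adic expansion of $f$. Since $\deg\tilde a_0<\deg\Phi=\deg\phi$ and $\phi$ is irreducible over $K_v$ by Lemma~\ref{irredKv}, Theorem~\ref{vgt} gives $v_r(\tilde a_0)=e\,v(\tilde a_0(\alpha))$, so the desired bound $h_\Phi\ge 2h-\hcs$ translates into an explicit lower bound on $v(\tilde a_0(\alpha))$.

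To compute $\tilde a_0(\alpha)$, substitute $\phi=\Phi-a$ into $f=q\phi+a_0$ to get $f=q\Phi+(a_0-qa)$, then divide $a_0-qa$ by $\Phi$ to obtain a remainder $\tilde a_0$ and a quotient $\tilde q$, so that $\hat q=q+\tilde q$ and $\tilde a_0=(a_0-qa)-\tilde q\,\Phi$. Using $\phi(\alpha)=0$, $\Phi(\alpha)=a(\alpha)$ and $q(\alpha)=a_1(\alpha)$, one arrives at the identity
\[
\tilde a_0(\alpha)\;=\;\bigl(a_0(\alpha)-a_1(\alpha)\,a(\alpha)\bigr)\;-\;\tilde q(\alpha)\,a(\alpha).
\]
The hypothesis on $a$, after multiplication by $a_1(\alpha)$, is exactly the statement that the bracketed term has $v\ge(V+2h-\hcs)/e+v(a_1(\alpha))$. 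The heart of the proof is to show that the correction $\tilde q(\alpha)\,a(\alpha)$ satisfies the same lower bound, so the two contributions cannot cancel. For this I would combine an estimate on $v(a(\alpha))$ coming from the hypothesis and the identity $v(a_0(\alpha))=v(a_1(\alpha))+(V+h_\phi)/e$ (read off from Figure~\ref{figLastN} applied to $\phi$), with an estimate on $v(\tilde q(\alpha))$ derived from the fact that $\tilde q$ collects the contributions of the higher $\phi$-coefficients of $f$, whose lattice points lie above the line of slope $-\hcs$ in $N_{v_r,\phi}(f)$ by the very definition of the cutting slope. The assumption $h>\hcs$ enters precisely to guarantee this estimate is strong enough.

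For the converse, the same identity read backwards shows that if $v(\tilde a_0(\alpha))$ meets the bound forced by $h_\Phi\ge 2h-\hcs$, then, since $v(\tilde q(\alpha)a(\alpha))$ continues to satisfy this inequality by the estimate above, we must have $v(a_0(\alpha)-a_1(\alpha)a(\alpha))\ge(V+2h-\hcs)/e+v(a_1(\alpha))$ as well, which is exactly the required congruence. To conclude that $\Phi$ is a genuine Montes approximation to $F$ and not merely a close polynomial, I would check that the sides of $N_{v_r,\Phi}(f)$ beyond the first coincide with those of $N_{v_r,\phi}(f)$ beyond its first side; Theorem~\ref{main} then assigns the same prime factor $F$ to $\Phi$ as to $\phi$. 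The main obstacle is the careful valuation bookkeeping for $\tilde q(\alpha)$: one must track precisely how the cutting slope $\hcs$ controls the quotient contribution and check that the proof works uniformly in the two regimes $2h-\hcs\le h_\phi$ and $2h-\hcs>h_\phi$.
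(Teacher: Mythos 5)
The paper does not prove this theorem; it is stated as ``a slight variation of \cite[Thm.\ 5.1]{GNP}'', adapted to a possibly nonzero cutting slope $\hcs$, and the argument is deferred to that reference. So your blind proposal has to be judged against that target rather than against anything printed here.

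Your key identity $\tilde a_0(\alpha)=\bigl(a_0(\alpha)-a_1(\alpha)\,a(\alpha)\bigr)-\tilde q(\alpha)\,a(\alpha)$ is correct and is the natural reduction: from $f=q\Phi+(a_0-qa)$, dividing $a_0-qa$ by $\Phi$ and evaluating at $\alpha$ with $\Phi(\alpha)=a(\alpha)$, $q(\alpha)=a_1(\alpha)$. The passage from $h_\Phi$ to a bound on $v(\tilde a_0(\alpha))$ via $v_r(\tilde a_0)=e\,v(\tilde a_0(\alpha))$ is also legitimate. But two essential steps are missing, one of which you flag yourself. First, translating $h_\Phi\ge 2h-\hcs$ into a bound on $v_r(\tilde a_0)$ alone tacitly uses $v_r(\tilde a_1)=v_r(a_1)$, i.e.\ that the point of $N_{v_r,\Phi}^-(f)$ at abscissa $1$ is unchanged; for the ``if'' direction this cannot be read off Figure~\ref{figLastN} applied to $\Phi$, because you do not yet know that $\Phi$ is a Montes approximation, so the stability of that ordinate (and of the sides of slope in $[-\hcs,0)$) has to be proved, not assumed. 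Second, the argument genuinely pivots on a lower bound for $v(\tilde q(\alpha))$ in terms of $\hcs$, and you leave this unresolved, explicitly calling it ``the main obstacle''. The cutting-slope picture you invoke (the points of $N_{v_r,\phi}^-(f)$ at abscissas $\ge 2$ lie on or above the line of slope $-\hcs$ through $(1,v_r(a_1\phi))$) is the correct raw input, but $\tilde q$ is a quotient of $a_0-qa$ by $\Phi$, not a $\phi$-coefficient, and converting the cutting-slope bound into the needed bound on $v(\tilde q(\alpha))$ is precisely the nontrivial bookkeeping that occupies \cite[Thm.\ 5.1]{GNP}. Without it neither direction of the equivalence is established. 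As written the proposal is a sound program with the right key identity, but not a proof.
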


Let us show how to find a polynomial $a\in\oo[x]$ satisfying the condition of Theorem \ref{GNP5.1}. Compute a polynomial $\Psi\in K[x]$ with $\deg\Psi<n_F=\deg F=\deg \phi$ and $v_r(\Psi)=-v_r(a_1)$ \cite[Lem. 4.8]{GNP}. Multiply then,
$$
A_0:=a_0\Psi\md{\phi},\quad
A_1:=a_1\Psi\md{\phi}.
$$
Clearly, $a_0(\alpha)/a_1(\alpha)=A_0(\alpha)/A_1(\alpha)$, $v(A_0(\alpha))=(V+h_\phi)/e$ and $v(A_1(\alpha))=0$, so that $A_1(\alpha)$ is invertible in $\oo_\phi$. In order to compute $a\in\oo[x]$, it suffices to find an element $A_1^{-1}(\alpha)\in K_\phi$ with $A_1^{-1}(\alpha)A_1(\alpha)\equiv 1\mod (\m_\phi)^{h_\phi-\hcs}$ and then take $a(x)\in K[x]$ to be the unique polynomial of degree less than $n_F$ satisfying $a(\alpha)=A_0(\alpha)A_1^{-1}(\alpha)$. By the formulas in (\ref{Okinvariants}) we have $v(a(\alpha))>\exp(F)=\exp(\phi)$, so that $a(x)\in\oo[x]$.

In order to avoid inversions in $K_\phi$, we may compute the  approximation $A_1^{-1}(\alpha)$ to $A_1(\alpha)^{-1}$ by the classical Newton iteration:
$$
x_{k+1}=x_k(2-A_1(\alpha)x_k),
$$
starting with a lift $x_0\in\oo_\phi$ of the inverse of $A_1(\alpha)+\m_\phi$ in the residue field $\oo_\phi/\m_\phi$. In \cite[Sec. 4.2]{newapp} it is explained how to compute $x_0$.\medskip

\noindent{\bf SINGLE-FACTOR LIFTING}\vskip 1mm

\noindent INPUT:

$-$ A discrete valued field $(K,v)$ with valuation ring $\oo$.

$-$ A monic square-free polynomial $f\in \oo[x]$.

$-$ A leaf $\ty$ of order $r+1$, as in (\ref{leaf}), of an OM representation of $f$.

$-$ A positive integer $H$. \medskip

\st{1} $\phi\gets\phi_{r+1}$,\quad$q, a_0 \gets \op{quotrem}(f,\phi)$,$\quad a_1 \gets q \md \phi$

\st{2} $h_\phi\gets v_r(a_0)-v_r(a_1\phi)$

\st{3} Find $\Psi\in K[x]$ with $\deg\Psi<\deg\phi$ and $v_r(\Psi)=-v_r(a_1)$ \quad\cite[Lem. 4.8]{GNP}

\st{4} $A_0\gets \Psi a_0\md\phi$,\quad$A_1\gets\Psi a_1\md\phi$

\st{5} Find $A_1^{-1}\in \oo[x]$ with $A_1^{-1}(\alpha)A_1(\alpha)\equiv 1\md{\m_\phi}$ \quad\cite[Sec. 4.2]{newapp}

\st{6} FOR $i=1$ TO $\lceil\log_2(h_\phi-\hcs)\rceil$ DO

\stst{}$A_1^{-1}\gets A_1^{-1}(2-A_1 A_1^{-1})\md\phi$

\st{7} $a\gets A_0A_1^{-1}\md\phi$,\quad$\Phi\gets\phi+a$

\st{8} FOR  $i=1$ TO $\lceil\log_2((H-\hcs)/(h_\phi-\hcs))\rceil-1$ DO

\stst{} (a) \ $q,a_0 \gets \op{quotrem}(f,\Phi),\quad a_1 \gets q \md \Phi$

\stst{} (b) \ $A_0\gets \Psi a_0\md\Phi,\quad A_1\gets\Psi a_1\md\Phi$

\stst{} (c) \ $A_1^{-1}\gets A_1^{-1}(2-A_1 A_1^{-1})\md\Phi$

\stst{} (d) \ $a\gets A_0A_1^{-1} \md\Phi,\quad \Phi\gets\Phi+a$
\medskip

\noindent OUTPUT:\vskip .1mm

$-$ A Montes approximation $\Phi$ to the prime factor $F$ of $f$ attached to $\ty$, such that $h_\Phi\ge H$.
\medskip

Note that step {\bf 7} terminates a first iteration of the SFL loop. The rest of iterations are performed by the loop described in step {\bf 8}. For these iterations it is not necessary to start over the inversion loop of step {\bf 6}. In fact, let $\alpha_k\in \kb$ be a root of $\Phi_k$ and denote by $A_{1,k}$ the $k$-th polynomial $A_1$. Then, for $\hcs<h\le h_{\Phi_k}$, the inversion of $A_{1,k}(\alpha_k)$ modulo $\m_{\Phi_k}^{h-\hcs}$ is also an inversion of  $A_{1,k+1}(\alpha_{k+1})$ modulo $\m_{\Phi_{k+1}}^{h-\hcs}$ \cite[Prop. 5.5]{GNP}; hence, we get the desired inversion of  $A_{1,k+1}(\alpha_{k+1})$ modulo $\m_{\Phi_{k+1}}^{2h-\hcs}$ just by one iteration of the Newton inversion procedure in step {\bf 8}(c).

The complexity of the SFL routine was analyzed in \cite[Lem. 6.5]{GNP} and \cite[Thm. 5.16]{BNS}. In the next result we denote $n=\deg f$, $n_F=\deg F$ and $\delta_F=v(\dsc(F))$.

\begin{theorem}\label{complexitySFL}
The SFL routine requires $O\left(nn_F\nu^{1+\ep}+n\delta_F^{1+\ep}\right)$ operations in $\F$ to compute a Montes approximation $\Phi$ to $F$ as a factor of $f$, with precision $\nu$.
\end{theorem}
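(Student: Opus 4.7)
The plan is to bound the $\F$-operation cost of the SFL routine by decomposing it into a setup phase (steps 1--7, including the initial Newton inversion loop of step 6) and a main doubling phase (step 8), and then summing a geometric series in the working $\m$-adic precision.

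For the main loop, I would first use the quality-doubling inequality $h_{\Phi_{k+1}}\ge 2h_{\Phi_k}-\hcs$ established just before the algorithm, together with Lemma~\ref{measures} (which says that a target quality $H=e(\nu+\op{cap}(F)-\delta_0(F))$ suffices for the requested precision $\nu$), to conclude that the loop runs $O(\log(\nu/h_\phi))$ times, with working $\m$-adic precision $p_k=\Theta(2^k h_\phi/e)$ at the $k$-th step. At precision $p$ the dominant step is the Euclidean division $\op{quotrem}(f,\Phi)$ of a degree-$n$ polynomial by a degree-$n_F$ polynomial whose $\m$-adic coefficients are truncated to precision $p$; with classical polynomial arithmetic this costs $\tilde O(nn_F p)$ $\F$-operations, whereas the multiplications and Newton inversion update modulo $\Phi$ in steps 8(b)--(d) cost only $\tilde O(n_F^2 p)$ and are dominated since $n\ge n_F$. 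The geometric series $\sum_k nn_F\cdot 2^k h_\phi$ telescopes to $O(nn_F\nu)$, yielding the term $nn_F\nu^{1+\epsilon}$ after absorbing polylog factors into $\epsilon$.

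For the setup phase the working precision never exceeds $h_\phi$: step 6 is itself an $O(\log h_\phi)$-iteration Newton doubling loop whose cost is controlled by its last iteration at precision $h_\phi$, and step 1's $\op{quotrem}(f,\phi)$ at precision $h_\phi$ costs $\tilde O(nn_F h_\phi)$. To convert this into the desired $\tilde O(n\delta_F)$ bound, I would estimate $h_\phi$ via the Okutsu-invariant identities collected in (\ref{Okinvariants}): from $h_\phi\le e\cdot\op{cap}(F)$ and $\op{ind}(F)=n_F(\op{cap}(F)-1+e(F)^{-1})/2$, together with the classical inequality relating the index of $F$ to the discriminant valuation $\delta_F=v(\dsc(F))$, one obtains $h_\phi=O(\delta_F/n_F)$, so the setup cost $\tilde O(nn_F h_\phi)$ becomes $\tilde O(n\delta_F)$.

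The main obstacle is precisely this relation between the initial quality $h_\phi$ read off from the leaf $\ty$ of the Montes tree and the intrinsic discriminant invariant $\delta_F$; without it, the setup would contribute the worse bound $nn_F\delta_F^{1+\epsilon}$. The required precision-tracking, following the arguments of \cite[Lem.~6.5]{GNP} and \cite[Thm.~5.16]{BNS}, relies on the explicit formulas for $\op{ind}(F)$ and $\op{cap}(F)$ in terms of the basic MacLane invariants $e_i,f_i,h_i$ of $\mu_F$, and on the observation (used implicitly in step 8(c)) that each Newton inversion update needs only a single doubling from the previous inverse rather than a full restart, which is what keeps the total inversion cost within the main loop's budget.
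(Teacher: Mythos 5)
The paper does not actually prove Theorem~\ref{complexitySFL}; it defers entirely to \cite[Lem.~6.5]{GNP} and \cite[Thm.~5.16]{BNS}. Your sketch follows the same strategy those references use, and the overall plan is sound: split SFL into a setup phase and a doubling phase, bound the doubling phase by a geometric series in the working precision, and use the Okutsu-invariant identities to convert the setup cost into a bound involving $\delta_F$. The observation about re-using the Newton inverse across iterations (your last point, corresponding to \cite[Prop.~5.5]{GNP}) is also the right thing to invoke.

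There is, however, a bookkeeping gap in the way you bound the setup cost, and it is worth being precise because it is exactly the step the theorem's $n\delta_F^{1+\ep}$ term hinges on. You write that step~1 runs ``at precision $h_\phi$'' and that $h_\phi=O(\delta_F/n_F)$. Neither half is quite right. The number $h_\phi=\la_{r+1}$ is a slope measured in the normalization $v_r=e(\mu_r)\mu_r$; the actual $\m$-adic working precision needed for the coefficients is comparable to the quality $\delta_0(F)+h_\phi/e$ of the approximation $\phi\approx F$ (with $e=e(F)$), not to $h_\phi$ itself. In the other direction, from $h_\phi\le e\cdot\op{cap}(F)$ and $\op{cap}(F)=2\ind(F)/n_F+1-1/e$ you only get $h_\phi=O(e\,\delta_F/n_F+e)$, not $O(\delta_F/n_F)$ --- the factor of $e$ does not cancel the way your sketch suggests. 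The two errors compensate: the working $\m$-adic precision is $O(\op{cap}(F)+1)=O(\delta_F/n_F+1)$, so $\op{quotrem}(f,\phi)$ costs $\tilde O\bigl(nn_F(\delta_F/n_F+1)\bigr)=\tilde O(n\delta_F+nn_F)$, and the stray $nn_F$ is absorbed into the $nn_F\nu^{1+\ep}$ term since $\nu\ge1$. Likewise the inversion loop of step~6 costs $\tilde O(n_F^2\cdot\op{cap}(F))=\tilde O(n_F\delta_F)\le\tilde O(n\delta_F)$. So the final bound is correct, but to close the argument you must carefully distinguish the normalized slope $h_\phi$ from the $\m$-adic precision $\approx h_\phi/e$, and use both $e\le n_F$ and $2\ind(F)\le\delta_F$; a one-line appeal to ``$h_\phi=O(\delta_F/n_F)$'' does not follow from the identities you quote.
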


By applying the SFL routine to each leaf of an OM representation of $f$, we get an OM factorization $f\approx P_1\cdots P_t$ such that $P_j\equiv F_j\md{\m^\nu}$ for all $j$.

\begin{theorem}\label{factorization}
If $\F$ is a finite field, a combined application of the Montes and SFL algorithms, computes an OM factorization of $f$  with precision $\nu$, at the cost of
\[O\left(n^{2+\epsilon}+n^{1+\epsilon}(1+\delta)\log q+n^{1+\epsilon}\delta^{2+\epsilon}+n^2\nu^{1+\epsilon}\right)\]
operations in $\F$.
\end{theorem}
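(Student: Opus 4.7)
My plan is to combine the two complexity results already stated in the excerpt, namely the bound for the Montes algorithm given just before Example \ref{ex2} and the SFL bound of Theorem \ref{complexitySFL}, since the combined algorithm is literally their concatenation: first run Montes on $f$ to obtain an OM representation with $t$ leaves $\ty_1,\dots,\ty_t$, and then call SFL once on each leaf $\ty_j$ with target precision $\nu$ (or more precisely with target slope $H$ derived from $\nu$ via Lemma \ref{measures}) to sharpen the Montes approximation $P_j$ to $F_j$ until $P_j\equiv F_j\pmod{\m^\nu}$.

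The Montes stage contributes exactly the three terms $n^{2+\epsilon}+n^{1+\epsilon}(1+\delta)\log q+n^{1+\epsilon}\delta^{2+\epsilon}$. For the SFL stage, let $n_j=\deg F_j$ and $\delta_j=v(\dsc(F_j))$, and apply Theorem \ref{complexitySFL} to each leaf; the $j$-th call costs $O(nn_j\nu^{1+\epsilon}+n\delta_j^{1+\epsilon})$. Summing over $j$ and using $\sum_{j=1}^t n_j=n$ yields
$$
\sum_{j=1}^t O\!\left(nn_j\nu^{1+\epsilon}+n\delta_j^{1+\epsilon}\right)=O\!\left(n^2\nu^{1+\epsilon}+n\sum_{j=1}^t\delta_j^{1+\epsilon}\right).
$$

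The one non-trivial step is to bound $\sum_j\delta_j^{1+\epsilon}$ in terms of $\delta$. For this I would use the classical discriminant–resultant identity
$$
\dsc(f)=\prod_{j=1}^t\dsc(F_j)\;\cdot\prod_{1\le j<k\le t}\res(F_j,F_k)^2,
$$
valid because $f$ is squarefree. Since each $F_j\in\oo_v[x]$ and all the resultants lie in $\oo_v$, every term on the right has non-negative $v$-value, so $\delta=\sum_j\delta_j+2\sum_{j<k}v(\res(F_j,F_k))\ge\sum_j\delta_j$. Combined with the elementary superadditivity $\sum a_j^{1+\epsilon}\le(\sum a_j)^{1+\epsilon}$ for non-negative reals, this gives $\sum_j\delta_j^{1+\epsilon}\le\delta^{1+\epsilon}$, whence the SFL contribution is $O(n^2\nu^{1+\epsilon}+n\delta^{1+\epsilon})$. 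The term $n\delta^{1+\epsilon}$ is absorbed by the Montes term $n^{1+\epsilon}\delta^{2+\epsilon}$, and adding everything up yields precisely the announced bound.

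I do not foresee a genuine obstacle here; the argument is essentially an additive aggregation. The only care point is the discriminant estimate $\sum_j\delta_j\le\delta$, which rests on the squarefreeness hypothesis (so that the resultants are non-zero and their valuations non-negative) and on the integrality of the factors $F_j$ in $\oo_v[x]$ guaranteed by Lemma \ref{irredKv}.
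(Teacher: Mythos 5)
Your proposal is correct, and since the paper states Theorem~\ref{factorization} without spelling out a proof (the complexity results are cited from \cite{BNS} and \cite{GNP}), what you have done is a sound reconstruction of the intended argument: run Montes to get an OM representation, then one SFL call per leaf, and aggregate. The aggregation $\sum_j n n_j \nu^{1+\epsilon}=n^2\nu^{1+\epsilon}$ using $\sum_j n_j=n$ is immediate. The step you flagged as the crux is indeed the crux: bounding $\sum_j\delta_j^{1+\epsilon}$ by $\delta^{1+\epsilon}$ via $\dsc(f)=\prod_j\dsc(F_j)\prod_{j<k}\res(F_j,F_k)^2$, the non-negativity of all the $v$-values (which follows from Lemma~\ref{irredKv} and squarefreeness of $f$), and the superadditivity inequality $\sum a_j^{1+\epsilon}\le(\sum a_j)^{1+\epsilon}$ for $\epsilon\ge0$. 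The absorption of $n\delta^{1+\epsilon}$ into $n^{1+\epsilon}\delta^{2+\epsilon}$ is also fine, since for $\delta\ge1$ one has $n\delta^{1+\epsilon}\le n^{1+\epsilon}\delta^{2+\epsilon}$, and the term vanishes when $\delta=0$. Nothing is missing.
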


\begin{example}\label{ex3}
Recall the OM factorization $f\approx \phi\phi^*$ of Example \ref{ex2}. A single iteration of the main loop of SFL for each approximation of the true factors $f=FF^*$ yields the following improvements:
$$
\begin{array}{rl}
\Phi=&\!\!\!x^4\! -\! 869643860553342248938373118x^3 + 895292343076575293699260420x^2\\
& - 358277240246400736326320124x -
    615563580557575482075250676\\ \\
\Phi^*=&\!\!\! x^8\! -\! 368296178732038025960751104x^7 - 158699985612499241777758200x^6\\
& + 440432535828627390937956368x^5 -  70934084478318519720607720x^4\\
& + 468084806048993171281543264x^3 + 345452998984777616876109920x^2\\
&-
    244862856588991367554793344x - 417188598541852473806553072.
\end{array}
$$
The qualities of these new approximations are:
$$
\mu_{\infty,F}(\Phi)=85, \qquad \mu_{\infty,F^*}(\Phi^*)=88.
$$
By Lemma \ref{measures}, the precision is at least $83$ in both cases. 
\end{example}

\subsection{Computation of the pseudo-valuation $\mu_{\infty,F}$}\label{subsecValue}
Let $F$ be a prime factor in $\oo_v[x]$ of an irreducible polynomial $f\in\oo[x]$. Let $\t\in\kb$ be a root of $F$.

We present an algorithm for the computation of $\mu_{\infty,F}(g)=v(g(\t))$ for a given polynomial $g\in\oo[x]$. The basic idea is that this value should be deduced from a comparison of the genomic trees of $F$ and $g$. More precisely, if we find an inductive valuation $\mu$ and a key polynomial $\phi$ for $\mu$ such that $\phi\mmu F$ and $\phi\nmid_\mu g$, then
$\mu_{\infty,F}(g)=\mu(g)$ by Theorem \ref{vgt}. From a computational perspective this amounts to finding a type $\ty$ such that $\ty\mid F$, $\ty\nmid g$, leading to $\mu_{\infty,F}(g)=\mu_\ty(g)$.

Let $r$ be the Okutsu depth of $F$ and let $\ty$ be the leaf of an OM representation of $f$, as in (\ref{leaf}), corresponding to $F$. Since $\ty\mid F$, we may check if $\op{Trunc}_i(\ty)\nmid g$ holds for some $0\le i\le r+1$, leading to $v(g(\t))=\mu_i(g)$. This fails if $\ty\mid g$ (for instance, if $\phi_{r+1}\mid g$). In this case, we improve the Okutsu approximation $\phi_{r+1}$ by applying one loop of the SFL routine; then, we replace the $(r+1)$-th level of $\ty$ by the data $(\phi,\la,\psi)$ determined by the new choice of $\phi_{r+1}$, and we test again if the new $\ty$ divides $g$.

If $\ty\mid g$, then $\phi_{r+1}$ is simultaneously close to a prime factor of $f$ and to a prime factor of $g$; hence, if $f$ and $g$ do not have a common prime factor in $\oo_v[x]$, after a finite number of steps the renewed type $\ty$ will not divide $g$. On the other hand, if $f$ and $g$ have a common prime factor, they must have a common irreducible factor in $\oo[x]$ too; since $f$ is irreducible, necessarily $f$ divides $g$ and $g(\t)=0$.
\medskip

\noindent{\bf $v$-VALUE ROUTINE}\vskip 1mm

\noindent INPUT:

$-$ A discrete valued field $(K,v)$ with valuation ring $\oo$.

$-$ A monic irreducible polynomial $f\in \oo[x]$.

$-$ A leaf $\ty$ of order $r+1$, as in (\ref{leaf}), of an OM representation of $f$.

$-$ A polynomial $g\in \oo[x]$.
\medskip

\st{1} $g\gets g\md f$

\st{2} IF $g = 0$ THEN  RETURN $\infty\ $ ELSE  $\ \nu \gets v_0(g),\quad g\gets g/\pi^\nu$

\st{3} IF $\psi_0\nmid \overline{g}$ THEN RETURN $\nu$

\st{4} FOR $i=1$ to $r+1$ DO

\stst{} (a) \ Compute $N_i^-(g)$ and the left end point $(s,u)$ of $S_{\la_i}(g)$ (section \ref{subsecNewton})

\stst{} (b) \ Compute $R_i(g)$

\stst{} (c) \ IF $\psi_i\nmid R_i(g)$ then RETURN $\nu+(u+s\la_i)/e_1\cdots e_{i-1}$

\st{5} WHILE $\psi_{r+1}\mid R_{r+1}(g)$ DO

\stst{} (a) \ Apply one loop of SFL to improve $\phi_{r+1}$

\stst{} (b) \ Set $\la_{r+1}$ as the largest slope in absolute value of the new $N_{r+1}^-(f)$

\stst{} (c) \ $\psi_{r+1}\gets R_{r+1}(f)$

\stst{} (d) \ Compute the new $N_{r+1}^-(g)$ and the left end point $(s,u)$ of $S_{\la_{r+1}}(g)$

\stst{} (e) \ Compute $R_{r+1}(g)$

\st{6} RETURN $\nu+(u+s\la_{r+1})/e(F)$\medskip

\noindent OUTPUT:\vskip .1mm

$-$ $v(g(\t))$, where $\t\in\kb$ is a root of the prime factor $F$ of $f$ attached to $\ty$.
\medskip

In step {\bf 4}(c) we use $v(g(\t))=\mu_i(g)=(u+s\la_i)/e_1\cdots e_{i-1}$, by Lemma \ref{muprim}.

Let $\alpha\in \overline{K}$ be a root of $f$ and $L=K(\alpha)$ the finite extension of $K$ generated by $\alpha$. Every $\beta\in L$ is of the form $\beta=g(\alpha)$ for some $g\in K[x]$; hence, the $v$-routine computes
$v(\iota(\beta))$, where $\iota\colon L\to \kb$ is the embedding determined by $\alpha\mapsto \t$.

Suppose $K$ is a number field and $v=v_\p$ is the $\p$-adic valuation attached to a prime ideal $\p$ of $K$. Then, the prime factor $F$ of $f$ corresponds to a prime ideal $\mathfrak{P}$ of $L$ dividing $\p$. As explained in \cite{newapp}, the $v$-routine may be used to compute the $\mathfrak{P}$-adic valuation mapping $v_\mathfrak{P}\colon L\to \Z$ by the formula
$$
v_\mathfrak{P}(\beta)=e(\mathfrak{P}/\p)v(\iota(\beta))=e(F)v(\iota(\beta)).
$$

Similarly, the $v$-routine may be used to compute the order of a function at a given point
on an algebraic curve.

\subsection{Index of a square-free polynomial}{Index of a square-free polynomial}
Let $F\in\P$ be a prime polynomial and $\t\in\kb$ a root of $F$. The Dedekind domain $\oo_F$ is a free $\oo_v$-module of rank $n_F= \deg F$. Since $ \oo_v[\t]\subset \oo_F$ is also a free $\oo_v$-module of the same rank, the quotient $\oo_F/\oo_v[\t]$ is an $\oo_v$-module of finite length. This length is the \emph{index} of $F$ and we denote it by $\ind(F)$. 

Now, let $f$ be a square-free polynomial in $\oo[x]$ with prime factorization $f=F_1\cdots F_t$
in $\oo_v[x]$. We define the \emph{index} of $f$ as
\begin{equation}\label{index}
\ind(f):=\sum_{1\le i\le t}\ind(F_i)+\sum_{0\le i<j\le t}v(\res(F_i,F_j)).
\end{equation}

By using a formula of \cite[Thm. 4.18]{HN}, in \cite{algorithm} it was shown how to compute  $\ind(f)$ as the accumulation of the number of points of integer coordinates lying below all Newton polygons that occur along the flow of the Montes algorithm.

Alternatively, we may compute $\ind(f)$ by a closed formula in terms of the data stored in a OM-representation of $f$. By either method, we obtain the value of $\ind(f)$ as a by-product of the Montes algorithm at a negligible cost.

\begin{proposition}\label{indres}
Let $F,G\in\P$ be two prime factors of $f$ and let $i=i(F,G)$ be their index of coincidence. Then,
$$
%\as{1.2}
\begin{array}{l}
 \ind(F)=\deg F\left(\op{cap}(F)-1+e(F)^{-1}\right)/2,\\
v(\res(F,G))=\deg F\deg G\left(V_i+\min\{\la_F^G,\la_G^F\}\right)/e_1\cdots e_{i-1}m_i.
\end{array}
$$
\end{proposition}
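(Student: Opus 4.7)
The formula in part~(a) is the third identity of~(\ref{Okinvariants}), quoted from~\cite{Ndiff}, so it requires no separate proof. For part~(b), the plan is to use
\[
v(\res(F,G)) = \deg F\cdot v(G(\t)) = \deg F\cdot \mu_{\infty,F}(G)
\]
for any root $\t\in\kb$ of $F$ (valid by Galois invariance and the irreducibility of $F$ over $K_v$), and then to apply Theorem~\ref{vgt} at the smallest inductive valuation in the MacLane chain of $F$ at which $F$ and $G$ can be separated. When $i=i(F,G)=0$, the reductions $\overline F,\overline G$ are powers of distinct irreducibles in $\F[y]$, so $v(\alpha-\beta)=0$ for every pair of roots $\alpha$ of $F$ and $\beta$ of $G$, and with the conventions $V_0=\la_0=0$ both sides of the stated identity equal $0$.

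Assume $i\ge 1$. Since $\ty_F$ and $\ty_G$ share truncation at level $i-1$, Proposition~\ref{unicity} lets me pick a common level-$i$ key polynomial $\phi$ of degree $m_i$ for both types. By Theorem~\ref{fundamental} the polygons $N_{\mu_{i-1},\phi}(F)$ and $N_{\mu_{i-1},\phi}(G)$ are one-sided of slopes $-\nu_i^F$ and $-\nu_i^G$ respectively, with $\nu_i^F=\la_i^F/(e_1\cdots e_{i-1})$ and analogously for $G$. Exploiting the symmetry $v(\res(F,G))=v(\res(G,F))$, I would assume $\nu_i^F\le\nu_i^G$. Setting $\mu:=[\mu_{i-1};\phi,\nu_i^F]$, which lies in the MacLane chain of $F$ (so $\mu\le\mu_{\infty,F}$), Theorem~\ref{vgt} furnishes a key polynomial $\phi^*\in\kpm$ with $\phi^*\mmu F$.

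The crucial step is to verify $\phi^*\nmid_\mu G$. By Corollary~\ref{Rprops}(4) and Theorem~\ref{Delta}, $\rr_\mu(G)$ is generated in $\Delta(\mu)\simeq\F_i[y]$ by $y_i^{\lceil s_i(G)/e_i\rceil}R_i^F(G)(y_i)$, while $\rr_\mu(\phi^*)=\psi_i^F(y_i)\Delta(\mu)$. If $\nu_i^F<\nu_i^G$, the $\la_i^F$-component of $N_i^F(G)$ collapses to a single point, so $R_i^F(G)\in\F_i^*$ is a nonzero constant; if $\nu_i^F=\nu_i^G$, then $R_i^F=R_i^G$, and applying Corollary~\ref{Rprops}(1)--(3) to $G\sim_\mu(\phi_{i+1}^G)^{\ell'}$ with $\ell'=\deg G/\deg\phi_{i+1}^G$ produces $R_i^F(G)=(\psi_i^G)^{\ell'}$, a power of $\psi_i^G\ne\psi_i^F$. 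Since $\psi_i^F\ne y$ in both cases, $\psi_i^F$ does not divide $y_i^{\lceil s_i(G)/e_i\rceil}R_i^F(G)$ in $\F_i[y]$, giving $\phi^*\nmid_\mu G$; Theorem~\ref{vgt} then yields $\mu_{\infty,F}(G)=\mu(G)$.

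To finish, Lemma~\ref{muprim} identifies $\mu(G)$ with the $y$-intercept of the line of slope $-\nu_i^F$ touching $N_{\mu_{i-1},\phi}(G)$ from below; since $\nu_i^F\le\nu_i^G$, this line meets the polygon only at its right endpoint $(\ell_G,\ell_G w_i)$, where $\ell_G=\deg G/m_i$. Hence $\mu(G)=\ell_G(w_i+\nu_i^F)$, and multiplying by $\deg F$ while substituting $w_i=V_i/(e_1\cdots e_{i-1})$ and $\nu_i^F=\la_i^F/(e_1\cdots e_{i-1})$ yields
\[
v(\res(F,G))=\frac{\deg F\deg G\,(V_i+\la_i^F)}{e_1\cdots e_{i-1}\,m_i},
\]
with $\la_i^F=\min\{\la_i^F,\la_i^G\}=\min\{\la_F^G,\la_G^F\}$ under the standing assumption $\nu_i^F\le\nu_i^G$. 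The main obstacle is the residual-polynomial verification $\phi^*\nmid_\mu G$, which hinges on the case distinction between distinct and coincident level-$i$ slopes, together with the fact that inequivalent level-$i$ data of $\ty_F$ and $\ty_G$ produce coprime irreducible generators in $\F_i[y]$.
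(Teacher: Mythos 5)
Part~(a) is correct and unproblematic: the $\ind(F)$ formula is precisely the third identity in~(\ref{Okinvariants}), and the paper's own proof simply cites~\cite[Prop.\ 3.5]{GNP} for it.

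For part~(b) your overall plan---pass from $v(\res(F,G))$ to $\mu_{\infty,F}(G)$, localize at the first level where $F$ and $G$ are separated, and finish with Theorem~\ref{vgt} and Lemma~\ref{muprim}---is the right one, and the paper itself merely cites~\cite[Prop.\ 4.7]{newapp}. But there is a genuine gap in the way you identify the slopes. The quantities $\la_F^G$, $\la_G^F$ in the statement are \emph{hidden slopes}: they are the slopes of $N_{v_{i-1},\phi}(F)$ and $N_{v_{i-1},\phi}(G)$ computed with respect to the key polynomial $\phi$ that the Montes algorithm carries at the WHILE iteration where $F$ and $G$ first split, and the paper warns that ``they cannot be read in the genomic trees if one of the branches of $F$ or $G$ detected in that WHILE loop led to a refinement step.'' You instead treat them as the intrinsic level-$i$ slopes $\la_i^F$, $\la_i^G$ of the optimal types $\ty_F$, $\ty_G$ and conclude with the unsupported equality $\min\{\la_i^F,\la_i^G\}=\min\{\la_F^G,\la_G^F\}$. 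Two concrete problems arise. First, Proposition~\ref{unicity} compares two optimal chains of the \emph{same} inductive valuation, whereas $\mu_F\ne\mu_G$; there need not exist a single $\phi$ of degree $m_i$ that is simultaneously a valid level-$i$ key polynomial for both optimal chains, so the slopes of $N_{v_{i-1},\phi}(F)$ and $N_{v_{i-1},\phi}(G)$ for a common $\phi$ depend on $\phi$ and are not automatically $-\nu_i^F$, $-\nu_i^G$. Second, and more to the point, if the branch separating $F$ from $G$ leads to a refinement step, the hidden slope of (say) $F$ is \emph{strictly smaller} than its intrinsic $\la_i^F$, so $\min\{\la_i^F,\la_i^G\}>\min\{\la_F^G,\la_G^F\}$ and your formula overestimates $v(\res(F,G))$. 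The correct argument must be run with the separating key polynomial that the algorithm actually uses: with that $\phi$ your Newton-polygon and residual-polynomial verifications go through essentially verbatim and deliver the formula in terms of the hidden slopes, which is the content of~\cite[Prop.\ 4.7]{newapp}.
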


The capacity $\op{cap}(F)$ was given in (\ref{Okinvariants}) and the index of coincidence $i(F,G)$ was defined in section \ref{subsecGenomic}. The types $\ty_F$ and $\ty_G$ coincide at the levels $0,1,\dots,i-1$ and the data $e_1,\dots,e_{i-1},m_i,V_i$ are common to both types. The rational numbers $\la_F^G$, $\la_G^F$ are the {\em hidden slopes} of the pair $F,G$ and they are stored as secondary data of the types of the OM representation of $f$. They are obtained in the first iteration of the WHILE loop of the Montes algorithm where the prime factors $F$, $G$ are separated by the branching process. Their name reflects the fact that they cannot be read in the genomic trees if one of the branches of $F$ or $G$ detected in that WHILE loop led to a refinement step.

The formula for $\ind(F)$ was proved in \cite[Prop. 3.5]{GNP}.
Since $F$ and $G$ are irreducible in $\oo_v[x]$, we have $v(\res(F,G))=\deg Fv(G(\t))$, where $\t$ is a root of $F$. Hence, the formula for $v(\res(F,G))$ may be derived from  \cite[Prop. 4.7]{newapp}.

\begin{example}\label{ex4}
Let  $f=FF^*$ be the  polynomial of Example \ref{ex2}. We have already seen that $i(F,F^*)=2$, $\ind(F)=3$ and $\ind(F^*)=20$.
Along the application of the Montes algorithm to $f\in \Q[x]$ with respect to the $2$-adic valuation, it occurs no refinement. Hence, the hidden slopes are $\la_F^{F^*}=\la_2=1$, $\la_{F^*}^F=\la^*_2=3/2$, so that $v(\res(F,F^*))=24$ by Proposition \ref{indres}. After (\ref{index}), we obtain $\ind(f)=47$. 
\end{example}

If $f$ is irreducible and separable in $\oo[x]$, then the non-negative integer $\ind(f)$ defined in (\ref{index}) coincides with the usual concept of index. In fact, let $\alpha\in\overline{K}$ be a root of $f$ and let $B$ be the integral closure of $\oo$ in the finite extension $L=K(\alpha)$. Since the extension $L/K$ is separable and $\oo$ is a PID, the Dedekind domain $B$ is a free $\oo$-module of rank $n=\deg f$. From the well-known identities relating indices and discriminants \cite[III, \S2-4]{serre}
we deduce that:
$$
\ind(f)=v\left(\left(B\colon\oo[\alpha]\right)\right)=\op{length}_\oo(B/\oo[\alpha]),
$$
where the \emph{index} $\left(B\colon\oo[\alpha]\right)$ is the $\oo$-ideal defined in
\cite[I, \S5]{serre}.

\subsection{Computation of discriminants and resultants}
Let $g,h\in\oo[x]$ be two monic polynomials having no common prime factors. In \cite[Thm. 4.10]{HN} a formula for $v(\res(g,h))$ was obtained in terms of the intersection of two OM re\-presentations of $g$ and $h$. In \cite{Ndiff} a concrete algorithm was designed to carry out this computation. This yields a fast computation of  $v(\res(g,h))$ and/or $v(\dsc(g))$ in cases where the naive computation of $\res(g,h)$ or $\dsc(g)$ is unfeasible because the polynomials have large degree or large coefficients.  

\subsection{Computation of $v$-integral bases}
Suppose that $f\in\oo[x]$ is monic and irreducible. Let $\alpha\in \overline{K}$ be a root of $f$ and $L=K(\alpha)$ the finite extension of $K$ generated by $\alpha$.
The integral closure $B$ of $\oo$ in $L$ is a Dedekind domain.

We suppose that $B$ is finitely generated as an $\oo$-module. This condition holds under very natural assumptions; for instance, if $L/K$ is separable, or $(K,v)$ is complete, or $\oo$ is a finitely generated algebra over a field \cite[I, \S4]{serre}.

Under this assumption, $B$ is a free $\oo$-module of rank $n=\deg f$. A \emph{$v$-integral basis} of $L$ is by definition an $\oo$-basis of $B$.

The \emph{method of the quotients} is valid in this general setting and it computes a $v$-integral basis of $L$ as a by-product of a standard application of the Montes algorithm to $f$ \cite{bases}. Whenever a $\phi$-expansion of $f$ is computed along the flow of the algorithm, the quotients of the successive divisions with remainder are stored. The members of the $v$-integral basis are computed as $g_i(\alpha)\pi^{-d_i}$, for $0\le i<n$, where $g_i\in\oo[x]$ are an adequate product of the stored quotients, and the non-negative integers $d_i$ are determined by combinatorial data of the Newton polygons considered by the algorithm.

The complexity of the method is the cost of the Montes algorithm plus $O(n)$ multiplications in $\oo[\alpha]$. This method is extremely fast in practice.

\subsection{OM representations of prime ideals in a number field}\label{subsecNF}
Suppose $K=\Q$ and $v=v_p$ is the $p$-adic valuation attached to a prime number $p$, so that $\oo$ is the local ring $\Z_{(p)}$.

Let $f\in\Z[x]$ be a monic irreducible polynomial, $\alpha\in\overline{\Q}$ a root of $f$ and $L=\Q(\alpha)$ the number field determined by $f$. Denote by $\Z_L$ the ring of integers of $L$.

After a celebrated theorem by Hensel, the prime ideals of $L$ lying over $p$ are in 1-1 correspondence with the prime factors of $f$ in $\Z_p[x]$. From a computational perspective, the prime ideals may be represented in a computer as the OM representations of these prime factors of $f$, which may be computed by a single application of the Montes algorithm  with input $f,v_p$. The main arithmetic tasks concerning prime ideals may be easily performed by using the MacLane-Okutsu invariants and the operators encoded by these OM representations.

In \cite{newapp} it was presented a computational approach to ideal theory in number fields based on this principle. This approach has the advantage that many arithmetic tasks may be carried out avoiding the computation of the maximal order of $L$ and the factorization of the discriminant of the defining polynomial $f$. The most relevant ones are:

\begin{enumerate}
\item Compute the $\p$-adic valuation $v_\p\colon L^*\to \Z$, for any prime ideal $\p$ of $L$.
\item Obtain the prime ideal decomposition of a fractional ideal.
\item Compute a two-elements representation of a fractional ideal.
\item Add, multiply and intersect fractional ideals.
\item Compute the reduction maps $\Z_L\to \Z_L/\p^a$.
\item Solve Chinese remainders problems.
\item Compute a $p$-integral basis of $L$.
\end{enumerate}

The genetic-based routines designed to perform these tasks are much faster than the classical ones, especially for those number fields defined by polynomials with a large genomic tree at some prime.

The Magma package \emph{+Ideals.m} based on this approach may be downloaded from
 {\tt http://www-ma4.upc.edu/$\sim$guardia/+Ideals.html}.

\subsection{OM representations of places in a function field}\label{subsecFF}
Suppose that $K=k(t)$ is the function field of the projective line $\P^1$ over a field $k$.
An irreducible polynomial $f(t,x)\in k[t,x]$, separable over $k[t]$, determines a unique smooth projective curve $C$ as the normalization of the projective closure of the affine curve $f(t,x)=0$. The function field of $C$ is $L=k(t,x)=K[x]/(f)$.

Let $A=k[t]$ and denote by $A_\infty=k[t^{-1}]$ the local ring at the point at infinity of $\P^1$. Let $B$ and $B_\infty$ be the integral closures in $L$ of $A$ and $A_\infty$, respectively. As indicated in \cite{hess}, a divisor of $C$ may be identified to a  pair $D=(I, I_\infty)$ of fractional ideals of $B$ and $B_\infty$ respectively. The Riemann-Roch space attached to the divisor is simply $L(D)= I\cap I_\infty$.

In this representation, a prime divisor corresponds to a prime ideal in either $B$ or $B_\infty$, and it may be represented by a prime factor of $f(t,x)$ over the completion of $A$ or $A_\infty$ at a place (finite or infinite) of $K$.

J.-D. Bauch has developed a Magma package \emph{+Divisors.m} where divisors are manipulated as such pairs $D=(I, I_\infty)$, and ideals are handled  as OM representations. 
This approach leads to fast OM routines to compute the genus of a curve \cite{B} and the divisor of a function. Also, it yields an acceleration of the classical methods to compute $k$-bases of the Riemann-Roch spaces of divisors defined over $k$.

\subsection{Construction of field extensions with prescribed ramification}\label{subsecPrescribed}
The  procedure for the  construction of types described in section \ref{subsecConstruct} can be used to generate number fields $L$ with prescribed decomposition of several rational primes. For every prescribed factorization $p\Z_L=\p_1\dots\p_g$ one constructs proper types $\ty_1,\dots,\ty_g$; the product of  the $\phi$-polynomials in the last level of all these types plus a high enough power of $p$ will generate a number field where $p$ has the desired factorization. Actually, we can even prescribe different values for the Okutsu invariants of the prime factors of the generating polynomial corresponding to the different prime ideals.  In order to combine prescribed data for different prime numbers, one has only to apply the Chinese remainder theorem. The same idea works for function fields.

We have used these ideas in \cite{GNP} to design a bank of benchmark polynomials for the analysis of  algorithms on number fields or function fields.

%\bibliographystyle{amsplain}

%    Text of article.

%    Bibliographies can be prepared with BibTeX using amsplain,
%    amsalpha, or (for "historical" overviews) natbib style.
%    Insert the bibliography data here.

\end{document}